\markboth{\thepage}{\thepage}
\begin{document}

\def\bA{\mathbb A}
\def\bR{\mathbb R}
\def\bZ{\mathbb Z}
\def\bQ{\mathbb Q}
\def\bC{\mathbb C}
\def\bN{\mathbb N}
\def\cO{\mathcal O}

\newcommand\norm[1]{\left\|#1\right\|}
\newcommand\biggnorm[1]{\biggl\|#1\biggr\|}
\newcommand\abs[1]{\left|#1\right|}
\newcommand\bigabs[1]{\bigl|#1\bigr|}
\newcommand\biggabs[1]{\biggl|#1\biggr|}
\newcommand\inn[1]{\left\langle #1 \right\rangle}
\newcommand\set[1]{\left\{{#1}\right\}}
\renewcommand\Bar[1]{\overline{#1}}

\input epsf

\def\bcw{\mathbin{\bigcirc\mkern-15mu\wedge}}

\def\r#1{{\mathop{#1}\limits^\circ}}
\def\ring{\r}

\def\nint{\mathbin{\int\mkern-18mu\diagup \;} }

\newcommand{\namelistlabel}[1] {\mbox{#1}\hfil}
\newenvironment{namelist}[1]{%
\begin{list}{}
{\let\makelabel\namelistlabel
\settowidth{\labelwidth}{#1}
\setlength{\leftmargin}{1.1\labelwidth}}
}{%
\end{list}}

\newtheorem{lem}{Lemma}
\newtheorem{thm}{Theorem}

\newtheorem{Lem}{Lemma}[section]
\newtheorem{Thm}{Theorem}[section]

\newtheorem{cor}{Corollary}[section] 

\newtheorem{Cor}[Lem]{Corollary}

\newtheorem{Prop}[Lem]{Proposition}

\newtheorem{prop}{Proposition}[section]

\newtheorem{Pro}[Thm]{Proposition}

\newtheorem{co}[Lem]{Corollary}

\newtheorem{corr}[Thm]{Corollary}

\newtheorem{C}[Lem]{Claim}

\newtheorem{Def}{Definition}[section]

\newtheorem{remark}{Remark}[section]

\pagestyle{myheadings}
\markboth{\underline{\sc Prime and Almost Prime Integral Points on
Principal Homogeneous Spaces }}
{\underline{\sc Prime and Almost Prime Integral Points on Principal
Homogeneous Spaces }}

\parskip=10pt

\baselineskip=13pt

\thispagestyle{empty}
\begin{center}
{\large \bf \sf Prime and Almost Prime Integral Points on Principal
Homogeneous Spaces }
\end{center}

\begin{center}
{\large \bf \sf by}
\end{center}

\begin{center}
\begin{tabular}{llcll}
{\Large \sf Amos Nevo} && and &&  {\Large \sf Peter Sarnak} \\ 

{\large Technion, Haifa} &&& & {\large Princeton University} \\ 
&&&& {\large \& Institute for Advanced Study} 
\end{tabular}

\vspace{.15in}
\hrule
\end{center}

\vspace{.10in}
\begin{center}
November 9, 2008. 
\end{center}

%

\vspace{.5in}

\begin{abstract}
We develop the affine sieve in the context of orbits of congruence
subgroups of semi-simple groups acting linearly on affine space.  In
particular we give effective bounds for the saturation numbers for
points on such orbits at which the value of a given polynomial has few
prime factors.  In many cases these bounds are of the same quality as
what is known in the classical case of a polynomial in one variable and
the orbit is the integers.  When the orbit is the set of integral
matrices of a fixed determinant we obtain a sharp result for the saturation number,  and thus establish the Zariski density of matrices all of whose entries are prime numbers.  Among
the key tools used are explicit approximations to the generalized
Ramanujan conjectures for such groups and sharp and uniform counting of
points on such orbits when ordered by various norms.
\end{abstract}

\vspace{.5in}

\underline{\sf Keywords} : Affine sieve, Semisimple groups, Arithmetic lattices, Lattice points, Prime numbers, Principal homogeneous spaces, Spectral gap, Mean ergodic theorem. 

\underline{\sf Acknowledgements :}

\noindent A. N. was supported by the Institute for Advanced Study, Princeton and ISF grant 975/05. 

\noindent P. S.  was supported by an NSF grant and BSF grant 2006254.

\newpage
\noindent
\underline{\large \sf Section 1.}

\medskip
\noindent
1.A: \ \underline{\sf Prime and almost prime integral matrices of a given
determinant.}

\medskip
For integers $n \geq 2$ and $m \ne 0$ let $V_{m, n } ( \bZ )$ or
$\cO^{m,n}$ denote the set of $n \times n$ integral matrices of
determinant equal to $m$.  We study the points $x \in \cO^{m , n}$
for which $f ( x ) = \mathop{\Pi}\limits_{1 \, \leq \, i \, \leq \, j \, 
\leq \, n}\hspace{-.20in} x_{ij}$, 
or more generally any $f \in \bQ [ x_{ij}]$ which is
integral on $\cO^{m , n}$ has few (or fewest possible) prime factors.
In general, given a set $\cO$ of integer points in $\mbox{Mat}_{n\times n}$ and $d \ge 1$ let $\cO_d$
denote the reduction of $\cO$ in $\mbox{Mat}_{n\times n}( \bZ / d \bZ)$.  
We say $f$ is weakly primitive for $\cO$ if $\mbox{gcd  }\{ f ( x ): x \in \cO \}$ is $1$.  If $f$
is not weakly primitive then $\frac{1}{N} f$ is, where $N = \mbox{gcd } f ( \cO)$ and
we can represent any weakly primitive $f$ as $\frac{1}{N} g$ with $g \in \bZ [
x_{ij}]$ and $N =\mbox{gcd } g ( \cO)$.  

Define the saturation number
$r_0$ of the pair $( \cO^{m , n}, f )$ to be the least $r$ such that the
set of $x \in \cO^{m , n}$ for which $f ( x )$ has at most $r$ prime
factors, is Zariski dense in the affine variety $V_{m , n} = \{ x \in \mbox{Mat}_{n \times n}: \det x = m \} = Z c \ell ( \cO^{m , n})$ 
(we denote by $Zc\ell$ the operation of taking Zariski closure in affine
space, also see [Sa2] for a further discussion and motivation for this
set up).  It turns out that $r_0$ is finite though this is by no means
obvious.  The coordinate ring $\bQ [ x_{ij}]/(\det (x_{ij} ) - m )$ is a
unique factorization domain [San] and we factor $f$ into $t=t(f)$ irreducibles
$f_1 f_2 \ldots f_t$ in this ring.  We assume that the $f_{j}$'s are
distinct and for simplicity that they are irreducible in 
$\bar{\bQ} [ x_{ij}] / ( \det x_{ij} - m )$.  
It is clear that $r_0 ( \cO^{m , n} , f)
\geq t$ and if $f$ and the $f_j$'s have integer coefficients, that $r_0$
$(\cO^{m , n }, f) = t$ {\sf iff} the set of $x \in \cO^{m , n}$ for
which $f_j ( x )$ are all prime, is Zariski dense in $V_{m , n}$.  The
general local to global conjectures in [B-G-S2] when applied to the pair
$(V_{m , n} ( \bZ ), f )$ assert that $r_0 ( V_{m , n} ( \bZ ) , f ) =
t$.  In the case that $f$ and $f_j$ are in $\bZ [ x_{ij}]$ we even 
expect a "prime number theorem" type of asymptotics as follows:

\medskip
Let $| \ \ |$ be any norm on the linear space $\mbox{Mat}_{n \times n} ( \bR )$
and for $T \geq 1$ set 

\begin{equation}
N_{m , n } ( T ) \, = \, | \{
x \in \cO^{m , n } : | x | \, \leq \, T \} | \, .
\end{equation}

\noindent
It is known [D-R-S], [Ma], [G-N1] that 

\begin{equation}
N_{m , n } ( T ) \, \sim \, c ( \cO^{m , n} ) \, T^{n^2 - n} \
\mbox{as} \ T \longrightarrow \infty \, .
\end{equation}

\noindent
Here $c$ is positive constant and is given as a product of local
densities associated with $\cO^{m , n}$.  Let

\begin{equation}
\pi_{m, n , f }  ( T ) \, = \, | \{
x \in \cO^{m , n } : \, | x | \, \leq \, T \ \mbox{and} \ f_j ( x ) \
\mbox{is prime for}   \  j \, = \, 1 , 2 , \ldots, t \} | \, .  
\end{equation}

\noindent
Our conjectured asymptotics for $\pi_{m.n.f}$ is then as follows : 

\begin{equation}
\pi_{m , n , f} ( T ) \, \sim \,
\frac{ c \, ( {m , n , f } ) \, N_{m , n } ( T )}
{(\log T)^{t(f)}} \
\mbox{as} \ T \longrightarrow \infty \,,
\end{equation}

\noindent
where for an orbit $\cO$ we set 

\begin{equation}
c ( \cO , f ) \, = \, c_\infty ( \cO , f ) \, \prod\limits_{p  \, <
\, \infty} \;
\left[
\left(
1 - \, 
\frac{| \cO^f_p |}{|\cO_p|} \right) \,
\left( 1 \, + \, \frac{t(f)}{p} \right) \right] \,,
\end{equation}

\noindent
and $\cO^f_p$ is the subset of 
$\cO_p$ at which
$f ( x ) = 0 \, ( {\sf mod} \, p)$ while the positive archimedian factor
$c_\infty ( \cO, f )$ is a bit more complicated to describe.
We will see that the product of local densities in (1.5) converges
absolutely and each factor is non-zero since we are assuming that $f$ is
primitive.

\medskip
The main tool that we develop in this paper is an affine linear sieve
for homogeneous spaces and as in the more familiar classical
one-variable sieve [H-R], our main results are upper bounds which are
sharp up to a multiplicative constant for $\pi_{\cO , f} ( T )$ and
lower bounds, which are also sharp up to a constant factor, for points
$x \in \cO$ for which $f$ has at most a fixed number of large prime
factors (``almost primes''). 

In particular,  for the set $ \cO^{m , n}$ of integral $n\times n$ matrices of determinant $m$  the upper bound is given by 
\begin{Thm}
Let $\cO^{m, n}$ be as above and $f \in \bZ [ x_{ij}]$ be weakly primitive with
$t(f)$ irreducible factors in $\bQ [ x_{i j}] / ( \det \, x_{ij} - m )$. Then

\[
{\pi}_{m , n , f} ( T ) \ll \, 
\frac{N_{m , n}
( T )}{( \log T )^{t(f)}} \, ,
\]

\noindent
the implied constant depending explicitly on $m , n$ and $f$.

\medskip
\end{Thm}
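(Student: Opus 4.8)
The approach is the standard combinatorial sieve (Brun or Selberg) applied to the sequence indexed by the orbit $\cO^{m,n}$ ordered by the norm $|\cdot|$, following the template of the classical one-variable sieve [H-R] but with the key inputs — the counting of lattice points and the equidistribution in congruence classes — supplied by the spectral theory of the arithmetic group acting on $V_{m,n}$. Concretely, for $d\geq 1$ squarefree let
\[
\cA_d(T)=\bigl|\{x\in\cO^{m,n}:\ |x|\leq T,\ d\mid f(x)\}\bigr|,
\]
and let $\cA(T)=N_{m,n}(T)$. The first step is to establish a \emph{local density} formula of the form
\[
\cA_d(T)=\beta(d)\,\cA(T)+r_d(T),\qquad \beta(d)=\prod_{p\mid d}\frac{|\cO^f_p|}{|\cO_p|},
\]
with $\beta$ multiplicative, together with a satisfactory bound on the remainder $r_d(T)$ on average over $d\leq T^{\delta}$ for some fixed $\delta>0$. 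The multiplicativity of $\beta$ reduces to the Chinese Remainder Theorem on $\mathrm{Mat}_{n\times n}(\bZ/d\bZ)$, while the main term comes from the equidistribution of the orbit in $\mathrm{Mat}_{n\times n}(\bZ/d\bZ)$, which in turn rests on the sharp and uniform lattice-point count in (1.2) applied in each fixed congruence class — this is exactly where the effective spectral gap (the explicit bounds towards Ramanujan) enters, since it is what makes the count uniform in the modulus $d$ and controls $r_d(T)$.

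The second step is the sieve axiom check: one must verify that $\beta(p)=|\cO^f_p|/|\cO_p|$ satisfies the dimension (or density) condition
\[
\prod_{w\leq p<z}\bigl(1-\beta(p)\bigr)^{-1}\asymp\Bigl(\frac{\log z}{\log w}\Bigr)^{\kappa}
\]
with sieve dimension $\kappa=t(f)$. Since $f$ factors into $t$ distinct geometrically irreducible factors $f_1,\dots,f_t$ in the coordinate ring $\bQ[x_{ij}]/(\det-m)$, the Lang–Weil estimates (applied to each hypersurface $f_j=0$ inside the smooth affine variety $V_{m,n}$, of dimension $n^2-n$) give $|\cO^f_p|/|\cO_p|=t/p+O(p^{-3/2})$ for all but finitely many $p$, which is precisely a linear sieve of dimension $t$; the bad primes and the weak-primitivity hypothesis guarantee each local factor is strictly less than $1$ and the product in (1.5) converges.

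The third step is to feed these into the fundamental lemma / upper-bound sieve (Selberg's $\Lambda^2$ sieve is cleanest for the upper bound), which yields
\[
\bigl|\{x\in\cO^{m,n}:\ |x|\leq T,\ p\mid f(x)\Rightarrow p>z\}\bigr|\ \ll\ \frac{\cA(T)}{(\log z)^{t(f)}}
\]
for $z=T^{\eta}$ with $\eta$ small enough that the accumulated remainder $\sum_{d\leq z^2}|r_d(T)|$ is $o(\cA(T)(\log T)^{-t(f)})$. Since $f_j(x)$ prime for all $j$ forces $f(x)=\prod f_j(x)$ to have all prime factors exceeding some power of $T$ once $|x|\leq T$ (each $|f_j(x)|\leq T^{O(1)}$), the quantity $\pi_{m,n,f}(T)$ is dominated by the sifted set above, and $(\log z)^{t(f)}\asymp(\log T)^{t(f)}$, giving the claimed bound with the constant depending explicitly on $m,n,f$ through the bad primes, the Lang–Weil constants, and the spectral gap.

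The main obstacle is the second part of Step 1: obtaining the remainder estimate $r_d(T)$ uniformly in $d$ up to a genuine power of $T$. Unlike the classical case $\cO=\bZ$, where counting integers in an arithmetic progression is trivial, here one needs the counting asymptotic (1.2) to hold for each coset of the congruence subgroup with an error term that is \emph{power-saving in $T$ and polynomially controlled in $d$}; this is precisely what the quantitative mean ergodic theorem combined with the explicit approximation to the generalized Ramanujan conjecture delivers, and controlling the dependence on $d$ (which affects the volume of the relevant fundamental domains and the spectral gap of the congruence quotient via property $(\tau)$) is the technical heart of the argument.
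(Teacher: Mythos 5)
Your overall plan is the paper's: set up the sifted sequence on the orbit, obtain the local densities $\beta(p)=|\cO_p^f|/|\cO_p|=t(f)/p+O(p^{-3/2})$ from Lang--Weil on the irreducible hypersurfaces $f_j=0$ inside $V_{m,n}$, verify the sieve dimension $t(f)$, and use the spectrally-derived uniform lattice-point count with a power-saving error (uniform over cosets of congruence subgroups, which is the technical heart) to establish a positive level of distribution and run the upper-bound sieve. Two remarks, one cosmetic and one substantive.

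The cosmetic one: multiplicativity of $\beta(d)$ on the orbit does not reduce to the Chinese Remainder Theorem alone; CRT gives the splitting of $\mathrm{Mat}_n(\bZ/d\bZ)$, but to know $\cO_{d_1 d_2}\cong\cO_{d_1}\times\cO_{d_2}$ one needs strong approximation for the simply connected group $SL_n$ (as in the paper's (4.3)--(4.6)). Also, the paper sieves on one $SL_n(\bZ)$-orbit at a time and sums over the finitely many orbits; your working directly with all of $\cO^{m,n}$ is harmless for an upper bound.

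The substantive gap is in your final step. You write that $f_j(x)$ being prime for every $j$ forces $f(x)=\prod_j f_j(x)$ to have all its prime factors exceeding a power of $T$, and hence $\pi_{m,n,f}(T)$ is dominated by the sifted set $\{x: p\mid f(x)\Rightarrow p>z\}$. This is false as stated: ``prime'' does not mean ``large prime.'' If $f_j(x)=3$, say, then $f_j(x)$ is prime but $f(x)$ has $3$ as a factor, so $x$ is \emph{not} in the sifted set; your inclusion fails and the sieve upper bound does not, on its own, control $\pi_{m,n,f}(T)$. What is needed, and what the paper supplies (Lemma 4.3 together with (4.38)--(4.39)), is a separate bound for the exceptional set
\[
\bigcup_{j=1}^{t(f)}\bigl\{\gamma\in\Gamma:\ \|\gamma\|\leq T,\ |f_j(\gamma v)|\leq T^{\epsilon_1}\bigr\}.
\]
For $z=T^{\epsilon_1}$ with $\epsilon_1$ small, this set has cardinality $\ll T^{a-\delta_1+\epsilon_1}$, which is $o(N_{m,n}(T)/(\log T)^{t(f)})$; the proof of this bound again uses the uniform congruence equidistribution (choose a well-sized auxiliary prime $p$ and note the level set $f_j(x)=m$ is negligible mod $p$, via Lang--Weil uniformity in $m$). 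Only after removing this exceptional set is the remainder contained in the sifted set, and your conclusion then follows. As written, your argument is incomplete without an analogue of this small-values lemma.
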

The lower bound  is given by 
\begin{Thm}
Let $\cO^{m , n }$ be as above and let $f \in \bQ [ x_{i j} ]$ weakly primitive
and taking integer values on $\cO^{m , n}$.  Assume that $f$ has $t(f)$
distinct irreducible factors in both $\bQ [ V_{m , n }]$ and $\bar{\bQ} [
V_{m , n}]$ and let $r  > 18.t(f).n_e^3.\mbox{deg }( f )$.  Then 

\begin{equation}
\{x \in \cO^{m , n}: | x | \leq T  \ \mbox{and} \  
f ( x ) \ \mbox{has at most} \ r \ \mbox{prime factors} \}
\gg \; \frac{N_{m , n} ( T )}
{( \log T )^{t(f)}}
\, ,
\end{equation}

\noindent
Here $n_e$ is the least even integer $\ge (n-1)$, and again the implied positive constant depends on $m , n$ and $f$.
\end{Thm}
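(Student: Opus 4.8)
The plan is to run a weighted combinatorial sieve (a Brun-type or Diamond–Halberstam–Richert $\beta$-sieve) on the orbit $\cO^{m,n}$, ordered by the norm $|\cdot|$, with sifting range $z = T^{1/s}$ for a suitable fixed $s$. The sifting set is $\mathcal A = \{x \in \cO^{m,n}: |x|\le T\}$ and for each prime $p$ we remove the residue classes on which $f(x)\equiv 0 \ (\mathrm{mod}\ p)$; since we want $f$ itself (equivalently each $f_j$) to have no small prime factors, the sifting density at $p$ is $\omega(p)/p = |\cO^f_p|/|\cO_p|$, which by weak primitivity and the factorization into $t=t(f)$ absolutely irreducible pieces behaves like $t/p + O(p^{-3/2})$ on average (Lang–Weil / Chebotarev over the variety $V_{m,n}$). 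This is the dimension-$t$ sieve input. The first and most important ingredient is the \emph{level of distribution}: I need an asymptotic
\[
|\{x\in\mathcal A : x \equiv 0 \text{ on the chosen classes mod } d\}| \;=\; \frac{\omega(d)}{d}\, N_{m,n}(T) \;+\; r_d(T),
\]
valid on average over squarefree $d \le T^\alpha$ for some explicit $\alpha>0$, with $\sum_{d\le T^\alpha}|r_d(T)| = o\!\big(N_{m,n}(T)/(\log T)^{t}\big)$. This is exactly the archimedean counting-with-congruence-conditions statement that the paper advertises (sharp and uniform counting of points on orbits ordered by norms, together with explicit approximations to Ramanujan), and it is here that the spectral gap / property-$(\tau)$ bounds and the mean ergodic theorem for the relevant semisimple group enter: the remainder $r_d(T)$ is controlled by a power saving $N_{m,n}(T)\cdot d^{A}/T^{\delta}$, where $\delta>0$ comes from the explicit bound toward Ramanujan and $A$ from the complexity of the congruence variety; balancing gives the admissible exponent $\alpha$.

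Second, once the level of distribution $\alpha$ is in hand, I feed it into the lower-bound combinatorial sieve. Because the sieve dimension is $t(f)$ (not $1$), the $\beta$-sieve produces a positive lower bound of the expected order $c\, N_{m,n}(T)/(\log T)^{t(f)}$ for the number of $x\in\mathcal A$ with all prime factors of $f(x)$ exceeding $z=T^{1/s}$, \emph{provided} $s$ is taken larger than a threshold depending on $t(f)$ and the dimension constant of the sieve. Such an $x$ automatically has $f(x)$ with at most $r$ prime factors once $z^{r+1} > \max_{|x|\le T}|f(x)| \asymp T^{\deg f}$, i.e.\ once $r \ge s\cdot\deg f$ (up to an additive $O(t)$). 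Chasing the $\beta$-sieve constants — where the "$18$" and the sieve dimension enter, and where $n_e$ appears because the admissible level of distribution for $\cO^{m,n}$ degrades with $n$ (the automorphic bound and the dimension of $V_{m,n}$ scale with $n$, and one loses a factor tied to the least even integer $\ge n-1$) — yields the stated sufficient condition $r > 18\, t(f)\, n_e^3\, \deg f$.

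Third, I must upgrade "the count is $\gg N_{m,n}(T)/(\log T)^{t(f)}$" to "the corresponding set is Zariski dense in $V_{m,n}$", which is the actual assertion of (1.6) read correctly — but in fact (1.6) only claims the cardinality lower bound, so this step is: observe that a set of integral points on $V_{m,n}$ of cardinality $\gg N_{m,n}(T)/(\log T)^{t}\to\infty$ faster than the count of integral points on any proper subvariety (those grow like $T^{\dim - \epsilon}$ with smaller exponent, again by the uniform counting results) must be Zariski dense; this is the standard escape-of-mass argument and is routine given the sharp counting asymptotics quoted as (1.2). The hypothesis that the $f_j$ are distinct and absolutely irreducible is used both to get the clean singular series (each local factor nonzero, product convergent as asserted after (1.5)) and to ensure the zero sets $\{f_j=0\}$ are themselves proper subvarieties, so that sifting out small prime divisors does not accidentally confine us to one of them.

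The main obstacle is unquestionably the level-of-distribution estimate: getting an \emph{explicit, positive} exponent $\alpha$ uniformly in the modulus $d$ for lattice-point counts on $\cO^{m,n}$ with congruence constraints, and tracking how $\alpha$ depends on $n$ through the best available bound toward the generalized Ramanujan conjecture for the ambient group (here $\mathrm{SL}_n$ acting on $\mathrm{Mat}_{n\times n}$ with fixed determinant, whose stabilizer-quotient analysis gives the principal-homogeneous-space structure). Everything downstream — the precise constant $18\, t(f)\, n_e^3\, \deg f$ — is bookkeeping in the $\beta$-sieve once that exponent is pinned down, but pinning it down requires the full strength of the spectral machinery and the uniform effective mean ergodic theorem developed in the body of the paper.
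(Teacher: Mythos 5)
Your proposal tracks the paper's argument quite closely in its essentials: sieve dimension $t(f)$ from Lang--Weil applied to the reduction of the orbit modulo $p$ (using strong approximation for the multiplicativity of $\rho_f$), level of distribution from a uniform lattice-point count over cosets of congruence subgroups (proved via an effective mean ergodic theorem calibrated by bounds toward Ramanujan), then a combinatorial lower-bound sieve followed by the observation that $|f(\gamma v)| \ll T^{\deg f}$ forces any $x$ with all prime factors of $f(x)$ exceeding $z = X^\alpha$ to have at most $\deg f/(a\alpha)$ prime factors. One minor mismatch in tooling: for Theorem 1.2 the paper uses the \emph{simple} combinatorial sieve (the threshold $s > 9t$ is what produces the $9$ and hence the $18$ in the bound), and only upgrades to the Diamond--Halberstam--Richert weighted sieve in Section~6 for the improved constants; since you offer both options this is harmless, but if you insist on a $\beta$-sieve you will not reproduce the exact stated bound.

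There is, however, one genuine gap. You take the sifting set to be all of $\mathcal{A} = \{x \in \cO^{m,n}: |x| \le T\}$, i.e.\ all of $V_{m,n}(\mathbb{Z})$ below height $T$, but the congruential input (multiplicativity of $\rho_f$ via strong approximation, the uniform lattice-point count in Theorem~3.2) applies to a \emph{single} $\Gamma$-orbit $\cO = \Gamma.v$ with $\Gamma = SL_n(\mathbb{Z})$, not to the union. And $V_{m,n}(\mathbb{Z})$ splits into finitely many $\Gamma$-orbits $\cO^{(1)}, \ldots, \cO^{(k(m))}$. The hypothesis in Theorem~1.2 is weak primitivity of $f$ on $V_{m,n}(\mathbb{Z})$, which does \emph{not} automatically give weak primitivity of $f$ on any individual orbit $\cO^{(j)}$ --- yet that is what is needed to guarantee the local densities $\rho_f(p) < p$ on the orbit you sieve. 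The paper closes this gap at the end of Section~4.3 with a Hecke-orbit / CRT argument: for each prime $p \nmid m$ all orbits reduce to a single $SL_n(\mathbb{Z}/p\mathbb{Z})$-orbit mod $p$, and for the primes $p \mid m$ one uses the product decomposition (4.45) to select a single $J$ such that $\cO^{(J)}$ is ``good'' at every ramified prime simultaneously, hence $f$ is weakly primitive on $\cO^{(J)}$. Applying the orbit-level Theorem~1.7 to $\cO^{(J)}$ then gives the claimed lower bound. Without this step, your sieve could in principle be run on an orbit where some local density is degenerate, and the singular series would vanish.

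A smaller point: the assertion that $\omega(p)/p = t/p + O(p^{-3/2})$ is established pointwise for $p$ outside a fixed finite exceptional set (absorbed into the sieve constants $C_2$), not merely on average; this is what the paper's axiom $(A_2)$ requires. And as you note yourself, the Zariski-density upgrade is the content of Corollary~1.3, not of Theorem~1.2, so that third paragraph is extraneous to the statement at hand (though the escape-of-mass argument you sketch is essentially Lemma~4.2 and is what the paper uses for the corollary).
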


\medskip
\begin{corr}
Under the assumptions in Theorem 1.2, the saturation number satisfies the upper bound  
$r_0 ( V_{m , n} (  \bZ ) , f ) \, \leq \,  \, 18 . t(f) . n_e^3 . \mbox{ deg }(f)+1$. 
Namely the set of $x \in V_{m , n } ( \bZ)$ for which
$f ( x )$ has at most $r_0$ prime factors, is Zariski dense in $V_{m ,
n}$. 
\end{corr}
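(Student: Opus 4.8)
The plan is to deduce the corollary directly from Theorem 1.2, with essentially no new work beyond the observation that Zariski density on the affine variety $V_{m,n}$ follows once we exhibit a positive-proportion subset of the orbit with the required factorization property. Concretely, set $r := \lfloor 18\, t(f)\, n_e^3\, \deg(f)\rfloor + 1$, which is the smallest integer strictly exceeding the threshold $18\, t(f)\, n_e^3\, \deg(f)$ appearing in Theorem 1.2; in particular this $r$ is admissible in that theorem, so the lower bound (1.6) applies. Thus for all large $T$ the number of $x\in\cO^{m,n}$ with $|x|\le T$ at which $f(x)$ has at most $r$ prime factors is $\gg N_{m,n}(T)/(\log T)^{t(f)}$.

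The remaining point is to promote this counting statement to Zariski density of the set $S := \{x\in\cO^{m,n} : f(x) \text{ has at most } r \text{ prime factors}\}$ in $V_{m,n}$. First I would invoke the asymptotic (1.2), $N_{m,n}(T)\sim c(\cO^{m,n})\,T^{n^2-n}$, to conclude that $|S\cap\{|x|\le T\}| \gg T^{n^2-n}/(\log T)^{t(f)}$, a quantity that tends to infinity faster than any fixed power less than $n^2-n$. Now suppose for contradiction that $S$ is not Zariski dense in $V_{m,n}$; then its Zariski closure $W := Zc\ell(S)$ is a proper closed subvariety of the irreducible variety $V_{m,n}$ (irreducibility of $V_{m,n}$ being standard, and reflected in the fact from [San] that its coordinate ring is a UFD, hence a domain). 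A proper subvariety of $V_{m,n}$ has dimension at most $(n^2-n)-1$, and for such a variety a standard estimate (e.g. counting points on each of its finitely many components, each defined by at least one extra polynomial equation of bounded degree, via a Bézout/projection argument) gives $|W\cap\{|x|\le T\}| \ll T^{n^2-n-1}$. Since $S\subseteq W$, this contradicts the lower bound $|S\cap\{|x|\le T\}| \gg T^{n^2-n}/(\log T)^{t(f)}$ for $T$ large. Hence $S$ is Zariski dense in $V_{m,n}$, which is exactly the assertion that $r_0(V_{m,n}(\bZ),f)\le r = 18\, t(f)\, n_e^3\, \deg(f) + 1$ (the floor being absorbed, since $\deg f$, $t(f)$ and $n_e$ are integers and the stated bound is written without the floor).

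I do not anticipate a genuine obstacle here: the content is entirely in Theorem 1.2, and the corollary is the routine packaging step. The only mild technical point worth spelling out is the dimension–counting lemma used in the contradiction argument, namely that integral points of bounded norm on a subvariety of dimension $\le d$ number $O(T^d)$; this is elementary and can be cited from [D-R-S] or proved by induction on dimension via a linear projection to affine $d$-space. One should also note explicitly, as the statement of Theorem 1.2 already requires, that $f$ is weakly primitive and has $t(f)$ distinct irreducible factors over both $\bQ[V_{m,n}]$ and $\bar\bQ[V_{m,n}]$, so that the hypotheses of Theorem 1.2 are met verbatim; under those same hypotheses the corollary's conclusion is immediate.
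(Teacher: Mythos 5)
Your argument is correct and follows the same overall strategy as the paper --- deduce a lower bound $\gg N_{m,n}(T)/(\log T)^{t(f)}$ from Theorem 1.2 for the admissible $r$, then derive a contradiction from the assumption that these points lie in a proper subvariety $W\subsetneq V_{m,n}$ --- but you use a different key estimate in the contradiction step. The paper (see the proof of Corollary 1.8, which is specialized to Corollary 1.3 at the end of Section 4.3) invokes Lemma 4.2, an orbit-specific bound $O(T^{a-\delta})$ for the number of $\gamma\in\Gamma$ with $\|\gamma\|\le T$ and $h(\gamma v)=0$; that lemma is proved by reducing modulo a well-chosen prime $p\asymp T^\alpha$ (located via Chebotarev), using Lang--Weil on $V(\mathbb F_p)$, and applying the uniform equidistribution error term from Theorem 3.2, yielding $\delta=\theta/(1+\dim G)^2$. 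You instead appeal to the generic, elementary fact that integer points of height $\le T$ on a fixed affine variety of dimension $d$ over $\mathbb Q$ number $O(T^d)$, which when applied to $W$ of dimension $\le n^2-n-1$ gives a stronger power saving ($\delta=1$) and avoids the orbit machinery entirely. Both routes are valid; yours is simpler and quantitatively sharper for this particular step, at the cost of needing the projection/Noether-normalization lemma, which is standard but not in the paper's toolkit (and is not in [D-R-S], which is about asymptotics for orbits of homogeneous varieties, not upper bounds for integer points on arbitrary subvarieties --- better to prove it directly by the induction on dimension you sketch, or cite a reference such as work on integral points of bounded height). The paper's choice of Lemma 4.2 is not gratuitous: in the general setting of Theorem 1.7/Corollary 1.8 the orbit $\cO=\Gamma\cdot v$ may be much thinner than $V(\mathbb Z)$, so an argument internal to the orbit is the natural one there, though for the contradiction a coarse ambient upper bound as you give also suffices.
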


\medskip
In the case that $f ( x ) = \prod\limits_{1 \, \leq \, i \, \leq
\, j \, \leq \, n} \hspace{-.20in} x_{ij}$ Corollary 1.3 can be sharpened
considerably.  Exploiting the linearity of the determinant form in the
rows and columns of a matrix,  we use the method of Vinogradov [Vi] (see
[Va]) for handling one linear equation in 3 or more prime variables to
show that we can make all coordinates of the matrix simultaneously prime
as long as there is no local obstruction.  We have

\medskip
\begin{Thm}
$f ( x ) \, = \,\prod\limits_{1 \, \leq \, i \, \leq \, j \,
\leq \, n} \hspace{-.20in} x_{ij}$ is weakly primitive for $V_{m , n} ( \bZ)$ {\sf iff} $m
\equiv 0(\mbox{mod} \, 2^{n-1})$, and if this is the case and $n \geq 3$ then $r_0 (V_{m , n} ( \bZ ) , f ) =
n^2$.  That is for $n \geq 3$ the set of $x \in V_{m , n} ( \bZ )$ for
which each $x_{ij}$ is prime, is Zariski dense in $V_{m , n}$ {\sf iff}
$m \equiv 0 ( 2^{n - 1})$.
\end{Thm}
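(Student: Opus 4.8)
The plan is to split the statement into its two halves: the local (weak primitivity) criterion, and the saturation-number equality $r_0(V_{m,n}(\bZ),f)=n^2$ for $n\ge 3$. For the first half, I would compute $\gcd\{f(x):x\in\cO^{m,n}\}$ directly. Given $x\in\cO^{m,n}$, expanding $\det x = m$ along the first row shows $m$ lies in the ideal generated by the entries $x_{1j}$, and iterating this kind of row/column manipulation one sees that the product of all diagonal entries (indeed of the upper-triangular entries $x_{ij}$, $i\le j$) must carry all the prime-power constraints coming from $m$. The cleanest route is to exhibit explicit matrices: an upper-triangular matrix with diagonal $(2^{n-1},1,\dots,1)$ and determinant $m$ (using $m\equiv 0\ (2^{n-1})$) shows $f$ can avoid any odd prime and any power of $2$ beyond $2^{n-1}$, while the structure of $2$-adic solutions of $\det x=m$ forces $2^{n-1}\mid f(x)$ always when $2^{n-1}\mid m$ — this is a short $2$-adic linear-algebra computation (a matrix over $\bZ/2\bZ$ with an even number of $1$'s among upper-triangular entries... more precisely one tracks the $2$-adic valuation of $\det$ in terms of the valuations of the $x_{ij}$). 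Conversely if $2^{n-1}\nmid m$ one produces $x\in\cO^{m,n}$ with all $x_{ij}$ odd, killing weak primitivity failure. This establishes the "iff".

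For the upper bound $r_0\le n^2$, note $t(f)=n^2$ here (the $n^2$ coordinate functions $x_{ij}$ are the irreducible factors), so $r_0\ge t(f)=n^2$ is automatic, and Corollary 1.3 (or rather the sharper analysis to come) would in principle give some finite bound; the real content is the matching lower statement: the set of $x\in V_{m,n}(\bZ)$ with every $x_{ij}$ prime is Zariski dense. This is where the Vinogradov three-primes method enters. The key structural observation is that $\det x=m$ is, for each fixed choice of all but one row (say row $k$), a single linear equation $\sum_j c_j x_{kj}=m$ in the $n\ge 3$ unknowns $x_{k1},\dots,x_{kn}$, whose coefficients $c_j=(-1)^{k+j}M_{kj}$ are the signed cofactors. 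So I would proceed iteratively: fix the first $n-1$ rows to be a generic integer matrix all of whose entries are prime and whose relevant cofactors satisfy the necessary congruence conditions (this is arranged by hand, using the $n\ge 2$ degrees of freedom and the weak-primitivity hypothesis $2^{n-1}\mid m$), and then solve the resulting linear equation in $n$ prime variables for the last row. The circle method / Vinogradov bound (as in [Vi], [Va]) guarantees, under the necessary local conditions being met, that the number of prime solutions $(x_{n1},\dots,x_{nn})$ of $\sum c_j x_{nj}=m$ with $|x_{nj}|\le T$ is $\gg T^{n-1}/(\log T)^n$ — non-empty and in fact plentiful — provided $n\ge 3$ (this is exactly why $n\ge 3$ is needed: two prime variables is binary-Goldbach-hard). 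To get Zariski density rather than a single point, I would observe that one can vary the fixed upper block through a Zariski-dense family of choices, and for each the solution set of the last row is infinite and varies, so the union is not contained in any proper subvariety of $V_{m,n}$; alternatively, a direct dimension count on the solutions produced shows they cannot lie on a hypersurface.

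The main obstacle I anticipate is the bookkeeping of local conditions in the Vinogradov step: one must verify that the linear form $\sum c_j x_{nj}=m$ coming from the chosen upper block actually has prime solutions at every place — i.e. that the cofactors $c_j$ and $m$ are not in "bad position" modulo small primes (e.g. one needs the $c_j$ to not all be even, and congruence conditions mod other small primes that would otherwise force some $x_{nj}$ to be divisible by that prime). Managing this requires choosing the first $n-1$ rows carefully and simultaneously — essentially a constructive lemma that a suitable all-prime $(n-1)\times n$ integer block exists with cofactors avoiding all the finitely many relevant congruence obstructions, and this is precisely where the hypothesis $m\equiv 0\ (\mathrm{mod}\ 2^{n-1})$ is consumed. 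Once that lemma is in place, the analytic input is the standard ternary (or $n$-ary, $n\ge3$) Vinogradov estimate applied uniformly, and assembling Zariski density from the resulting abundance of points is routine. I also expect that extracting an honest \emph{iff} — that $2^{n-1}\nmid m$ genuinely obstructs, not merely that our construction fails — requires the explicit $2$-adic valuation identity for $\det$ on matrices with all entries odd, which is the one genuinely computational lemma.
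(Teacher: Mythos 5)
Your strategy for the hard half of the theorem — fix an all-prime $(n-1)\times n$ block with cofactors satisfying suitable coprimality and congruence conditions, then treat the remaining row as a single linear equation $\sum_j (\pm)A_j s_j = m$ in $n\ge 3$ prime unknowns and invoke Vinogradov's method, varying the block over a Zariski-dense family to get density rather than a single point — is essentially the paper's proof. The paper formalizes your "constructive lemma" as Lemma 5.4, proved by induction on $n$ together with repeated applications of Dirichlet's theorem, and it handles Zariski density at the end exactly as you suggest, by pushing density of blocks through the projection $\Upsilon$ and noting that the fibers of $\Upsilon$ over good blocks contain Zariski-dense prime solution sets of the corresponding hyperplane. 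You also correctly locate why $n\ge 3$ is needed and where the hypothesis $m\equiv 0\,(2^{n-1})$ is consumed.

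However, your treatment of the weak-primitivity criterion has the logic reversed at exactly the points you flag as a "short $2$-adic computation." The correct lemma is: \emph{if every entry of $x$ is odd, then $2^{n-1}\mid\det x$} (subtract the first column from each other column; the resulting $n-1$ columns become even, so the determinant acquires a factor $2^{n-1}$). The consequence is that if $2^{n-1}\nmid m$, then \emph{no} $x\in V_{m,n}(\bZ)$ can have all entries odd, so every $x$ has some even entry, hence $2\mid f(x)$ for all $x$ and $f$ is \emph{not} weakly primitive. Your two assertions are both the opposite of this: you write that "$2^{n-1}\mid f(x)$ always when $2^{n-1}\mid m$" (false — e.g. the matrix with $\pm1$ entries exhibited in the paper has $\det=2^{n-1}$ and $f(x)=\pm1$), and that "if $2^{n-1}\nmid m$ one produces $x$ with all $x_{ij}$ odd" (impossible, by the very $2$-adic lemma you invoke). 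Moreover your example matrix with diagonal $(2^{n-1},1,\dots,1)$ and ones elsewhere has $f(x)=2^{n-1}$, which is even, so it does not address the prime $2$ at all; to establish weak primitivity when $2^{n-1}\mid m$ you need an all-odd (in fact all-prime) matrix in $V_{m,n}(\bZ)$, which is what the density construction supplies. The rest of your plan is sound, but this step needs to be turned the right way round before it proves anything.
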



\medskip
\noindent
\underline{\bf Remarks 1.5}
\begin{namelist}{xxxx}
\item[1)] The proof of Theorem 1.4 provides a lower bound for
${\pi}_{m , n , f} ( T )$ which is 
a power of $T$ but not the one expected in Conjecture (1.4).  
\item[2)] Theorem 1.4 should hold when $n = 2$ but
Vinogradov's methods don not apply in this binary case.  For $m$ fixed the
infinitude  of $x_{i j}$ satisfying $x_{11} x_{22} - x_{12} x_{21} = 2
m$ and $x_{ij}$ prime, is apparently not known.  Recent work of
Goldston-Graham-Pintz and Yildrim [G-G-P-Y] on small differences of
numbers which are products of exactly two primes, shows that the desired
set is infinite for at least one number $m$ in $\{ 1 , 2 , 3 \}$.
\item[3)] An immediate improvement to the value of $r_0$ arises by choosing 
the norm to be invariant under the rotation group. This improvement, together with much more 
significant ones, will be considered systematically in \S 6. 
\end{namelist}

\medskip
\noindent
1.B: \ \underline{\sf Prime and almost prime points on principal homogeneous
spaces.}

\medskip

Theorem 1.1 and 1.2 are special cases of more general results which are
concerned with finding points on an orbit $\cO$ of $v \in \bZ^n$ under a
subgroup $\Gamma$ of $GL_n ( \bZ)$ at which a given polynomial $f \in
\bQ [ x_1 , \ldots , x_n]$ which is integral on $\cO$, has few prime
factors.  The approach is based on the ``affine linear sieve''
introduced recently in [B-G-S2].  Our purpose here is to specialize to
$\Gamma$ being a congruence subgroup of an algebraically simply connected  semisimple linear algebraic group $\mathbb{G}
\subset GL_n$ defined over $\bQ$ and for which the stabilizer of $v$ in
$\mathbb{G}$ is trivial.  This allows us to make use of the well developed
analytic methods [D-R-S], [G-N1] for counting points in such
orbits in a big Euclidean ball, as well as the strong bounds towards the
general Ramanujan Conjectures that are known from the theory
of automorphic forms (see [Cl], [Sa1]).  Our restriction to principal
homogeneous spaces (i.e. the stabilizer of $v$ being trivial) or $\mathbb{G}$
being algebraically simply connected are not serious ones as far as the production of
a Zariski dense set of points $x$ on an orbit at which $f ( x )$ has few
prime factors.  As explained in [B-G-S2] the dominant $\bQ$-morphisms
from $\mathbb{G}$ to an orbit $\mathbb{G}/\mathbb{H}$ and from the simply connected cover
$\tilde{\mathbb{G}}$ of $\mathbb{G}$, to $\mathbb{G}$, reduce the basic saturation problem for orbits of
more general congruence groups to the cases that we consider in this
paper.  

\medskip
We describe our results in more detail.  Let $\mathbb{G} \subset GL_n ( \bC )$ be
a connected and algebraically simply connected semi-simple algebraic matrix group defined over
$\bQ$.  Let $G ( \bQ)$ be its rational points and $\Gamma = \mathbb{G} ( \bZ) = \mathbb{G}
( \bQ ) \cap GL_n ( \bZ)$ its integral points.  Fix $v \in \bZ^n$ and
let $V = \mathbb{G} . v$ be the corresponding orbit which we assume is
Zariski closed in $A^n$.  Since $\mathbb{G}$ is algebraically connected and the stabilizer of
$v$ is assumed to be trivial, $V$ is an absolutely irreducible affine
variety defined over $\bQ$ and has dimension equal to $\dim \mathbb{G}$.  The
ring of $\mathbb{G}$-invariants for the action of $\mathbb{G}$ on the $n$-dimensional
space $A^n$ separates the closed $\mathbb{G}$-orbits ([B-HC]).  We can choose
generators $h_1 , h_2 , \ldots , h_\nu$ in $\bQ [ x_1 , \ldots , x_n ]$ of
this ring so that $V$ is given by

\begin{equation}
V: h_j ( x ) \, = \, \lambda_j , \, j \, = \, 1 , \ldots \nu \, , \
\mbox{with} \ \lambda_j \in \bQ \, .
\end{equation}

\noindent
Let $\cO = \Gamma . v$ be the $\Gamma$-orbit of $v$ in $\bZ^n$.
According to [Bo] $\cO$ is Zariski dense in $V$.  The coordinate ring of
$V$, $\bQ [ x _1,\ldots, x_n] / (h_1 - \lambda_1 , \ldots , h_\nu - \lambda_j)$ is a
unique factorization domain [San].  Hence an $f \in \bQ [ x_1 , \ldots ,
x_n]$ factors into irreducibles $f = f_1 f_2 \cdots f_t$ in this ring.
We assume that these $f_j$'s are distinct and that they are irreducible
in $\bar{\bQ} [ x_1 , \ldots x_n] / (h_1 - \lambda_1 , \ldots , h_\nu -
\lambda_\nu)$, that $f$ takes integer values on $\cO$ and that it is
$\cO$-weakly primitive.  The saturation number $r_0 ( \cO , f )$ of the pair
$(\cO, f )$ is the least $r$ such that the set of $x \in \cO$ for which
$f ( x )$ has at most $r$ prime factors is Zariski dense in $V ( = {\sf
Z c l} ( \cO ))$.

\medskip
To order the elements of $\cO$ we use the following ``height''
functions.  Let $\parallel  \, \parallel$ be any norm on the linear space
$\mbox{Mat}_{n \times n} ( \bR )$.  For $T > 0$ set 

\begin{equation}
\cO ( T ) \, = \, \{
\gamma v: \, \parallel \gamma \parallel \, \leq \, T \, , \, \gamma \in
\Gamma \}
\end{equation}  

\noindent
(this depends on $v$ but in an insignificant way).

In many interesting cases these sets $\cO ( T )$ can be described as $\{ x
\in \cO: \, | x | \, \leq \, T \}$ where $ | \ |$ is a norm on $\bR^n$,
but this is not true in general.  The main term of the asymptotics for
$N_\cO ( T ) = | \cO ( T ) |$ is known for any norm ([G-W], [Ma], [GN1]) and
takes the form 

\begin{equation}
N_\cO ( T ) \, \sim \, c_1 ( \cO ) T^a ( \log T )^b
\end{equation}

\noindent
with $c_1 ( \cO ) > 0$ and $a > 0$, and $b \in \bN$ a non-negative integer.  The
numbers $a$ and $b$ are given explicitly in terms of the data in
Theorem 3.1 below (see the discussion following that theorem) and they are
independent of $\parallel \ \parallel$. 

\medskip
\setcounter{Thm}{5}
\begin{Thm}
Let $\cO$ and $f$ be as above and assume that $f_j , j = 1 , \ldots ,
t(f)$, are integral on $\cO$, then for $T \geq 2$,

\[
| \{
x \in \cO ( T ) : \, f_j ( x ) \ \mbox{is prime for} \ j \, = \, 1 , 2 ,
\ldots , t (f)\} | \, \ll \, 
\frac{N_\cO ( T )}{( \log T )^{t(f)}}
\]

\noindent 
the implied constant depending on $f$ and $\cO$.
\end{Thm}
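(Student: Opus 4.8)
The plan is to prove Theorem 1.6 by the standard upper-bound sieve (Selberg's sieve, or the "combinatorial sieve" of Halberstam–Richert [H-R]) applied to the sequence indexed by the orbit points in $\cO(T)$, weighted so that the relevant "level of distribution" comes from equidistribution of $\cO$ in congruence quotients. First I would set up the sifting problem: for each prime $p$ let $\Omega_p \subset \cO_p$ be the set of residues $x \pmod p$ with $f_j(x) \equiv 0 \pmod p$ for at least one $j$; more precisely, to detect simultaneous primality of $f_1,\dots,f_t$ we sift out, for each $j$ and each $p$, the residues where $f_j \equiv 0 \pmod p$. The quantity to bound is the number of $x \in \cO(T)$ surviving the sieve up to a suitable cutoff $z = T^\eta$. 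The main input is a level-of-distribution estimate: for squarefree $d$ up to some power of $T$,
\[
|\{x \in \cO(T) : x \equiv y \pmod d\}| \;=\; \frac{N_\cO(T)}{|\cO_d|} \;+\; E(T,d),
\]
where $|\cO_d|$ is the size of the reduction mod $d$ and the error $E(T,d)$ is controlled on average over $d$. For an upper-bound sieve one only needs $E(T,d)$ to be smaller than the main term summed against the sieve weights; this is where the spectral gap / explicit bounds toward Ramanujan for $\mathbb{G}$ enter, giving a power saving $E(T,d) \ll N_\cO(T) \cdot d^{A}/T^{\delta}$ for some $\delta>0$ depending on the spectral gap — enough to take $z$ a small fixed power of $T$.

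Next I would control the local densities. One needs $|\Omega_p|/|\cO_p| \asymp t(f)/p$ for all but finitely many $p$: this follows from the Lang–Weil estimate applied to the absolutely irreducible varieties $\{x \in V : f_j(x) = 0\}$ over $\mathbb{F}_p$ (each $f_j$ is irreducible in $\bar{\bQ}[V]$, hence geometrically irreducible, so its reduction mod $p$ is a hypersurface in $V_p$ of the expected codimension for $p$ large, contributing $\asymp 1/p$), combined with the smoothness/dimension statement $|\cO_p| \asymp p^{\dim \mathbb{G}}$ that comes from strong approximation for the simply connected group $\mathbb{G}$ (so that $\cO_p = \mathbb{G}(\bZ/p\bZ).v$ fills up $V_p(\mathbb{F}_p)$ up to the usual factor). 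Then the product $\prod_{p \le z}(1 - |\Omega_p|/|\cO_p|)$ behaves like $\prod_{p \le z}(1 - t(f)/p + O(1/p^2)) \asymp (\log z)^{-t(f)} \asymp (\log T)^{-t(f)}$ by Mertens. Feeding this into the Fundamental Lemma of sieve theory (e.g. [H-R, Theorem 7.1]) yields the bound $\ll N_\cO(T)/(\log z)^{t(f)} \ll N_\cO(T)/(\log T)^{t(f)}$, which is exactly the claimed estimate; the implied constant is explicit in terms of the finitely many bad primes, $\deg f$, and the constants in the counting and equidistribution statements.

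The main obstacle, and the technical heart of the argument, is establishing the level-of-distribution estimate with a genuine power-of-$T$ saving uniformly in $d$ — i.e. counting points of $\cO(T)$ in congruence classes mod $d$ with an error term that beats $N_\cO(T)/d^{O(1)}$ by a fixed power of $T$. This requires combining the archimedean counting asymptotics (1.10), coming from mixing/equidistribution of $\Gamma$ acting on $\mathbb{G}(\bR)$ via the methods of [D-R-S], [G-N1], with a quantitative rate of equidistribution of $\cO(T)$ in $\mathbb{G}(\hat{\bZ})/\mathbb{G}(d\hat{\bZ})$. The rate, and its uniformity in $d$, is governed by an explicit spectral gap for the congruence tower $\{\Gamma(d)\}$, which in turn rests on the known approximations to the generalized Ramanujan conjectures (property $(\tau)$ with an explicit exponent, via [Cl], [Sa1]); handling the uniformity in $d$ — so that the power of $d$ in the error is absorbed by choosing $z$ a sufficiently small power of $T$ — is the delicate point. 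For the \emph{upper} bound of Theorem 1.6, however, one is aided by the fact that only a one-sided, average-over-$d$ control of the error is needed, and $z$ may be taken as small a fixed power of $T$ as convenience dictates, so the spectral-gap exponent need only be some fixed positive number rather than anything sharp.
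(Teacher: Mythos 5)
Your proposal is essentially the paper's approach in its broad outline: a combinatorial/Fundamental Lemma upper-bound sieve, local densities controlled by Lang--Weil and strong approximation to get sieve dimension $t(f)$, and a level-of-distribution estimate for $\cO(T)$ in congruence classes mod $d$ coming from the uniform lattice point count of Theorem 3.2, which in turn rests on the explicit spectral gap for the congruence tower. All of that matches the paper's Sections 2--4.

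However, there is a genuine gap in the logic connecting the sieve output to the statement of Theorem 1.6. The sieve gives an upper bound for $S(\mathcal A, P_z)$, which counts $\gamma$ with $\|\gamma\|\le T$ and $(f(\gamma v),\,\prod_{p\le z}p)=1$; equivalently, every prime factor of every $f_j(\gamma v)$ exceeds $z$. But the target set $\{\gamma : f_j(\gamma v)\text{ prime for all }j\}$ is \emph{not} contained in this sieved set: if some $f_j(\gamma v)$ happens to be a prime $p\le z$, that $\gamma$ is excluded from the sieved set yet belongs to the target. In the classical one-variable setting this is harmless because $|f(n)|\to\infty$ with $|n|$, so small values don't occur once $|n|$ is large. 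On the orbit $\cO$ that is false: the level set $\{x\in V : f_j(x)=m\}$ is a subvariety of $V$ of codimension one and can carry $\gg T^{a-\delta'}$ orbit points with $\|\gamma\|\le T$, so the set of $\gamma$ with $|f_j(\gamma v)|\le T^{\epsilon_1}$ is \emph{a priori} not negligible. The paper handles this explicitly via Lemma 4.3 (a uniform-in-$m$ bound $|\{\gamma:\|\gamma\|\le T,\; f_j(\gamma v)=m\}|\ll T^{a-\delta_1}$, proved using an auxiliary prime $p\asymp T^\alpha$, the congruential counting estimate, and a Lang--Weil bound uniform in $m$), and then the decomposition (4.39) splits the target set into the "small values" part (bounded by Lemma 4.3) and the part lying in the sieved set. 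You need to supply this step; without it, your bound on the sieved set does not bound the set in the theorem.
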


\medskip
\begin{Thm}
Let $\cO$ and $f$ be as above and let $r  > ( 9 t(f)\,  ( 1 + \dim G
)^2 \cdot \,2n_e(\Gamma) \deg (f) ) / a$, where $n_e(\Gamma )$ is the
integer defined following Theorem 3.3.  Then as $T \longrightarrow
\infty$

\[
| \{ x \in \cO (T ): f ( x ) \ 
\mbox{has at most} \ r \ \mbox{prime factors} \}
| \, \gg \, \frac{N_\cO ( T )}{(\log T )^{t(f)}} \, ,
\]

\noindent
the implied constant depending on $f$ and $\cO$.
\end{Thm}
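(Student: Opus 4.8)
The plan is to run a combinatorial lower-bound sieve — of Brun/Rosser–Iwaniec type, as in [H-R] and in the affine sieve of [B-G-S2] — on the single polynomial $f$ applied to the sequence $\mathcal A=(f(x))_{x\in\cO(T)}$ weighted by counting. The feature responsible for $r$ being merely bounded rather than the conjectural $t(f)$ is that the level of distribution available here is only a fixed \emph{power} $T^{\theta}$ of $T$, in contrast to the classical $T/(\log T)^{A}$; the combinatorial sieve with a power level of distribution nevertheless produces values all of whose prime factors exceed $T^{\theta/s}$ for suitable $s$, and since $|f(x)|\ll T^{\deg(f)}$ on $\cO(T)$ this forces $\Omega(f(x))\le s\deg(f)/\theta$.

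\textbf{Step 1: local densities and the sieve dimension.} For squarefree $d$ set $\cO_d(T)=\{x\in\cO(T):f(x)\equiv 0\ (\mathrm{mod}\ d)\}$. I would first establish
\[
|\cO_d(T)| \;=\; \beta(d)\,N_\cO(T)\;+\;r_d(T),\qquad \beta(d)=\frac{|\cO_d^{\,f}|}{|\cO_d|},
\]
with $\beta$ multiplicative. Multiplicativity rests on strong approximation for the connected, algebraically simply connected semisimple group $\mathbb{G}$, which gives that the reduction of $\cO$ modulo $d$ is the full product over $p\mid d$ of the reductions modulo $p$; this is exactly where the simply connected hypothesis is used. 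Next, $\beta(p)=t(f)/p+O(p^{-3/2})$ follows from the Lang–Weil estimate applied to $V$ and to its geometrically irreducible, codimension-one subvarieties $\{f_j=0\}$ — here one uses that $f$ has $t(f)$ distinct geometrically irreducible factors in $\bar{\bQ}[V]$ — and weak primitivity gives $\beta(p)<1$ for every $p$, finitely many ramified or otherwise exceptional primes being set aside and absorbed into the implied constant. Thus the sieve has dimension $\kappa=t(f)$, which produces the factor $(\log T)^{-t(f)}$.

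\textbf{Step 2: level of distribution (the main obstacle).} The crux is to bound $\sum_{d\le T^{\theta}}\mu^2(d)\,|r_d(T)|$ for a positive $\theta$. After translating by a fixed residue, $\cO_d(T)$ is a sub-orbit of the principal congruence subgroup $\Gamma(d)$ counted in a ball of radius $T$; counting it with an error polynomial in $d$ while saving a fixed power of $N_\cO(T)$ requires a spectral gap for $\Gamma(d)\backslash\mathbb{G}$ that is \emph{uniform in} $d$. This is precisely where the explicit approximations towards the generalized Ramanujan conjectures enter — property $\tau$ together with the explicit automorphic input encoded in the integer $n_e(\Gamma)$ of Theorem~3.3 — combined with a quantitative, uniform-in-congruence-subgroup mean ergodic theorem and the counting machinery of [D-R-S], [G-N1]. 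The output is an estimate of the shape $|r_d(T)|\ll d^{(1+\dim G)^2}\,N_\cO(T)^{1-\eta}$ with an explicit $\eta=\eta(\Gamma)>0$ comparable to $1/n_e(\Gamma)$, so that the sum over $d\le T^{\theta}$ stays below $N_\cO(T)^{1-\eta/2}$ provided $\theta<a\,\eta/(2(1+\dim G)^2)$; it is precisely in passing from the scale $N_\cO(T)\asymp T^{a}$ to a power of $T$ that the exponent $a$ enters, and this is the source of the division by $a$ in the final bound. I expect making the error term genuinely uniform in $d$ with explicit exponents — equivalently, pinning down $\eta$ in terms of the spectral gap — to be the hardest step, and it is what fixes the precise shape $9\,t(f)(1+\dim G)^2\cdot 2\,n_e(\Gamma)\deg(f)/a$.

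\textbf{Step 3: combinatorial sieve and bookkeeping.} With dimension $\kappa=t(f)$ and level of distribution $D=T^{\theta}$ where $\theta=a/(2(1+\dim G)^2 n_e(\Gamma))$, the lower-bound combinatorial sieve produces $\gg N_\cO(T)/(\log T)^{t(f)}$ points $x\in\cO(T)$ for which $f(x)$ has no prime factor below $D^{1/s}=T^{\theta/s}$, valid for $s$ exceeding an explicit multiple of $t(f)$ (a safe choice being $s=9\,t(f)$, the constant $9$ absorbing the bounded dimension-dependent sieve constant of dimension $\kappa$). For such $x$ one has $\Omega(f(x))\le s\,\deg(f)/\theta=\bigl(9\,t(f)(1+\dim G)^2\cdot 2\,n_e(\Gamma)\deg(f)\bigr)/a$, which is exactly the stated range for $r$, and one checks that the implied constants depend only on $f$ and $\cO$. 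Note that Selberg's upper-bound sieve used for Theorems~1.1 and 1.6 does not suffice here, since it cannot by itself produce almost primes; and the corresponding saturation (Zariski density) statement follows at once, because a subset of $\cO(T)$ of size $\gg N_\cO(T)/(\log T)^{t(f)}$ cannot be contained in a proper subvariety $W\subsetneq V$, for which the same counting methods give $|\cO(T)\cap W|\ll N_\cO(T)^{1-\epsilon}$.
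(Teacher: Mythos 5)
Your proposal follows essentially the same route as the paper's proof: local densities and sieve dimension $t(f)$ via strong approximation and Lang--Weil (the paper's \S4.1), level of distribution from the uniform spectral/mean-ergodic estimates of Theorems~3.2--3.3, the simple combinatorial sieve of \S2 with $s>9t(f)$, and the final count of prime factors from $|f(x)|\ll T^{\deg f}$ together with the absence of prime factors below $z=T^{a\tau/s}$. The minor bookkeeping discrepancy in your per-$d$ remainder exponent (you write $d^{(1+\dim G)^2}$ where the paper has $d^{\dim G}$, with one factor of $(1+\dim G)$ actually arising from the sum over $d\le D$ and the other from the exponent $\theta/(1+\dim G)$ in the error of Theorem~3.2) does not change the final arithmetic.
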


\begin{corr}
Let $\cO $ and $f$ be as above.  Then

\[
r_0 ( \cO , f ) \, \leq \, \frac{ 9 t(f) \, ( 1 + \, \dim G )^2 \cdot 2n_e(\Gamma)\deg (f)}{ a} +1\, .
\]

\end{corr}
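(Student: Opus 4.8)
The plan is to read the corollary off from Theorem 1.7, the only additional point being that an \emph{infinite} supply of points of $\cO$ at which $f$ has few prime factors must already be \emph{Zariski dense} in $V$.

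First I would take $r$ to be the least integer strictly greater than $\dfrac{9\,t(f)\,(1+\dim G)^2\cdot 2n_e(\Gamma)\deg(f)}{a}$, so that
\[
r\ \le\ \frac{9\,t(f)\,(1+\dim G)^2\cdot 2n_e(\Gamma)\deg(f)}{a}+1 .
\]
Put $R_r=\{x\in\cO:\,f(x)\text{ has at most }r\text{ prime factors}\}$, so that $R_r(T):=R_r\cap\cO(T)$ is exactly the set counted in Theorem 1.7. With this value of $r$, that theorem gives, for all large $T$,
\[
\bigl|R_r(T)\bigr|\ \gg\ \frac{N_\cO(T)}{(\log T)^{t(f)}} .
\]
Since $a>0$ in (1.9), the right-hand side tends to infinity, so $R_r$ is an infinite subset of $\cO$.

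Next I would show that $R_r$ is Zariski dense in $V$. If not, then $W:=\mathrm{Zcl}(R_r)$ is a Zariski-closed subset of $V$ with $W\ne V$, and since $V$ is absolutely irreducible of dimension $\dim G$, every component of $W$ has dimension at most $\dim G-1$. The crux is to bound $|\cO(T)\cap W|$. Because the stabilizer of $v$ in $\mathbb{G}$ is trivial, $\gamma\mapsto\gamma v$ is a bijection of $\{\gamma\in\Gamma:\|\gamma\|\le T\}$ onto $\cO(T)$, so this amounts to counting $\gamma\in\Gamma$ with $\|\gamma\|\le T$ and $\gamma v\in W$; as $\cO$ is Zariski dense in $V$ by [Bo] while $W$ is proper, the set $\cO\cap W$ is not Zariski dense in $V$, and the uniform counting and equidistribution results of \S 3 (in particular the error term coming from the spectral gap, i.e.\ the mean ergodic theorem with a rate) yield a power saving
\[
\bigl|\cO(T)\cap W\bigr|\ \ll\ T^{\,a-\delta}\,(\log T)^{b}
\]
for some $\delta=\delta(W)>0$. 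Comparing with $N_\cO(T)\sim c_1(\cO)\,T^{a}(\log T)^{b}$ gives $|\cO(T)\cap W|=o\bigl(N_\cO(T)/(\log T)^{t(f)}\bigr)$, which contradicts $R_r(T)\subseteq\cO(T)\cap W$ together with the lower bound for $|R_r(T)|$ above. Hence $R_r$ is Zariski dense in $V$, and therefore
\[
r_0(\cO,f)\ \le\ r\ \le\ \frac{9\,t(f)\,(1+\dim G)^2\cdot 2n_e(\Gamma)\deg(f)}{a}+1 ,
\]
as claimed. (Corollary 1.3 follows in exactly the same way from Theorem 1.2, with $a=n^2-n$ and $b=0$.)

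The passage from Theorem 1.7 is routine; the one genuine ingredient — and the step I expect to be the main obstacle — is the displayed bound on $|\cO(T)\cap W|$ for a proper subvariety $W$. Bare equidistribution of $\cO(T)$ in $V(\bR)$ only gives $|\cO(T)\cap W|=o(N_\cO(T))$, whereas the comparison here needs the stronger statement $o\bigl(N_\cO(T)/(\log T)^{t(f)}\bigr)$; producing the strict exponent drop $a-\delta$ is precisely where one must invoke the quantitative counting of \S 3 (the error term in the mean ergodic theorem) rather than mere equidistribution. Beyond this, only Theorem 1.7 and the asymptotics (1.9) are used.
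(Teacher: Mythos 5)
Your proposal follows the same high-level route as the paper: assume non-density, cut down to a proper Zariski-closed $W\subsetneq V$, bound $|\cO(T)\cap W|$ with a power saving, and compare with the lower bound from Theorem 1.7. That outline is correct, and you are right that the whole weight of the argument sits on the displayed estimate
\[
\bigl|\cO(T)\cap W\bigr|\ \ll\ T^{\,a-\delta}
\]
for some $\delta=\delta(W)>0$.

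The gap is that you do not actually prove this bound, and the attribution is off: the uniform counting results of \S 3 (Theorem 3.2 / Theorem 3.3) give counting with a rate for cosets of congruence subgroups $\Gamma(q)y$, not for intersections of $\cO$ with arbitrary proper subvarieties. Mere equidistribution with a rate on $G/\Gamma$ does not by itself see that $W$ is thin. In the paper this is exactly the content of Lemma 4.2, whose proof combines three further ingredients that your sketch omits: (i) factor a defining polynomial $h$ (not vanishing on $V$) into absolutely irreducible factors over a finite extension $E/\mathbb{Q}$; (ii) choose, via Chebotarev, a prime $p\asymp T^\alpha$ splitting completely in $E$; (iii) apply Lang--Weil over $\mathbb{F}_p$ to show $|\cO_p^h|/|\cO_p|\ll 1/p$, and \emph{then} apply the \S 3 count to the congruence condition $h(\gamma v)\equiv 0\ (\mathrm{mod}\ p)$, balancing the two terms to obtain $T^{a-\alpha}$ with $\alpha=\theta/(1+\dim G)^2$. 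Without the mod-$p$ reduction and Lang--Weil step, asserting that \S 3 ``yields'' the exponent drop is not substantiated. You flag this as the main obstacle, which is the right instinct; but the proposal stops short of the argument that closes it, so as written the key estimate is assumed rather than proved. (The extra factor $(\log T)^b$ in your bound is harmless and not needed.)
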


\noindent
\underline{\bf Remark 1.9}
\begin{namelist}{xxxxx}
\item[(i)] The integer $n_e (\Gamma )$ is at least 1 and is determined by 
the extent to which the representation spaces $L^2_0(G)/\Gamma(q))$ weakly contain non-tempered irreducible representations of $G=\mathbb{G}(\mathbb{R})$.  
Here $L^2_0(G/\Gamma)$ is the space of functions with zero integral, and $\Gamma(q)$ any congruence subgroup of $\Gamma$. 
The non-temperedness is measured by 
the infimum over all $p > 0$ for which the representation space contains a dense subspace 
of matrix coefficients belonging to $L^p(G)$. Thus $n_e(\Gamma)$  is directly
connected to the generalized Ramanujan Conjectures for 
$\mathbb{G} ( \mathbb{A} ) \slash \mathbb{G} ( \mathbb{Q})$ [Sa1].
\item[(ii)] Theorems 1.1, 1.2 and Corollary 1.3 are connected to the
general Theorems 1.6, 1.7 and 1.8 as follows; $\Gamma = SL_n ( \bZ)$
acts on $V_{n , m} ( \bZ)$ by left multiplication.  This action has
finitely many orbits.  So with $\mathbb{G} = SL_n \subset GL_M$, $M = n^2$ and
this action, $V_{n, m} = \mathbb{G} . v$ where $\det ( v ) = m$.  Theorem 1.1
then follows by applying Theorem 1.6 to each orbit separately.  For
Theorem 1.2, the difference between the individual orbit $\cO$ of $SL_n
( \bZ)$ and all of $V_{m , n} ( \bZ)$ raises the issue of the weak primitivity
of $f$ on $V_{m , n} ( \bZ)$.  So one needs to globalize the argument as
is explained in Section 4.
\end{namelist}

\medskip
The values of $r$ in Theorem 1.7 and Corollary 1.8 are by no means
optimal.  There are various places where the analysis can be modified to
give far better bounds.  Firstly by using smooth positive weights 
instead of the sharp cutoff counting function in (2.9) and Theorem 3.2,
we can improve the level of distribution $\tau$ in (4.15).  We carry
this out in Section 6 for the case that $\Gamma$ is co-compact in $G=\mathbb{G} (
\bR)$.  In Theorem 6.1 we obtain the sharpest possible remainder for
such smooth sums in terms of bounds towards the Ramanujan Conjectures.
This leads to the improvement in the value for $r$ that is given in 
(6.5).  A further improvement is gotten by using a weighted sieve ([H-R],
[D-H]) rather than the simple sieve from Section 2.  This leads to the
improved value for $r$ given in (6.15).  Finally there are cases
such as the following for which very strong bounds on the spectrum of
$L^2_0 (G  \slash \Gamma ( q ))$ are known and which result in quite
good values for $r_0$.  Let $D$ be a division algebra over $\bQ$ of
degree $n$ (which for technical reasons we assume is prime) and for
which $D \otimes \bR \cong \mbox{Mat}_{n \times n} ( \bR)$.  Then $D ( \bQ )$
has dimension $M = n^2$ over $\bQ$ and choosing a basis gives a
$\bZ$-structure for $D$, that is $M$ coordinates $x_{ij}$.  Consider 
 the reduced norm and let $\mathbb{G}$ be the linear algebraic group of
elements of reduced norm $1$.  Let $\Gamma = \mathbb{G} ( \bZ ) \subset GL_M$
where the action is by multiplication on the left.  Let $\cO = \Gamma
. v$ where $v \in D ( \bZ)$ has reduced norm $ m \ne 0$,  and let $f \in
\bQ [ x_{ij} ]$ be $\cO$ integral and weakly primitive.  Then all the improvements
mentioned above apply to the pair $( \cO , f )$ and Theorem 1.7 and
Corollary 1.8 apply with the following value for $r_0$ (see Theorem 6.3):

\begin{equation}
r_0 ( \cO , f ) \, \leq \, 6 \, \deg(f) \, + \, t (f)\, \log  t(f)  \, .
\end{equation}

\noindent
This bound is independent of dimension and it is of the same quality and
shape, in terms of dependence on the degree of $f$ and the number of its
irreducible factors, as what is known for $r$-almost primes for values
of $f ( x )$ in the classical case of one variable ([H-R]).

\medskip
Uniform bounds such as those in (1.10) are useful when combined with
$\bQ$-morphisms.  Let $\phi: \mathbb{G} \longrightarrow A^k$ be a $\bQ$-morphism
of affine varieties for which $\cO = \phi ( \mathbb{G} ( \bZ )) \subset \bZ^k$.
Then if $f \in \bQ [ x_1 , \dots , x_k]$ is $\cO$-integral and weakly primitive
then $\phi^\ast ( f ) = f \circ \phi$ is $\mathbb{G} ( \bZ)$-integral and
weakly primitive.  Moreover, $r_0 ( \cO , f ) \leq r_0 ( \mathbb{G} ( \bZ) , \phi^\ast (
f ))$.  If $\cO$ is part of a larger set of integral points that can be
swept out by varying $\phi$ suitably, then the uniformity allows one to
give bounds for saturation numbers for the larger set.  This of course
applies also with $\mathbb{G} = A^1$ in which case one can apply the classical
$1$-variable sieve.  For example, Corollary 1.3 can be approached by
this more elementary method.  Let $y \in V_{m , n} ( \bZ)$ and let
$\phi_y: A^1 \longrightarrow V_{m , n}$ be the morphism

\[
\phi_y: \; x \longrightarrow \; \left[
\begin{array}{llllll}
1 & 0 & 0 & 0 & x \\
& 1 &&&0 \\
&&&&0 \\
& 0&&&1
\end{array}
\right]
y \, .
\]

\noindent
Then $\cO = \phi_y ( \bZ) \subset V_{m , n} ( \bZ)$.  Apply the
classical $1$-variable results about almost primes to the pair $( \bZ,
\phi^\ast ( f ) )$.  For a generic $y$ the bound for $r_0$ depends only
on $t$ and $d$ and the set of such $y$'s is Zariski dense in $V_{m ,
n}$.  In this way one can establish Corollary 1.3 with $r_0$ comparable to (1.10) above. This approach is possible whenever $G$ has $\bQ$-unipotent elements.  
However, in the opposite case when $\mathbb{G} ( \bR) / \mathbb{G} (
\bZ)$ is compact, such as in the division algebra examples above, there
are no such $\mathbb{Q}$-rational parametric affine curves in $G=\mathbb{G}(\mathbb{R})$ and the general
affine linear sieve developed in this paper is the only approach that we
know of to obtain Corollary 1.8.  In [L-S] this technique is developed for
anisotropic quadratics in 3-variables.

\medskip
\noindent
\underline{\large \sf {Section 2. \ The Combinatorial Sieve}}

To begin with we make use of a simple version of the combinatorial
sieve, see [IK, \S 6.1-6.4] and [H-R Theorem 7.4].  Later we will use
more sophisticated versions.  Our formulation is tailored to the
applications.

\medskip
Let $\mathcal{A} \, = \, (a_k)_{k \geq 1}$ be a finite sequence of nonnegative
numbers.  Denote by $X$ the sum

\setcounter{section}{2}
\setcounter{equation}{0}
\begin{equation}
\displaystyle{\sum\limits_{k}} a_k \, = \, X \, .
\end{equation}
We think of $X$ as large, tending to infinity as the number of elements
of $\mathcal{A}$ increases.  Fix a finite set $B$ of ``ramified'' primes which for
the most part will be the empty set.  For $z$ a large parameter (in
applications $z$ will be a small power of $X$) set
\begin{equation}
P \, = \, P_{z , B} \, = \, 
\displaystyle \prod\limits_{p \, \leq \, z
\atop{p \, \notin \, B}} \, p \, .
\end{equation}
Under suitable assumptions about the sums of the $a_k$'s in progressions
with moderately large moduli, the sieve gives upper and lower
estimates which are of the same order of magnitude, for the sums of $\mathcal{A}$
over $k$'s which remain after sifting out numbers with prime factors in
$P$.

\medskip
More precisely let
\begin{equation}
S ( \mathcal{A} , P ) \, := \, \displaystyle{\sum\limits_{(k , P ) = 1}} \, a_k \,
.
\end{equation}
The assumptions for the sums over progressions that we make are as
follows: 
\begin{enumerate}
\item[(A$_0$)] For $d$ square free and having no prime factors in $B$,
$(d < X )$, we assume that the sum over multiples of $d$ takes the form
\begin{equation}
\displaystyle{\sum\limits_{k \, \equiv \, 0 ( d )}} \, a_k \, = \,
\frac{\rho ( d )}{d} \, X \, + \, R ( \mathcal{A} , d )
\end{equation}
where $\rho ( d )$ is multiplicative in $d$ and for $p \notin B$ 
\begin{equation}
0 \, \leq \, \frac{\rho ( p )}{p } \, \leq \, 1 \, - \, \frac{1}{c_1} \,
.
\end{equation}
The understanding being that $\frac{\rho ( d )}{d} X$ is the main term
in (2.4) and $R ( \mathcal{A} , d )$ is smaller, at least on average.
\item[(A$_1$)] $\mathcal{A}$ has level distribution $D = D ( X )$, that is for
some $\epsilon > 0$ there is $C_\epsilon < \infty$ such that 
\begin{equation}
\displaystyle{\sum\limits_{d \, \leq \, D}} \, | R ( d , \mathcal{A} ) | \, \leq
\, C_\epsilon \, X^{1 - \epsilon} \, .
\end{equation}
If this holds with $D = X^\tau$ we say that the level distribution is
$\tau$.
\item[(A$_2$)] $\mathcal{A}$ has sieve dimension $t$, that is there is
$C_2$ fixed such that 
\begin{equation}
- C_2 \, \leq \, \displaystyle{\sum\limits_{(p , B ) \, = 1 \atop{w \, \leq
  \, p \, \leq \, z}}} \; \frac{\rho ( p ) \, \log p}{p} \, - t \, \log
\frac{z}{w} \, \leq \, C_2\,\,,\,\,\,\, \hspace{.5em} \mbox{for} \hspace{.5em} 2 \,
\leq \, w \, \leq \, z \, .
\end{equation} 
\end{enumerate}

Assuming (A$_0$), (A$_1$) and (A$_2$) the simple combinatorial sieve
that we use asserts that for $s > 9t$ and $z = D^{1/s}$ and $X$ large
enough 
\begin{equation}
\frac{X}{( \log X )^t} \, \ll \, S ( \mathcal{A}, P )  \, \ll \, 
\frac{X}{ ( \log X )^t}
\end{equation}
where the implied constants depend explicitly on $t , \epsilon, C_1 ,
C_2$ and $C_\epsilon$ (all of which are fixed).

For our application $V \subset A^n$ is a principal homogeneous
space for $G$ and $\mathcal{O} = G ( \mathbb{Z} ) . v$ is an orbit
of integral points in $V$.  $f \in \mathbb{Q} [ x_1 , \cdots x_n]$ 
is integral on $\mathcal{O}$ and $\parallel \ \parallel$ is the norm
described in Section 1.  For $k \geq 0$, we let $a_k = a_k ( T )$ be given
by 
\begin{equation}
a_k ( T ) \, : = \, \displaystyle{\sum\limits_{\parallel \gamma
\parallel \, \leq \, T \atop{| f ( \gamma v ) | \, = \, k }}} 1\,\,\,\, .
\end{equation}

Under the assumptions (2.4), (2.6) and (2.7) on the level distribution
it follows that 
\begin{equation}
a_0 ( T ) \, \ll \, \frac{1}{D} \, X \log X
\end{equation}
where $D$ is the level.  Hence for our purposes we may include $k = 0$
into the sieve analysis without affecting (2.8).

A large part of the paper is concerned with verifying (A$_0$) (A$_1$)
and (A$_2$) for this sequence and determining an admissible level of
distribution.

\medskip
\noindent
\underline{\large \sf Section 3. \ Uniform Lattice Point Count}

In this and the next section we will identify the main term in the
asymptotics of\break ${\sum_{k \equiv 0 ( d )}}$ $ a_k ( T )$
(condition (A$_0$)), as well as estimate the level of distribution $D$
(condition (A$_1$)).  In \S4.1 we will establish the multiplicativity of
$\rho ( d ) \slash d$, the coefficient of the main term, concluding the
proof of (A$_0$) and (A$_1$).  The most demanding part is establishing
an explicit bound on the error terms ${\sum_{d \leq
D}} \, | R( d , \mathcal{A} )|$ appearing in condition (A$_1$).  Here the basic
ingredient will be an error estimate for the lattice points counting
problem which is uniform over all cosets of all congruence groups.  This
will be established in \S3.2 and \S3.3. 

\medskip
\noindent
\underline{\sf 3.1 \ Spectral estimates}

Let $\mathbb{G} \subset GL_n ( \mathbb{C} )$ be a connected semisimple
algebraic matrix group defined over $\mathbb{Q}$, with $G = \mathbb{G} (
\mathbb{R})$ having no non-trivial compact factors.  Fix any norm on the
linear space $M_n ( \mathbb{R})$.  Let $\Gamma ( 1 ) = \mathbb{G} (
\mathbb{Z})$ be the group of integral points, which is a lattice
subgroup of $G$ [B-HC] and a subgroup of $GL_n ( \mathbb{Z})$.  Let
$\Gamma ( q ) , q \in \mathbb{N}$ denote the principal congruence
subgroup of level $q$, namely
\[
\Gamma ( q ) \, = \, \{ \gamma \in \Gamma; \, \gamma \, \equiv \, I
(\!\!\!\!\!\!\mod
q ) \} \, .
\]

We begin by stating the following volume asymptotic, established in  [G-W], [Ma] . 

\setcounter{section}{3}
\setcounter{Thm}{0}
\begin{Thm}   
Let $\mathbb{G}$ and $G$ be as above and let $ \parallel \, \parallel$ be any norm on $M_n(\mathbb{R})$.
Then there exists $a > 0$ and a non-negative integer $b$ both depending only on
$G$, and a constant $B$ depending also on the norm, such that 
\[
\lim\limits_{T \rightarrow \infty} \; \frac{{\rm vol} \{ g \in G ; \,
\parallel g \parallel \, \leq \, T \}}{B T^a ( \log T )^b} \, = \, 1 \,
.
\]
\end{Thm}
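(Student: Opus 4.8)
The plan is to reduce the volume to an explicit integral over a positive Weyl chamber via the Cartan (polar) decomposition, and then to extract the asymptotics by Laplace's method. Fix a maximal compact subgroup $K\subset G$, a maximal $\mathbb{R}$-split torus $A$, and a positive chamber $\mathfrak{a}^{+}$ in its Lie algebra $\mathfrak{a}$, with closure $\overline{\mathfrak{a}^{+}}$. Every $g\in G$ is $g=k_{1}\exp(H)k_{2}$ with $k_{i}\in K$, $H\in\overline{\mathfrak{a}^{+}}$, and Haar measure is $dg=\xi(H)\,dk_{1}\,dH\,dk_{2}$ with $\xi(H)=\prod_{\alpha\in\Sigma^{+}}\bigl(\sinh\langle\alpha,H\rangle\bigr)^{m_{\alpha}}$, so that $\xi(H)=2^{-|\Sigma^{+}|}e^{\langle 2\rho,H\rangle}\bigl(1+o(1)\bigr)$ deep inside $\mathfrak{a}^{+}$, where $2\rho=\sum_{\alpha\in\Sigma^{+}}m_{\alpha}\alpha$. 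First I would record the elementary norm comparison: since $K$ is compact, $\|k_{1}\exp(H)k_{2}\|\asymp\|\exp(H)\|$ uniformly in $(k_{1},k_{2})$, and, choosing $A$ diagonal in the given matrix model, $\log\|\exp(H)\|=\chi(H)+O(1)$ where $\chi(H):=\max_{\lambda}\lambda(H)$ is the maximum over the weights $\lambda$ of the defining representation $\mathbb{G}\hookrightarrow GL_{n}$. Because that representation is faithful (hence so is its differential) and the sum of its weights vanishes, $\chi$ is strictly positive on $\overline{\mathfrak{a}^{+}}\setminus\{0\}$; consequently $\{g:\|g\|\le T\}$ is compact, and in polar coordinates it is, up to a bounded shift of $H$ depending on $(k_{1},k_{2})$ and on the norm, the slab $\{H\in\overline{\mathfrak{a}^{+}}:\chi(H)\le\log T\}$.

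Next I would run the asymptotic analysis. Writing the volume as $\int_{K\times K}\!\int_{\overline{\mathfrak{a}^{+}}}\mathbf{1}\bigl[\|k_{1}\exp(H)k_{2}\|\le T\bigr]\,\xi(H)\,dH\,dk_{1}\,dk_{2}$ and rescaling $H=(\log T)H'$, the region of integration converges to the polytope $P=\{H'\in\overline{\mathfrak{a}^{+}}:\lambda(H')\le 1\text{ for every weight }\lambda\}$, which is compact with nonempty interior, while $\xi\bigl((\log T)H'\bigr)$ behaves like $e^{(\log T)\langle 2\rho,H'\rangle}$ up to the $\sinh$--versus--exponential discrepancy along the chamber walls. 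One is thus left with a Laplace integral $\int_{P}e^{(\log T)\langle 2\rho,H'\rangle}dH'$ for a linear phase over a polytope. Its exponential rate is $a:=\max_{H'\in P}\langle 2\rho,H'\rangle$, a linear program; since $2\rho>0$ on $\overline{\mathfrak{a}^{+}}\setminus\{0\}$ we get $a>0$, and the maximum is attained on a face $\mathcal{F}$ of $P$, meeting $\{\chi=1\}$, of some dimension $b\ge 0$. Standard Laplace asymptotics then give the inner integral $\sim c(k_{1},k_{2})\,T^{a}(\log T)^{b}$, with $a$ and $b$ read off from the restricted root system, the root multiplicities, and the weights of $\mathbb{G}\hookrightarrow GL_{n}$ — hence depending only on $G$ as an algebraic matrix group. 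The walls affect only the local constant $c(k_{1},k_{2})$, since the vanishing of the $\sinh$'s there is confined to a neighborhood of exponentially small measure. Integrating $c(k_{1},k_{2})$ over the compact group $K\times K$ produces $B=\mathrm{vol}(K)^{2}\int_{K\times K}c(k_{1},k_{2})$, and the norm enters $B$ (and nothing else) through the asymptotic shift $c(k_{1},k_{2},u)=\lim_{t\to\infty}\bigl(\log\|k_{1}\exp(tu)k_{2}\|-t\,\chi(u)\bigr)$, $u\in\overline{\mathfrak{a}^{+}}$.

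The bookkeeping for the polar decomposition and the Laplace estimate for a linear phase over a polytope are routine. The genuine work lies in two points. First, one must show that $\log\|k_{1}\exp(H)k_{2}\|=\chi(H)+c(k_{1},k_{2},H/|H|)+o(1)$ as $H\to\infty$, with the limit existing and depending measurably — indeed continuously off a set of measure zero — on $(k_{1},k_{2})$; this reduces to controlling the highest-weight component of $k_{1}\cdot(\,\cdot\,)\cdot k_{2}$ in a weight-adapted basis, whose leading coefficient can degenerate only on a negligible subset of $K\times K$, so the outer integral is unaffected. Second, one must upgrade the resulting comparisons to a genuine asymptotic, which requires the Laplace analysis to be uniform over $(k_{1},k_{2})$ and requires controlling the chamber walls, where $\xi$ is genuinely smaller than $e^{\langle 2\rho,\cdot\rangle}$; the upper bound there is immediate from $\sinh x<\tfrac12 e^{x}$, and for the matching lower bound one uses that the exponential weight dominates the thin wall neighborhood. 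I expect this uniform assembly — in particular pinning down the exact constant $B$ rather than merely the exponents $a$ and $b$ — to be the main obstacle; it is precisely what separates an $\asymp$ statement from the claimed $\lim=1$.
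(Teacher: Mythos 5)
First, a point of reference: the paper does not prove Theorem~3.1.  It is cited as established in [G-W] and [Ma], after which the paper only records the algebraic characterizations of the exponents --- $a$ is the positive real with $2\rho_G/a\in\partial\mathcal{C}$, and $b+1$ is the codimension in $\mathcal{C}$ of the minimal face containing $2\rho_G/a$.  Your Cartan-plus-Laplace outline is indeed the method those sources use, and your identifications of the exponents agree with theirs: $\max_{H'\in P}\langle 2\rho,H'\rangle$ over $P=\mathcal{C}^\circ\cap\overline{\mathfrak{a}^+}$ is the gauge of $2\rho$ with respect to $\mathcal{C}$, hence equals the paper's $a$; and by polytope duality the maximizing face of $\mathcal{C}^\circ$ (which coincides, up to the Weyl action, with your $\mathcal{F}$) has dimension $\mathrm{codim}_{\mathcal{C}}(\text{dual face})-1$, matching the paper's $b$.

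The step that does not hold as written is your dismissal of the chamber walls.  You assert that ``the vanishing of the $\sinh$'s there is confined to a neighborhood of exponentially small measure.''  That is false: the region where $\sinh\langle\alpha,H\rangle$ departs from $\tfrac12 e^{\langle\alpha,H\rangle}$ is the slab $\{\langle\alpha,H\rangle=O(1)\}$, of thickness $O(1)$ in $H$ and $O(1/\log T)$ after your rescaling --- not exponentially small.  Worse, this slab need not be disjoint from the region carrying the Laplace mass: for $G=SL_n$ with the defining representation (precisely the case driving Theorems~1.1--1.4), the maximizer of $\langle 2\rho,\cdot\rangle$ over $P$ is $H'_0=(1,\dots,1,-(n-1))$, which for $n\ge 3$ sits on the $n-2$ walls $H_1=\cdots=H_{n-1}$, so the spherical density $\xi$ degenerates at the very point where the phase is maximal.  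One can still recover $T^a(\log T)^b$ because $\langle 2\rho,\cdot\rangle$ decays exponentially transverse to the maximizing face and the one-sided integrals $\int_0^\infty \sinh(v)^{m}e^{-cv}\,dv$ still converge, so only the constant changes --- but carrying this out uniformly in directions, including the set where the $(k_1,k_2)$-shift degenerates, is the substance of the cited references, not routine bookkeeping.  Your final paragraph correctly flags this uniformity as the main obstacle; the middle paragraph's claim that the walls affect only the constant ``since'' the bad set is exponentially small is the one link in the outline that is actually wrong and needs to be replaced by the transverse-decay argument just described.
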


The exponent $a$ has the following simple algebraic
description [G-W][Ma].  Let $A$ denote a maximally $\mathbb{R}$-split
Cartan subgroup of $G = \mathbb{G} ( \mathbb{R})$, with Lie algebra
$\frak{a}$.  Let $\mathcal{C}$ denote the convex hull of the weights of
$\frak{a}$ associated with the representation of $G$ in $GL_n (
\mathbb{R})$.  Let $2 \rho_G$ denote the sum of all the positive roots
(counted with multiplicities) of $\frak{a}$.  Then $a$ is the unique
positive real number with the property that $2 \rho_G \slash a \in
\partial \mathcal{C}$.  The parameter $b + 1$ is a positive integer,
which is at most the $\mathbb{R}$-rank $G$, and is equal to the codimension
of a minimal face of the polyhedron $\mathcal{C}$ containing the point
$2 \rho_G \slash a$. In particular, both $a$ and $b$ are independent of the norm. 

We now state the following uniform remainder estimate for the lattice
point counting problem, which underlies our estimate of the level of
distribution.  
\begin{Thm}
Let $\mathbb{G}, G, \Gamma (q)$ and $\parallel  \parallel$ be as
above and  normalize Haar measure on $G$ so that vol$(G \slash \Gamma ( 1
)) = 1$.  Then the following uniform error estimate holds (for any $\eta > 0$) 
\[
\frac{\# \{ w \in \Gamma ( q ) y ; \parallel w \parallel \leq T \}}{{\rm
vol} \{ g \in G; \parallel g \parallel \leq T \} } \, = \, \frac{1}{[
\Gamma : \Gamma ( q ) ]} \, + \, {O}_\eta \left(
T^{- \frac{\theta}{1 + {\rm dim} G} \, + \, \eta} \right) \, ,
\]
where
\begin{itemize}
\item $\theta =\frac{a}{2n_e(G,\Gamma)}> 0$ is explicit and depends only on the bounds towards
the generalized Ramanujan conjecture for the homogeneous spaces $G
\slash \Gamma (q)$ (see Theorem 3.3 below),
\item the estimate holds uniformly over
all cosets of all congruence subgroups $\Gamma ( q )$ in $\Gamma ( 1 )$, namely 
the implied constant is independent of $q$ and $y \in \Gamma (1)$ (it depends only on $\eta$ and the chosen norm).
\end{itemize}
\end{Thm}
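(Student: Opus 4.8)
The plan is to reduce the uniform lattice-point count to an equidistribution statement for the action of $G$ on the homogeneous spaces $G/\Gamma(q)$, and then apply spectral estimates. First I would fix $q$ and a coset $\Gamma(q)y$, and count $\#\{w \in \Gamma(q)y : \|w\| \le T\}$ by the standard unfolding trick: pick a small bump function $\psi_\epsilon$ supported near the identity in $G$, normalized to have integral $1$, form the automorphic function $F_T(g) = \sum_{w \in \Gamma(q)y} \mathbf{1}_{B_T}(g^{-1}w)$ on $G/\Gamma(q)$, and smooth it by convolving with $\psi_\epsilon$. The main term comes from the projection onto the constant function, which by the normalization $\mathrm{vol}(G/\Gamma(1))=1$ is $\frac{1}{[\Gamma:\Gamma(q)]}\mathrm{vol}(B_T)$; the error is controlled by the norm of $F_T$ in the orthogonal complement $L^2_0(G/\Gamma(q))$, together with the "thickening" error of size roughly $\mathrm{vol}(\partial B_T \text{-collar of width } \epsilon) \approx \epsilon \cdot \mathrm{vol}(B_T)$, which by Theorem 3.1 and the regularity of the norm balls is $O(\epsilon \, T^a (\log T)^b)$.

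The heart of the argument is bounding the $L^2_0$-part. Here I would invoke the quantitative mean ergodic theorem / decay-of-matrix-coefficients input that underlies Theorem 3.3: if $L^2_0(G/\Gamma(q))$ has a spectral gap quantified by the exponent $n_e(G,\Gamma)$ — meaning matrix coefficients of the relevant representations lie in $L^{2n_e + \delta}(G)$ for all $\delta > 0$, uniformly in $q$ — then $\langle \pi(\psi_\epsilon \ast \mathbf{1}_{B_T}) v, v\rangle$ on the orthogonal complement decays like $\mathrm{vol}(B_T)^{1 - 1/(2n_e)}$ up to $\epsilon$-dependent and $\eta$-losses, i.e. like $T^{a(1 - 1/(2n_e)) + \eta'}$. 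Crucially this bound is uniform over all $q$ because the Ramanujan-type bound on $n_e(G,\Gamma)$ is uniform over congruence subgroups (this is exactly where property $(\tau)$ / the known approximations to Ramanujan enter). Combining the two error sources gives a total error of order
\[
\epsilon \, T^a (\log T)^b \; + \; \epsilon^{-\kappa} \, T^{a - \frac{a}{2n_e} + \eta'}
\]
for some fixed positive power $\kappa$ governed by the Sobolev norm of the bump function $\psi_\epsilon$ in $\dim G$ variables, which is why $\dim G$ appears.

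Then I would optimize in $\epsilon$: choosing $\epsilon = T^{-\alpha}$ balances the two terms when $\alpha = \frac{a}{2n_e(1+\kappa)}$, and with $\kappa$ essentially $\dim G$ (the number of parameters needed to mollify in $G$) this yields an error exponent of the shape $a - \frac{\theta}{1+\dim G} + \eta$ with $\theta = \frac{a}{2n_e(G,\Gamma)}$, matching the statement after dividing through by $\mathrm{vol}(B_T) \asymp T^a(\log T)^b$. The main obstacle I anticipate is making the dependence on $q$ genuinely uniform and tracking the Sobolev exponent $\kappa$ honestly: one must ensure that the bump functions $\psi_\epsilon$ and the resulting implied constants do not secretly depend on $[\Gamma:\Gamma(q)]$, and that the decay estimate for matrix coefficients is applied in a form valid uniformly across the whole congruence tower. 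A secondary technical point is handling the fact that the norm balls $B_T$ need not be bi-$K$-invariant and that the asymptotic in Theorem 3.1 carries a $(\log T)^b$ factor when $b>0$; one needs the collar estimate and the Sobolev-norm estimate for $\mathbf{1}_{B_T}$ to be robust under these irregularities, which is routine but must be done with care. I expect the argument to be carried out in \S3.2 and \S3.3 exactly along these lines.
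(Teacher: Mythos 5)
Your overall strategy is the right one, and it is indeed what the paper carries out in \S 3.2--3.3: introduce an $\epsilon$-bump, relate the sharp count to a smoothed object that the quantitative mean ergodic theorem (Theorem 3.3) controls, invoke the uniform-in-$q$ spectral gap with exponent $n_e(G,\Gamma)$, and then balance the $\epsilon$-thickening error against the spectral error to land on the exponent $\theta/(1+\dim G)$. Your anticipations of the two obstacles (uniformity across the congruence tower, and the non-smooth, non-bi-$K$-invariant balls) are also exactly the issues the paper addresses via strong approximation/property $(\tau)$ and via the admissibility estimates of Proposition 3.4.

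Where you differ from the paper, and where your sketch is not quite right as stated, is in how the pointwise lattice-point count is extracted from the $L^2$ spectral bound. You propose to bound "the norm of $F_T$ in the orthogonal complement $L^2_0(G/\Gamma(q))$" and then pass to a pointwise bound via a Sobolev norm of $\psi_\epsilon$, attributing $\kappa = \dim G$ to this. But the mean ergodic theorem bounds $\|\pi(\beta_t) v\|_{L^2_0}$ for a \emph{fixed} $v \in L^2_0$; it does not directly bound $\|P_0 F_T\|_{L^2_0}$ for the orbit-sum $F_T$ you define, which is not of the form $\pi(\beta_t) v$. The paper's route is different: it introduces the orbit bump $\phi^y_\epsilon(g\Gamma(q)) = \sum_{\gamma\in\Gamma(q)} \chi_\epsilon(g\gamma y)$, proves a sandwich inequality (Lemma 3.5, driven by the admissibility $O_\epsilon B_t O_\epsilon \subset B_{t+c\epsilon}$ of Proposition 3.4) showing that for any $h \in O_\epsilon$ the count $|B_t \cap \Gamma(q)y|$ is pinched between $\int_{B_{t\mp c\epsilon}} \phi^y_\epsilon(g^{-1}h\Gamma(q))\,d\mathrm{vol}(g)$, and then applies Theorem 3.3 to $\pi(\beta_t)\phi^y_\epsilon$. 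The pointwise step is then a Chebyshev argument, not a Sobolev embedding: the set of $h$ where $|\pi(\beta_t)\phi^y_\epsilon(h) - [\Gamma:\Gamma(q)]^{-1}| > \delta$ has $m_{G/\Gamma(q)}$-measure at most $C^2\delta^{-2}e^{-2\theta' t}\|\phi^y_\epsilon\|_{L^2(m)}^2 \approx C^2\delta^{-2}e^{-2\theta' t}\epsilon^{-\dim G}/[\Gamma:\Gamma(q)]$, which is compared against $m_{G/\Gamma(q)}(O_\epsilon\Gamma(q)) \approx \epsilon^{\dim G}/[\Gamma:\Gamma(q)]$ to find a good $h_t \in O_\epsilon$. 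Setting $\delta = c\epsilon$ and balancing $\delta^{-2}\epsilon^{-2\dim G}e^{-2\theta' t} \lesssim 1$ gives $\epsilon \approx e^{-\theta' t/(1+\dim G)}$. This is where $1+\dim G$ really comes from --- one factor of $\epsilon^{-\dim G}$ from $\|\phi^y_\epsilon\|_{L^2}^2$, another $\epsilon^{\dim G}$ from $\mathrm{vol}(O_\epsilon)$, and the extra $+1$ from the constraint $\delta = c\epsilon$ --- and, crucially, the $[\Gamma:\Gamma(q)]$ factors cancel identically, so the uniformity in $q$ is automatic. A two-bump Cauchy--Schwarz argument in the style of Duke--Rudnick--Sarnak would also yield $\kappa = \dim G$, but your single-bump Sobolev heuristic neither matches that nor the paper's Chebyshev route, and if written out carefully it must be checked that no stray $\sqrt{[\Gamma:\Gamma(q)]}$ survives; the paper's positivity-plus-Chebyshev mechanism handles this cleanly and without any smoothness hypothesis on $B_t$ beyond admissibility.
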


A general approach to the lattice point counting
problem with error estimate for $S$-algebraic groups is developed in
[GN1], based on establishing quantitative mean ergodic theorems for the Haar-uniform averages supported on the norm balls. In \S\S3.2-3.3 we follow this approach and  give a short proof of Theorem 3.2, thus establishing the uniformity of the error
term, which is crucial for our considerations.  More general results can
be found in [GN2].

Let $B_t \subset G , t \in \mathbb{R}_+$ denote the family of subsets
\[
B_t \, = \, \{ g \in G: \, \parallel g \parallel \, \leq \, e^t \} \, ,
\]
and let $\pi_{G \slash \Gamma} ( \beta_t)$ denote the following
averaging operator acting in $L^2 ( G \slash \Gamma )$:
\[
\pi_{G \slash \Gamma}(\beta_t) \, f ( x ) \, = \,
\displaystyle{\int\limits_{g \in B_t}} \, f ( g^{-1}x ) \, dm_{G } \, .
\]
The subspace $L^2_0 ( G \slash \Gamma )$ of functions of zero mean is
obviously an invariant subspace, and the representation there is denoted
by $\pi^0_{G \slash \Gamma}$.

The fundamental spectral estimate in our discussion is given by
\begin{Thm}
Let $\mathbb{G}, G, \Gamma(q)$ and $B_t$ be as above.  Then for  
$\theta =\frac{a}{2n_e(G,\Gamma)} > 0$, uniformly for every $q \in \mathbb{N}$

\[
\Big\| \pi_{G \slash \Gamma ( q )} ( \beta_t ) f \, - \, 
\int\limits_{G \slash \Gamma ( q )} \, 
f d m_{G \slash \Gamma ( q ))} 
\Big\|_{L^2 ( G \slash \Gamma ( q ))} \, \leq \, C_\eta
e^{- (\theta-\eta) t } \, 
\parallel f \parallel_{L^2 ( G \slash \Gamma ( q))}
\]
where $m_{G \slash \Gamma ( q)}$ is the $G$-invariant probability measure
on $G \slash \Gamma ( q )$.  Here $\eta > 0$ is arbitrary,  $a$ is the exponent in the rate of exponential volume growth
of $B_t$, and $n_e ( G , \Gamma)$ is the least even integer $\ge p(G,\Gamma)/2$, with $p(G,\Gamma)$ the bound towards the Ramanujan conjecture described in the proof. 
\end{Thm}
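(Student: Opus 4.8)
The plan is to deduce Theorem 3.3 from the abstract quantitative mean ergodic theorem for Haar-uniform averages on norm balls established in [GN1], combined with the known bounds toward the generalized Ramanujan conjecture for the congruence tower $G/\Gamma(q)$. The key point is that the representation of $G$ on $L^2_0(G/\Gamma(q))$ weakly contains only representations which are $p(G,\Gamma)$-integrable (up to $\varepsilon$), uniformly in $q$; this uniformity is precisely the content of the property $(\tau)$ / uniform spectral gap for congruence subgroups, which follows from the work on automorphic forms cited in the introduction (see [Cl], [Sa1]) together with the uniform bound $p(G,\Gamma)$ described in the proof. So the first step is to record that for a suitable even integer $2m = n_e(G,\Gamma)$, the $2m$-th tensor power of every automorphic representation occurring in $L^2_0(G/\Gamma(q))$ is weakly contained in the regular representation of $G$, uniformly over $q$.

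Second, I would invoke the general principle (as in [GN1], and going back to the method of spectral transfer via the $K$-finite matrix coefficients) that if a unitary representation $\pi$ of $G$ has the property that $\pi^{\otimes 2m}$ is weakly contained in $L^2(G)$, then the operator norm of the averaging operator $\pi(\beta_t)$ restricted to the orthogonal complement of the invariants decays like $\|\beta_t\|^{1/2m}$ in a suitable sense, where $\beta_t$ is the normalized Haar-uniform measure on $B_t$. Combining this with the volume asymptotic of Theorem 3.1, namely $\mathrm{vol}(B_t) \asymp e^{at}t^b$, one gets a norm bound of the shape $e^{-(a/2m - \eta)t}$ for any $\eta>0$, where the $t^b$ polynomial factor is absorbed into the $\eta$. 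Setting $\theta = a/(2n_e(G,\Gamma))$ then yields the stated inequality. The uniformity in $q$ is inherited directly from the uniformity of the spectral gap, since the constant $C_\eta$ in the decay estimate depends only on $G$, the norm, $\eta$, and the integrability exponent $p(G,\Gamma)$ — none of which depend on $q$.

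The precise mechanism for the decay estimate deserves a word: one writes $\beta_t$ as an average and uses the fact that $\|\pi(\mu)\|_{L^2_0}^{2m} \le \|\pi^{\otimes 2m}(\mu)\| \le \|\lambda_G(|\mu|^{\ast}\ast|\mu|)\|^{m}$ style inequalities — more precisely, one bounds $\|\pi(\beta_t)\xi\|^{2m}$ by a matrix coefficient of $\pi^{\otimes 2m}$ against a product vector, which by weak containment in $L^2(G)$ is dominated by a convolution of $\beta_t$ against itself integrated over $G$. That last quantity is $O(\mathrm{vol}(B_t)^{-1})$ times a polynomial, by the volume asymptotic and a standard estimate on the overlap $\mathrm{vol}(B_t \cap gB_t)$. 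Taking $2m$-th roots gives $\|\pi^0_{G/\Gamma(q)}(\beta_t)\| \ll_\eta e^{-(a/2m)t + \eta t}$, which is the assertion of the theorem.

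The main obstacle I anticipate is not the functional-analytic transfer argument — that is by now standard — but rather pinning down and justifying the uniform integrability exponent $p(G,\Gamma)$ and hence the even integer $n_e(G,\Gamma)$, with the constant genuinely independent of $q$. This requires citing the known approximations to the generalized Ramanujan conjectures (property $(\tau)$ for the relevant arithmetic group, uniform over the congruence tower), and being careful that the exceptional spectrum — including any residual or non-tempered automorphic representations appearing for small $q$ — is still controlled by the same exponent. Everything else (the volume input from Theorem 3.1, the overlap estimate, the tensor-power trick) is routine given [GN1], so I would state those steps briefly and concentrate the exposition on identifying $p(G,\Gamma)$ and verifying that the resulting bound is uniform in $q$.
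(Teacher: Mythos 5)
Your overall architecture matches the paper's: first extract a uniform integrability exponent $p(G,\Gamma)$ for the $K$-finite matrix coefficients of $L^2_0(G/\Gamma(q))$ from the known approximations to Ramanujan, then use a tensor-power argument to transfer to the regular representation $\lambda_G$, and finally estimate the decay of $\lambda_G(\beta_t)$ using the volume asymptotic of Theorem 3.1. That is exactly how the paper proceeds, citing [N1, Thm.\ 1] for the transfer $\|\pi^0_{G/\Gamma}(\beta_t)\|\le\|\lambda_G(\beta_t)\|^{1/n_e}$ and [N1, Thm.\ 4] for the last step. However, there is a genuine gap in the way you carry out that last step, and a bookkeeping error in the tensor powers.

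The bookkeeping error: you set $2m=n_e$ and then write $\|\pi(\beta_t)\xi\|^{2m}$ as a matrix coefficient of $\pi^{\otimes 2m}$. In fact $\|\pi(\beta_t)\xi\|^{2m}=\langle\pi^{\otimes m}(\beta_t^{\ast}\ast\beta_t)\,\xi^{\otimes m},\xi^{\otimes m}\rangle$ is a matrix coefficient of $\pi^{\otimes m}$, not $\pi^{\otimes 2m}$. With $m=n_e/2$, the tensor power $\pi^{\otimes m}$ need not have $L^{2+\eta}$ matrix coefficients (that requires $2m\ge p$, i.e.\ $m\ge p/2$, which forces $m=n_e$). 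So your chain of inequalities would either use weak containment where it is not available, or, if corrected to $\pi^{\otimes n_e}$, would give $\|\pi(\beta_t)\|^{2n_e}\le\|\lambda_G(\beta_t^{\ast}\ast\beta_t)\|=\|\lambda_G(\beta_t)\|^2$, i.e.\ $\|\pi(\beta_t)\|\le\|\lambda_G(\beta_t)\|^{1/n_e}$ — the correct version of [N1, Thm.\ 1], with a different power than the one you announce.

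The more serious gap is your claim that $\|\lambda_G(\beta_t^\ast\ast\beta_t)\|$ is $O(\mathrm{vol}(B_t)^{-1})$ ``by the volume asymptotic and a standard estimate on the overlap $\mathrm{vol}(B_t\cap gB_t)$.'' The overlap estimate gives a pointwise (sup-norm) bound $\|\beta_t^{\ast}\ast\beta_t\|_{L^\infty}\ll\mathrm{vol}(B_t)^{-1}$, but the $L^\infty$ norm of a convolution kernel of total mass $1$ does not control its $L^2$-operator norm: absent further structure, $\|\lambda_G(h)\|\le\|h\|_{L^1}=1$ is all one gets. The actual ingredient the paper uses is the Kunze--Stein phenomenon, which for semisimple $G$ with finite center gives $\|\lambda_G(h)\|\ll_\eta\|h\|_{L^{2-\eta}}$, and applying this directly to $\beta_t$ (not to $\beta_t^{\ast}\ast\beta_t$, which is lossy by a further factor of two) yields $\|\lambda_G(\beta_t)\|\ll_\eta\mathrm{vol}(B_t)^{-1/2+\eta}$. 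Combined with $\mathrm{vol}(B_t)\asymp e^{at}t^b$ and the exponent $1/n_e$, this gives $\theta=a/(2n_e)$ as stated. Without Kunze--Stein your argument does not close, and with your value $2m=n_e$ it would, if it worked, even give $\theta=a/n_e$, twice the theorem's exponent — a sign the exponent accounting has gone wrong. Finally, the paper's first part is more substantive than you indicate: it treats separately the almost-simple $\mathbb{Q}$-factors with property $T$ (where $p$ is independent of $\Gamma$, via [Co], [Li], [LZ], [Oh], [Lo-S]) and the $\mathbb{Q}$-rank-one factors without property $T$ (where [B-S] and [Cl] are invoked, with the explicit bound $p=8\rho_H$ coming from the constraint $s\le\rho_H-\tfrac14$ on complementary series parameters). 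You flag this as the main obstacle, which is fair, but the proof does need that case analysis written out.
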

\begin{proof}
The proof of Theorem 3.3 consists of two parts.  The first part is to
note that the bounds towards the generalized Ramanujan conjecture (see
[Cl1] and [Sa1])
imply that there exists an explicit $p = p ( G , \Gamma )$ with the
property that all the ($K$-finite) matrix coefficients occurring in\break
$L^2_0 (G \slash \Gamma (q))$ are in $L^{p + \eta} ( G )$ for all
$\eta > 0$, and all $q \in \mathbb{N}$.  The second is to use this
information to give an explicit estimate for $\theta$.

For the first part, let us note that our lattice $\Gamma = \mathbb{G} (
\mathbb{Z})$ is an irreducible lattice, and as a result, in the unitary
representation of $G$ in $L^2_0 ( G \slash \Gamma )$, the matrix
coefficients decay to zero as $g \rightarrow \infty$ in $G$, namely the
representation is strongly mixing (recall that we assume $G$ has no
non-trivial compact factors).  In particular, if $H$ is an almost-simple
component of $G$, then $H$ has no invariant unit vectors in $L^2_0 ( G
\slash \Gamma (q))$.  We now divide the argument into two cases.
\begin{enumerate}
\item Assume that $H$ has property $T$.  Then there exists $p = p_H$,
such that {\it every} unitary representation of $H$ without $H$-fixed
unit vectors has its ($K$-finite) matrix coefficients in $L^{p +
\eta}$ for all $\eta > 0$ [Co].  Thus we can use as a bound for
the automorphic spectrum a bound valid for all unitary representations
of $H$.  Therefore in this case the integrability parameter $p$ above depends
only on $H$ but not on $\Gamma$.  In particular, the bound holds in $L^2_0(G/\Gamma(q))$ for all
finite-index subgroups $\Gamma(q)$, including the principal congruence
subgroups.

However, note that for some lattices $\Gamma$ one can do better; this
applies in particular to certain uniform arithmetic lattices, as will be
discussed further in \S6 below.

For a list of $p_H$ for classical simple groups $H$ with property $T$ we refer to [Li],
and for exceptional groups, to [LZ], [Oh] and [Lo-S].  Thus for example $p_{SL_n (
\mathbb{R})} = 2 ( n - 1) ( n \geq 3)$, and  $p_{Sp ( n , \mathbb{R})} = 2n
( n \geq 2)$. 
\item $G$ is defined over $\mathbb{Q}$, and so are its simple
component subgroups.  Let now $H$ be a simple algebraic
$\mathbb{Q}$-subgroup of real rank one which does not satisfy property
$T$, and $\Gamma = \mathbb{G} ( \mathbb{Z})$.  Then there still exists
$p = p ( H , \Gamma )$, such that the set of representations of $H$
obtained as restrictions from the representations of $G$ on $L^2_0 ( G
\slash \Gamma ( q ))$ have their ($K$-finite) matrix coefficients in
$L^{p + \eta} ( H )$, $\eta > 0$ for all $q \in \mathbb{N}$.  This fact is
established in most cases in [B-S], and in the missing cases by [Cl].
These results yield the
following explicit estimate.  Let $\rho_H = \frac{1}{2} ( m_1 + 2m_2)$
where $m_1$ (resp. $m_2$) is the multiplicity of the short (resp. long)
root in the root system associated with a maximal $\mathbb{R}$-split
torus in $H$.  The parameter $s$ of a non-trivial complementary
series representation $\pi_s$ that can occur in the automorphic spectrum
of $H$ is constrained to satisfy $s \leq \rho_H - \frac{1}{4}$.  Now the
volume density on $H$ in radial coordinates is comparable to exp($2
\rho_H t$),  and the decay of the spherical function $\varphi_s$ is
comparable to exp$(- \frac{1}{4} t)$, so that the matrix coefficients
are in $L^{p + \eta} ( H )$ where $p = 8 \rho_H = 4 ( m_1 + 2m_2)$. 
\end{enumerate}

Finally to conclude the first part of the proof, note that for $p = p (
G , \Gamma )$ we may take the maximum of $p ( H , \Gamma )$ as $H$
ranges over the almost-simple $\mathbb{Q}$-subgroups, since any
(normalized) matrix coefficient has absolute value bounded by 1.

The second part of the proof consists of showing how to derive an
explicit estimate for the decay of the operator norms of $\pi^0_{G
\slash \Gamma ( q )} ( \beta_t)$ from the bound on the automorphic
spectrum.  Let $p = p ( G , \Gamma )$ be the minimum value such that every strongly mixing
unitary representation weakly contained  has its ($K$-finite) matrix coefficients in $L^{p
+ \eta}(G)$ for all $\eta > 0$.  Recall that  we define $n_e = n_e (\Gamma )$
is the least even integer greater than or equal to $ p(G,\Gamma) \slash 2$.  By 
[N1, Thm. 1]  

$$
\parallel \pi^0_{G \slash \Gamma} ( \beta_t ) \parallel_{L^2_0 ( G
\slash \Gamma )} \, \leq \, \parallel \lambda_G ( \beta_t )
\parallel^{\frac{1}{n_{e}}  }_{L^2 ( G )}
$$

\noindent where $\lambda_G$ is the regular representation of $G$ on $L^2 ( G )$.
Now following [N1, Thm. 4], by the Kunze-Stein phenomenon the norm of the
convolution operator $\lambda_G ( \beta_t)$ determined by $\beta_t$ on
$L^2 ( G )$ is bounded by $C^\prime_\eta$ vol$(B_t)^{- \frac{1}{2} +
\eta}$.  Taking the volume asymptotics of $B_t$ stated in Theorem
3.1 into account, we conclude that $\theta = a / 2n_e$ gives the norm
bound stated in Theorem 3.3.

\end{proof}

\medskip
\noindent
\underline{\sf 3.2 \  Averaging operators and counting lattice points}

We now turn to a proof of Theorem 3.2, and begin by explicating the
connection between the averaging operators associated with $\beta_t$ and
counting lattice points, following the method developed in [GN1]. 

Consider the bi-$K$-invariant Riemannian metric on $G$ covering the
Riemannian metric associated with the Cartan-Killing form on the
symmetric spaces $S =G \slash K$, and let $d$ denote the distance
function.  Let vol denote the Haar measure defined by the volume form
associated with the Riemannian metric.  Define
\[
{O}_\epsilon = \{ g \in G: \, d ( g , e ) \, < \, \epsilon \}.
\]
Recall that $B_t \subset G , t \in \mathbb{R}_+$ is the family of
subsets
\[
B_t \, = \, \{ g \in G ; \parallel g \parallel \, \leq \, e^t \} \, .
\]

The sets $B_t$ enjoy the following stability and regularity properties. 

\setcounter{prop}{3}
\begin{prop}{[GN1, Thm. 3.15].} 
The family $B_t$ is admissible, namely there exists $c >
0$, $\epsilon_0 >$ and $t_0 > 0$ such that for all $t \geq t_0$ and $0 <
\epsilon < \epsilon_0$
\end{prop}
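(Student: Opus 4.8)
The statement to be proven is Proposition 3.4, which asserts that the family $B_t$ of norm balls is \emph{admissible}: there exist $c>0$, $\epsilon_0>0$ and $t_0>0$ such that for all $t\ge t_0$ and $0<\epsilon<\epsilon_0$ the inclusions
\[
{O}_\epsilon \cdot B_{t-\epsilon} \cdot {O}_\epsilon \subseteq B_t
\quad\text{and}\quad
\mathrm{vol}\,\big({O}_\epsilon B_t {O}_\epsilon\big)\le (1+c\epsilon)\,\mathrm{vol}(B_t)
\]
hold (this is the content of [GN1, Thm.~3.15]). The plan is to reduce everything to two elementary facts about the norm function on $G\subset GL_n(\mathbb{R})$: sub-multiplicativity up to bounded distortion, and the absence of sharp spikes in the volume growth, i.e. a Lipschitz-type comparison $\mathrm{vol}(B_{t+\epsilon})\le(1+c\epsilon)\mathrm{vol}(B_t)$ for small $\epsilon$. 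The second of these is immediate from the precise volume asymptotics in Theorem~3.1: since $\mathrm{vol}(B_t)\sim B e^{at}t^{b}$, we have $\mathrm{vol}(B_{t+\epsilon})/\mathrm{vol}(B_t)\to e^{a\epsilon}=1+O(\epsilon)$ uniformly for $t$ large, which gives the volume-stability estimate with any $c>a$ once $t_0$ is large and $\epsilon_0$ small.

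For the containment ${O}_\epsilon B_{t-\epsilon}{O}_\epsilon\subseteq B_t$, the key observation is that any norm $\|\cdot\|$ on $M_n(\mathbb{R})$ is equivalent to the operator norm, and for the operator norm one has the exact submultiplicativity $\|g_1g_2\|_{\mathrm{op}}\le\|g_1\|_{\mathrm{op}}\|g_2\|_{\mathrm{op}}$. Hence for a general norm there is a constant $\kappa\ge 1$ with $\|g_1g_2\|\le\kappa\|g_1\|\|g_2\|$ for all $g_1,g_2\in G$. Moreover, since ${O}_\epsilon$ is a bounded neighbourhood of $e$ shrinking to $\{e\}$ as $\epsilon\to 0$, we have $\sup_{g\in{O}_\epsilon}\|g\|\le e^{\delta(\epsilon)}$ with $\delta(\epsilon)\to 0$; choosing $\epsilon_0$ so that $\kappa^2 e^{2\delta(\epsilon)}\le e^{2\epsilon}$ on $(0,\epsilon_0)$ — which uses that $\delta(\epsilon)$ can be taken $O(\epsilon)$ because $d(\cdot,e)$ and $\log\|\cdot\|$ are both Lipschitz near $e$ — one gets that $u_1 b u_2$ with $u_i\in{O}_\epsilon$, $b\in B_{t-\epsilon}$ satisfies $\|u_1bu_2\|\le\kappa^2 e^{2\delta(\epsilon)}e^{t-\epsilon}\le e^{t}$, i.e. $u_1bu_2\in B_t$. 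Combining the two inclusions ${O}_\epsilon B_{t-\epsilon}{O}_\epsilon\subseteq B_t$ and (by the same argument applied at $t$) ${O}_\epsilon B_t{O}_\epsilon\subseteq B_{t+\delta'(\epsilon)}$ with the volume-stability estimate yields the admissibility bound.

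The main obstacle — and the only point requiring genuine care rather than soft arguments — is making the distortion exponent $\delta(\epsilon)$ genuinely linear in $\epsilon$, i.e. controlling $\sup_{g\in{O}_\epsilon}\log\|g\|$ by $C\epsilon$ rather than merely $o(1)$. This is what forces the final multiplicative loss in volume to be of the form $1+c\epsilon$ rather than something weaker, and it is essential for the error term in Theorem~3.2 to come out as a clean power of $T$. The resolution is that near the identity the Riemannian distance $d(g,e)$ is comparable (up to constants) to $\|g-e\|$, while $\log\|g\|=\log\|e+(g-e)\|$ is, by the mean value theorem applied to $s\mapsto\log\|e+s(g-e)\|$, bounded by a constant times $\|g-e\|$; chaining these comparisons gives $\log\|g\|\le C\,d(g,e)<C\epsilon$ on ${O}_\epsilon$. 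With $\delta(\epsilon)=C\epsilon$ in hand, one sets, say, $c=2C+a+1$, picks $t_0$ from Theorem~3.1's asymptotics and $\epsilon_0$ small enough that $\kappa^2 e^{2C\epsilon}\le e^{2\epsilon}$ fails to be an obstruction after absorbing $\kappa$ into a harmless translation of the $t$-parameter — more precisely one replaces $B_t$ by $B_t$ up to a fixed additive shift $\log\kappa$ in $t$, which does not affect admissibility — and the proposition follows. I would present the argument in this order: first record the operator-norm submultiplicativity and the equivalence of norms; second, the local Lipschitz comparison $\log\|g\|\le C\,d(g,e)$; third, the volume-stability estimate from Theorem~3.1; and finally assemble the two inclusions and the volume bound.
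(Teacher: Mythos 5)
The paper does not actually prove Proposition~3.4; it is cited verbatim from [GN1, Thm.~3.15], where the argument is carried out via the Cartan decomposition and an explicit analysis of the volume density on $\mathfrak{a}^+$. So there is no "paper's own proof" to compare against, but your attempt has two genuine gaps.

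\textbf{The volume estimate (3.2) does not follow from the asymptotic of Theorem~3.1.} You argue that since $\mathrm{vol}(B_t)\sim Be^{at}t^b$, the ratio $\mathrm{vol}(B_{t+\epsilon})/\mathrm{vol}(B_t)$ tends to $e^{a\epsilon}$ and is therefore $\le 1+c\epsilon$ for $t$ large. But an asymptotic $\mathrm{vol}(B_t)=Be^{at}t^b(1+o(1))$ gives no control on the \emph{modulus of continuity} of $t\mapsto\mathrm{vol}(B_t)$. The $o(1)$ error is allowed to oscillate: one can have a monotone increasing function satisfying the asymptotic whose increments over intervals of length $\epsilon$ are as large as $2^{-n}$ over a window of width $2^{-2n}$ near $t=n$, producing ratios $\mathrm{vol}(B_{t+\epsilon})/\mathrm{vol}(B_t)\approx 1+2^n\epsilon$, i.e.\ unboundedly worse than $1+c\epsilon$. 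Thus (3.2) is a strictly finer, local, regularity property of the volume function and requires a separate argument — in [GN1] it comes from the explicit polar-coordinate expression for $\mathrm{vol}(B_t)$, not from the leading-order asymptotic. This is the missing idea.

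\textbf{The inclusion argument cannot absorb the sub-multiplicativity constant.} You write $\|u_1bu_2\|\le\kappa^2e^{2\delta(\epsilon)}e^{t-\epsilon}$ and want this $\le e^t$, i.e.\ $\kappa^2e^{2\delta(\epsilon)}\le e^{\epsilon}$. If the norm is not sub-multiplicative (e.g.\ the max-entry norm, for which $\kappa=n$), then $\kappa^2>1$ is a fixed constant and the inequality fails for all small $\epsilon$. "Absorbing $\kappa$ into a translation of the $t$-parameter" does not help: shifting $t$ by $\log\kappa$ turns $c\epsilon$ into $\log\kappa+c\epsilon$, which is not of the required form $c\epsilon$. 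The fix is to abandon multiplicative estimates and instead estimate the \emph{additive} perturbation: for $u,u'\in O_\epsilon$ and $b$ with $\|b\|\le e^t$, write $ubu'-b=(u-e)bu'+b(u'-e)$, bound $\|u-e\|,\|u'-e\|\ll\epsilon$ (valid since $d(\cdot,e)$ and $\|\cdot-e\|$ are comparable near $e$), and conclude $\|ubu'-b\|\le C\epsilon\|b\|$, hence $\|ubu'\|\le(1+C\epsilon)\|b\|\le e^{C\epsilon}\|b\|\le e^{t+C\epsilon}$. That gives (3.1) directly with $c=C$, independent of $\kappa$. Your Lipschitz comparison of $\log\|\cdot\|$ with $d(\cdot,e)$ is the right ingredient; it just has to be deployed additively rather than fed through sub-multiplicativity.
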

\setcounter{equation}{0}
\begin{equation}\label{adm1}
{O}_\epsilon \, . \, B_t \, . \, {O}_\epsilon
\subset B_{t + c \epsilon}, 
\end{equation} 
\begin{equation}\label{adm2}
\mbox{vol}(B_{t + \epsilon}) \, \leq \, ( 1 \, + \, c \epsilon) \, . 
\, \mbox{vol} (B_t) \, .
\end{equation}

Given the lattice $\Gamma=\Gamma(1)=G(\mathbb{Z})$, fix the unique invariant volume form vol=vol$_G$ on $G$ satisfying vol$(G/\Gamma)=1$. We denote by vol$_{G/\Gamma(q)}$ the volume form 
 induced on $G \slash \Gamma(q)$ by the volume form on $G$.
Then  vol$_{G \slash \Gamma(q)} ( G \slash \Gamma(q) )=[\Gamma:\Gamma(q)]$ is the total volume of the locally symmetric space $G/\Gamma(q)$.  We also
let $m_{G \slash \Gamma(q)}$ denote the corresponding probability measure
on $G \slash \Gamma(q)$, namely  vol$_{G \slash \Gamma(q)} \slash 
[\Gamma:\Gamma(q)] )$.  

We note that clearly $\frac{1}{2} \epsilon^{\dim G} \leq \, \mbox{vol}
({O}_\epsilon ) \, 
\leq \, 2 \epsilon^{\dim G}$, for 
$0 < \epsilon \leq  \epsilon^\prime_0$.  Denote
\[
\chi_\epsilon \, = \, \frac{\chi_{{O}_\epsilon}}
{\mbox{vol} ( {O}_\epsilon )}
\]

We now fix a congruence subgroup $\Gamma(q)\subset \Gamma=\Gamma (1)$, and define,
for every given $y \in \Gamma (1)$
\[
\phi^y_\epsilon ( g \Gamma(q) ) \, = \, 
\sum\limits_{\gamma \, \in \, \Gamma(q)} \, 
\chi_\epsilon ( g \gamma y ) \, ,
\]
so that $\phi^y_\epsilon$ is a measurable bounded function on $G \slash
\Gamma(q)$ with compact support, and 
\[
\int\limits_{G} \chi_\epsilon d \, \mbox{vol} = 1 \, , \ \mbox{and} \ 
\int\limits_{G \slash \Gamma(q)} \, 
\phi^y_\epsilon d \,  \mbox{vol}_{G \slash \Gamma(q)} 
\, = \, 1 \, ,  \ \mbox{and so} \ \int\limits_{G \slash \Gamma(q)}
\, \phi^y_\epsilon \,  dm_{G \slash \Gamma(q)} \, = \, \frac{1}{[ \Gamma: \Gamma(q)] }
\, .
\]

Clearly, for any $\delta > 0$, $h \in G$ and $t \in \mathbb{R}_+$, the
following are equivalent (for any function on $G \slash \Gamma(q)))$:
\begin{equation}
\Big| \pi_{G \slash \Gamma} ( \beta_t) \, \phi^y_\epsilon (h \Gamma(q) ) \,
- \, \frac{1}{[\Gamma:\Gamma(q)] } \Big| \, \leq \, \delta \, ,
\end{equation}
\begin{equation}
\frac{1}{ [\Gamma:\Gamma(q)] )} \, - \delta \, \leq \, \frac{1}{\mbox{vol} (
B_t)} \, \int\limits_{B_t} \, \phi^y_\epsilon ( g^{-1} h \Gamma(q) ) \, d
\,
\mbox{vol} ( g ) \, \leq \, \frac{1}{[ \Gamma:\Gamma(q)] } \, + \, \delta \, .
\end{equation}

The set where the first inequality holds will be estimated using the quantitative mean ergodic theorem. The integral in the second expression is connected to lattice points as follows:

\addtocounter{Lem}{4}
\begin{Lem} 
For every $t \geq t_0 + c \epsilon_0$, $0 < \epsilon
\leq \epsilon_0$ and for every $h \in \mathcal{O}_\epsilon$,
\end{Lem}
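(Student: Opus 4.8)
The plan is to establish a sandwiching of the sharp-cutoff counting function $\#\{w\in\Gamma(q)y:\|w\|\le T\}$ between two integrals of the averaging operator $\pi_{G/\Gamma}(\beta_t)$ applied to the bump function $\phi^y_\epsilon$, and then feed in the quantitative mean ergodic bound of Theorem 3.3. Concretely, for $h\in\mathcal O_\epsilon$ one unfolds the definition of $\phi^y_\epsilon$: the integral
\[
\int_{B_t}\phi^y_\epsilon(g^{-1}h\Gamma(q))\,d\mathrm{vol}(g)
\;=\;\sum_{\gamma\in\Gamma(q)}\int_{B_t}\chi_\epsilon(g^{-1}h\gamma y)\,d\mathrm{vol}(g),
\]
and for each $\gamma$ the inner integral is $\mathrm{vol}(B_t\cap h\gamma y\,\mathcal O_\epsilon)/\mathrm{vol}(\mathcal O_\epsilon)$. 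This quantity is $1$ whenever $h\gamma y\,\mathcal O_\epsilon\subset B_t$ and $0$ whenever $h\gamma y\,\mathcal O_\epsilon\cap B_t=\emptyset$; for all other $\gamma$ it lies in $[0,1]$. So the statement of Lemma 3.5 will be the two-sided bound: for $h\in\mathcal O_\epsilon$,
\[
\#\{\gamma\in\Gamma(q):\ \gamma y\in B_{t-c\epsilon}\}\ \le\ \int_{B_t}\phi^y_\epsilon(g^{-1}h\Gamma(q))\,d\mathrm{vol}(g)\ \le\ \#\{\gamma\in\Gamma(q):\ \gamma y\in B_{t+c\epsilon}\},
\]
i.e. the integral is trapped between $\#\{w\in\Gamma(q)y:\|w\|\le e^{t-c\epsilon}\}$ and $\#\{w\in\Gamma(q)y:\|w\|\le e^{t+c\epsilon}\}$.

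The key steps, in order: first, unfold $\phi^y_\epsilon$ and interchange sum and integral (legitimate since everything is nonnegative). Second, identify each summand as $\mathrm{vol}(\mathcal O_\epsilon)^{-1}\mathrm{vol}(B_t\cap h\gamma y\,\mathcal O_\epsilon)$ after the change of variables $g\mapsto g^{-1}$ (Haar measure is unimodular on the semisimple $G$). Third, for the lower bound, observe that if $\|\gamma y\|\le e^{t-c\epsilon}$, i.e. $\gamma y\in B_{t-c\epsilon}$, then since $h\in\mathcal O_\epsilon$ we get $h\gamma y\,\mathcal O_\epsilon\subset \mathcal O_\epsilon\,B_{t-c\epsilon}\,\mathcal O_\epsilon\subset B_{t-c\epsilon+c\epsilon}=B_t$ by the admissibility inclusion (\ref{adm1}) (one may need to absorb constants, replacing $c\epsilon$ by a fixed multiple; I will state the lemma with the constant $c$ from Proposition 3.4). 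Hence that summand equals $1$, and summing over such $\gamma$ gives the lower bound. Fourth, for the upper bound, if the summand is nonzero then $B_t\cap h\gamma y\,\mathcal O_\epsilon\ne\emptyset$, so there is $o\in\mathcal O_\epsilon$ with $h\gamma y\,o\in B_t$, whence $\gamma y = h^{-1}(h\gamma y o)o^{-1}\in \mathcal O_\epsilon\,B_t\,\mathcal O_\epsilon\subset B_{t+c\epsilon}$ again by (\ref{adm1}) (using $h^{-1}\in\mathcal O_\epsilon$); each such summand is $\le 1$, giving the upper bound.

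The main obstacle is purely bookkeeping: getting the admissibility constant $c$ and the threshold $\epsilon_0$, $t_0$ to line up so that the one inclusion $\mathcal O_\epsilon B_t\mathcal O_\epsilon\subset B_{t+c\epsilon}$ can be applied in both directions (for the outer shift $t\to t+c\epsilon$ and the inner shift $t-c\epsilon\to t$), and tracking that $\mathcal O_\epsilon$ is symmetric so that $h^{-1}\in\mathcal O_\epsilon$ whenever $h\in\mathcal O_\epsilon$ (true since $d$ is bi-$K$-invariant and comes from a Riemannian metric, hence $d(g^{-1},e)=d(e,g)=d(g,e)$). There is no analytic difficulty here; the content of the argument is entirely in the admissibility property (\ref{adm1})–(\ref{adm2}) already granted by Proposition 3.4. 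Once Lemma 3.5 is in place, combining it with the sandwiching inequalities (3.?)–(3.?) displayed just before it and with the $L^2$ operator-norm decay of $\pi^0_{G/\Gamma(q)}(\beta_t)$ from Theorem 3.3 — estimating the measure of the exceptional set where (the first displayed inequality) fails via Chebyshev — will yield the uniform remainder estimate of Theorem 3.2, after optimizing $\epsilon$ as a function of $T$ (which is where the exponent $\theta/(1+\dim G)$ emerges, since $\mathrm{vol}(\mathcal O_\epsilon)\asymp\epsilon^{\dim G}$).
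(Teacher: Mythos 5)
Your proof is correct and takes essentially the same route as the paper: unfold $\phi^y_\epsilon$, use the admissibility inclusion (\ref{adm1}) together with the symmetry of $\mathcal O_\epsilon$ and the normalization $\int_G\chi_\epsilon\,d\,\mathrm{vol}=1$ to pass between the integral and the lattice-point count. The only difference is a harmless re-indexing: you sandwich $\int_{B_t}\phi^y_\epsilon(g^{-1}h\Gamma(q))\,d\mathrm{vol}$ between $|B_{t-c\epsilon}\cap\Gamma(q)y|$ and $|B_{t+c\epsilon}\cap\Gamma(q)y|$, whereas the paper sandwiches $|B_t\cap\Gamma(q)y|$ between $\int_{B_{t-c\epsilon}}$ and $\int_{B_{t+c\epsilon}}$; the two statements are equivalent upon replacing $t$ by $t\mp c\epsilon$ and feed into Theorem 3.2 identically. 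Your extra observation that each summand equals $\mathrm{vol}(B_t\cap h\gamma y\,\mathcal O_\epsilon)/\mathrm{vol}(\mathcal O_\epsilon)\in[0,1]$, hence is exactly $1$ (resp.\ $0$) when $h\gamma y\,\mathcal O_\epsilon$ lies inside (resp.\ outside) the ball, is a clean geometric repackaging of the paper's appeal to $\chi_\epsilon\geq 0$ and $\int_G\chi_\epsilon=1$; no change of variables is actually needed, just left-invariance of Haar measure on the translate $h\gamma y\,\mathcal O_\epsilon$.
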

\[
\int\limits_{B_{t - c \epsilon}} \, \phi^y_\epsilon ( g^{-1} h \Gamma(q) )
\, d \, \mbox{vol} ( g ) \, \leq \, | B_t \cap \Gamma(q) y| \, \leq \,
\int\limits_{B_{t + c \epsilon}} \,
\phi^y_\epsilon ( g^{-1} h \Gamma(q) ) \, d \, \mbox{vol} ( g ) \, .
\]

\begin{proof}
If $\chi_\epsilon ( g^{-1} h \gamma y ) \ne 0$ for some $g \in B_{t - c
\epsilon}$, $h \in {O}_\epsilon$ and $ \gamma y \in \Gamma (q) 
y$, then by (\ref{adm1})
\[
\gamma y \in h^{-1} \, . \, B_{t - c \epsilon} \, . \, (
\mbox{supp} \; \chi_\epsilon ) \subset B_t \, .
\]
Hence,
\[
\int\limits_{B_{t - c \epsilon}} \, \phi^y_\epsilon ( g^{-1} h \Gamma(q) )
\, d \, \mbox{vol} ( g ) \, \leq \,
\sum\limits_{\gamma y \in B_t \cap \Gamma(q) y} \: \int\limits_{B_t} \,
\chi_\epsilon ( g^{-1} h \gamma y ) \, d \, \mbox{vol} ( g ) \, \leq \, | B_t
\cap \Gamma(q) y | \, .
\]

In the other direction, for $\gamma y \in B_t \cap \Gamma (q) y$ and $h
\in {O}_\epsilon$,
\[
\mbox{supp} ( g \mapsto \chi_\epsilon ( g^{-1} h \gamma y )) \, = \, h
\gamma y ( \mbox{supp} \chi_\epsilon)^{-1} \subset B_{t + c \epsilon}
\, .
\]
and since $\chi_\epsilon \geq 0$, again by (\ref{adm1}) : 
\[
\int\limits_{B_{t + c \epsilon}} \, \phi^y_\epsilon ( g^{-1} h \Gamma (q))
\, d \, \mbox{vol} ( g ) \, \geq \, 
\sum\limits_{\gamma y \in B_t \cap \Gamma(q) y} \: \int\limits_{B_{t + c
\epsilon}} \,
\chi_\epsilon ( g^{-1} h \gamma y ) \, d \, \mbox{vol} ( g ) \, \geq \,
| B_t \cap \Gamma(q) y | \, . 
\]
\end{proof}

\medskip
\noindent
\underline{\sf 3.3 \ Uniform error estimates for congruence groups}

We now complete the proof of Theorem 3.2 (compare [GN1, \S 6.6]).

For the lattices $\Gamma ( q )$ the
action of the operators $\pi_{G \slash \Gamma(q)} ( \beta_t)$ on $L^2_0 ( G
\slash \Gamma(q))$ satisfies the spectral estimate stated in Theorem
3.3, uniformly in $q$.  It follows that for the probability spaces $( G \slash \Gamma(q) ,
\, m_{G \slash \Gamma(q)})$ we have for all $t > 0$ and every $\theta^\prime < \theta$
\[
\Big\| \pi_{G \slash \Gamma(q)} ( \beta_t ) \, \phi^y_\epsilon \, - \,
\int\limits_{G \slash \Gamma(q)} \, \phi^y_\epsilon \, dm_{G \slash \Gamma(q)}
\Big\|_{L^2(m_{G/\Gamma(q)})} \, \leq \, C_{\theta^\prime} e^{- \theta^\prime t} \, \parallel
\phi^y_\epsilon \parallel_{L^2(m_{G/\Gamma(q)})}\,\,.
\]
Therefore for all $\delta > 0$, all $t > 0$ and $\epsilon < \epsilon_0^\prime$ 
\[
m_{G \slash \Gamma(q)} 
 \left\{
h \Gamma(q); \Big| \pi_{G \slash \Gamma(q)} ( \beta_t ) \, \phi^y_\epsilon ( h
\Gamma(q) ) \, - \,
\frac{1}{[\Gamma :\Gamma (q)])} 
\Big| \, > \, \delta \right\}  \, \leq \, C^2_{\theta^\prime}
\, \delta^{-2} \, e^{- 2 \theta^\prime t} \, \parallel \phi^y_\epsilon
\parallel^2_{L^2 ( m_{G \slash \Gamma(q) )}} \, .
\]

Clearly, we can fix $\epsilon^{\prime\prime}_0$ such that if $\epsilon <
\epsilon^{\prime\prime}_0$ then the translates ${O}_\epsilon
w$ are disjoint for distinct $w \in \Gamma (1)$.  Then the
supports of the functions $\chi_\epsilon ( h \gamma y )$ for $\gamma
\in \Gamma(q)$ (and a fixed $y \in \Gamma ( 1 ))$ do not intersect, and so
\[
\parallel \phi^y_\epsilon \parallel^2_{L^2 ( m _{G \slash \Gamma(q) })} \, =
\, \int\limits_{G \slash \Gamma(q)} \, \phi^y_\epsilon ( h \Gamma(q) )^2 \, d
\, \mbox{vol}_{G \slash \Gamma(q)} \slash [ \Gamma:\Gamma(q)]  \, =
\]
\[
=  \, \int\limits_{G} \, \chi^2_\epsilon ( g ) \, \mbox{dvol} ( g )
\slash [\Gamma:\Gamma(q)] )=\frac{1}{\mbox{vol}(\cO_\epsilon) [\Gamma:\Gamma(q)]} \, \leq \, \frac{2 \epsilon^{-\dim G}}{[ \Gamma:\Gamma(q)] }\,\,.
\]
We conclude that 
\begin{equation}
m_{G \slash \Gamma(q)} 
\left\{
h \Gamma(q)\,;\, \Big| \pi_{G \slash \Gamma(q)} ( \beta_t) \, \phi^y_\epsilon ( h
\Gamma(q)) \, - \, \frac{1}{[ \Gamma:\Gamma(q)] )} \Big| \, > \, \delta \right\} \, \leq \, \\ \frac{2C^2_{\theta^\prime} \,\delta^{-2} \,
\epsilon^{-\dim G} \, e^{-2 \theta^\prime t}}{[ \Gamma:\Gamma(q)] } \, .
\end{equation}
In particular, the measure of the latter set decays exponentially fast with $t$.
Therefore, it will eventually be strictly smaller than $m_{G \slash
\Gamma(q)} ( {O}_\epsilon \Gamma)$, and for $\epsilon <
\epsilon^{\prime\prime}_0$, we clearly have $m_{G \slash \Gamma(q)} (
{O}_\epsilon \Gamma(q) ) \, = \, \mbox{vol} ( {O}_\epsilon
) \slash [ \Gamma:\Gamma(q)] $.

For any $t$ such that the measure in (3.5) is sufficiently small,
clearly 
\begin{equation}
{O}_\epsilon \Gamma (q)\cap \left\{
h \Gamma(q)\, ; \, \Big|
\pi_{G \slash \Gamma(q)} ( \beta_t) \, \phi^y_\epsilon ( h \Gamma(q) ) \, - \,
\frac{1}{[ \Gamma :\Gamma(q)] } \Big| \, \leq \, \delta \right\} \, \ne \,
\emptyset
\end{equation}
and thus for any $h_t$ such that $h_t \Gamma(q)$ is in the non-empty
intersection (3.6)
\[
\frac{1}{\mbox{vol} (B_t)} \, \int\limits_{B_t} \, 
\phi^y_\epsilon ( g^{-1} \, h_t \Gamma(q) ) \, 
d \, \mbox{vol} ( g ) \, \leq \, 
\frac{1}{[ \Gamma:\Gamma(q)]  } \, + \, \delta \, .
\]
On the other hand, by Lemma 3.5 for any $\epsilon \leq \epsilon_0$, $t
\geq t_0 + c \epsilon_0$ and $h \in \mathcal{O}_\epsilon$
\begin{equation}
| \Gamma(q) y \cap B_t | \, \leq \,
\int\limits_{B_{t + c \epsilon}} \, \phi^y_\epsilon ( g^{-1} h \Gamma(q) )
\, d \, \mbox{vol} ( g ) \, .
\end{equation}
Combining the foregoing estimates and using (\ref{adm2}),  we conclude that
\[
| \Gamma(q) y \cap B_t | \leq \left(
\frac{1}{[\Gamma :\Gamma(q)]} \, + \, \delta \right) \, \mbox{vol} ( B_{t + c
\epsilon} ) \, \leq \,
\left( 
\frac{1}{[\Gamma:\Gamma(q)] } \, + \, \delta \right) \,
( 1 + c \epsilon) \, \mbox{vol} ( B_t) \, .
\]
This estimate holds as soon as (3.6) holds, and so certainly when  
\[
\frac{2 C^2_{\theta^\prime}}{[ \Gamma:\Gamma] } \, \delta^{-2} \, \epsilon^{-\dim G} \, e^{- 2
\theta^\prime t} \, \leq \, \frac{1}{2} \, . \,
\frac{\frac{1}{2} \epsilon^{\dim G}}{[ \Gamma:\Gamma(q)]} \, \leq \, \frac{1}{2} \,
m_{G \slash \Gamma} \, ( \mathcal{O}_\epsilon \Gamma )
\, .
\]

Thus we seek to determine the parameters so that $8C^2_{\theta^\prime} \delta^{-2} \,
\mbox{exp} ( - 2 \theta^\prime t ) \, = \, \epsilon^{2\dim G}$.  In order to 
balance the two significant parts of the error term, let us take $c \epsilon =
\delta$, and then 
\[
\delta \, = \, C_{\theta^\prime}^\prime\, e^{- 2 \theta^\prime t \slash (2 \dim G  + 2)}
\]
and so as soon as $\delta < 1$, we have,  using also  that $[\Gamma : \Gamma(q)]\geq 1$ 
\[
\frac{| \Gamma(q) y \cap B_t|}{ \mbox{vol} ( B_t)} \, \leq \,
\left(
\frac{1}{[ \Gamma:\Gamma(q) ]} \, + \, \delta \right) \,(1 \, + \, c \epsilon )
\, \leq \, \frac{1}{[ \Gamma:\Gamma(q)] } \, + \,  \delta +c\epsilon+\delta c \epsilon
\]
\[ 
\leq \, \frac{1}{[\Gamma: \Gamma(q) } \, + \, 3 C^\prime_{\theta^\prime} \, e^{- \theta^\prime
t \slash ( \dim G + 1 )} \, . 
\]

Note that both the estimate (3.4) as well as the comparison argument in
Lemma 3.5, give a lower bound in addition to the foregoing upper bound.
Thus the same arguments can be repeated to yield also a lower bound for
the uniform lattice points count.  This concludes the proof of Theorem 3.2.
\hfill $\square$

\noindent {\bf Remark.} When the admissible family of sets $B_t$ consists of bi-$K$-invariant sets, namely sets that are invariant under left and right multiplication by a maximal compact subgroup $K$ of $G$, two improvements are possible in the previous result.
\begin{namelist}{xxxxx}
\item[(i)]  First, the parameter $\theta$ which controls the exponential decay of the operator norm $\parallel \pi^0_{G/\Gamma(q)}(\beta_t)\parallel$ depends 
only on the spherical spectrum and can be estimated directly by the spectral theory of spherical functions. The resulting estimate is $\theta=a/p_K $, where $p_K(G,\Gamma)$ is  the $L^p(G)$-integrability parameter associated with  the spherical functions in $\pi^0_{G/\Gamma}$. 
This estimates eliminates the lack of resolution that can be caused by the tensor power argument,      
which gives $\theta =a/2n_e$, $n_e$ the least even integer $\ge p(G,\Gamma)/2$.  
\item[(ii)] Second, when $B_t$ are bi-$K$-invariant, the arguments in the proof  of Theorem 3.2 can be applied 
in the obvous manner to the symmetric space $G/K$ whose dimension is $\dim G \,-\, \dim K$ , so that the exponent in the resulting error estimate is $\frac{a}{p_K (1+\dim G/K)}$.
  \end{namelist}
  
\medskip
\noindent
\underline{\large \sf Section 4. \ Multiplicativity and Sieve Dimension}

As before, let $\mathbb{G} \subset GL ( n,\mathbb{C} )$ be an algebraically connected semisimple algebraic
matrix group defined over $\mathbb{Q}$ which we now assume is also
simply connected.  Denote by $\mathbb{G} ( R )$ the points of $G$ with
coefficients in a ring $R$, and set $G=\mathbb{G}(\mathbb{R})$.  Fix $v_0 \in \mathbb{Z}^n$ and let $V = \mathbb{G}. v_0$ be the corresponding orbit which we assume is Zariski closed
in affine $n$-space $A^n$.  We assume further that the stabilizer of $v_0$
in $\mathbb{G}$ is trivial.  Thus $V$ is a principal homogeneous space for $\mathbb{G}$
and it is defined over $\mathbb{Q}$.  Since $\mathbb{G}$ is connected it follows
that $V$ is an (absolutely) irreducible affine variety defined over
$\mathbb{Q}$ and is of dimension equal to $\dim \mathbb{G}$.  The ring of
$\mathbb{G}$-invariants for the action of $\mathbb{G}$ on $n$-dimensional space separates
the closed $\mathbb{G}$-orbits (see [B-HC] and we may choose generators $h_1 ,
h_2 , \ldots , h_\nu$ in $\mathbb{Q} [ x_1,\ldots, x_n]$ of this ring so that $V$ is
given by
\setcounter{section}{4}
\setcounter{equation}{0}
\begin{equation}
V: \, h_j ( x ) \, = \, \lambda_j \ \mbox{for} \ j \, = \, 1 , \ldots
\nu \ \mbox{and} \ \lambda_j \in \mathbb{Q} \, .
\end{equation} 

Let $V ( \mathbb{Z} )$, $V ( \mathbb{Q})$ denote the points of $V$ with
coordinates in $\mathbb{Z} , \mathbb{Q}$, etc.

\medskip
\noindent
\underline{\sf 4.1 \ Congruential analysis:}

Let $\Gamma =  G ( \mathbb{Z} )
\subset GL_n ( \mathbb{Z} )$ and
$\mathcal{O}  =  \Gamma . v_0$ 
be the corresponding orbit in
$\mathbb{Z}^n$. Since $\mbox{Zcl} ( \Gamma ) \, = \, G$ (see [Bo]),
$\mbox{Zcl} ( \mathcal{O} ) = V$.  For 
$d \geq 1$ an integer let
$\mathcal{O}_d$ be the subset of 
$( \mathbb{Z} \slash d \mathbb{Z} )^n$
which is obtained by reducing 
$\mathcal{O} $ modulo $d$.  Similarly let
$\Gamma_d$ be the reduction of $\Gamma$ in 
$GL_n ( \mathbb{Z} \slash d \mathbb{Z} )$, and so $\mathcal{O}_d = \Gamma_d \, . v_0  \, ( \mbox{mod } d )$. 

 For $g
\in \mathbb{Z} [ x_1,\ldots, x_n ]$ let
\begin{equation}
\mathcal{O}^g_d \, = \, \left\{
x \in \mathcal{O}_d : \, g ( x ) \, \equiv \, 0 ( d ) \right\} \, .
\end{equation}
According to the strong approximation theorem (see [P-R], recall that
we are assuming that $G ( \mathbb{R})$ has no compact factors) the
diagonal embedding 
\begin{equation}
\Gamma \longrightarrow \,\prod\limits_{p} \, G ( \mathbb{Z}_p )
\end{equation}
is dense, where $\mathbb{Z}_p$ are the $p$-adic integers.  Hence if $(d_1
, d_2) = 1$ then 
\begin{equation}
\Gamma_d \, = \, \Gamma_{ d_1} \, \times \, \Gamma_{ d_2}
\end{equation}
as a subgroup of $GL_n ( \mathbb{Z} \slash d \mathbb{Z} ) \, \cong
\, GL_n ( \mathbb{Z} \slash d_1 \mathbb{Z}) \, \times \, GL_n (
\mathbb{Z} \slash d_2 \mathbb{Z})$.

It follows that in $( \mathbb{Z} \slash d \mathbb{Z})^n \simeq (
\mathbb{Z} \slash d_1 \mathbb{Z})^n \, \times \, ( \mathbb{Z} \slash d_2
\mathbb{Z})^n$
\begin{equation}
\mathcal{O}_d \, = \, \mathcal{O}_{d_1} \times \, \mathcal{O}_{ d_2}
\end{equation}
and
\begin{equation}
\mathcal{O}^g_d \, = \, \mathcal{O}^g_{d_1} \, \times \,
\mathcal{O}^g_{d_2} \, .
\end{equation}

Now let $ f \in \mathbb{Q} [ x_1,\ldots,x_n ]$ with $f = g \slash N$, $N \geq 1$
where $g \in \mathbb{Z} [ x_1,\ldots,x_n ]$ with $g c d ( g ( \mathcal{O})) = N$.
Note that $f ( \mathcal{O}) \subset \mathbb{Z}$.  For $d \geq 1$ let 
\begin{equation}
\rho_f ( d ) \, = \, \frac{d | \mathcal{O}^g_{d N} |}{| \mathcal{O}_{d
N}|} \, .
\end{equation}
\setcounter{prop}{4}
\setcounter{prop}{0}
\begin{prop}
$\rho_f ( d )$ is multiplicative in $d$ and for $p$ prime $0 \leq \rho_f
( p ) < p$.
\end{prop}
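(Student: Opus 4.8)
The plan is to deduce both assertions from the product decompositions established in Section~4.1. For multiplicativity, suppose $(d_1,d_2)=1$ and write $D_i = d_i N$. The subtlety is that $\rho_f$ is defined using the modulus $dN$ rather than $d$, so I cannot simply quote $(\ref{})$-type splittings at $d_1$ and $d_2$; I must split at $D_1$ and $D_2$, which need not be coprime since $N$ may share factors with both. To get around this I would argue prime by prime instead. By the Chinese Remainder Theorem and the strong approximation consequence $\Gamma_m = \prod_{p^a\|m}\Gamma_{p^a}$, one has $\mathcal{O}_m = \prod_{p^a\|m}\mathcal{O}_{p^a}$ and likewise $\mathcal{O}^g_m = \prod_{p^a\|m}\mathcal{O}^g_{p^a}$ for any $m$. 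Applying this with $m = dN$ and collecting, for each prime $p$, all the $p$-power factors coming from $d$ and from $N$ together, one sees that
\[
\frac{|\mathcal{O}^g_{dN}|}{|\mathcal{O}_{dN}|} \;=\; \prod_{p}\frac{|\mathcal{O}^g_{p^{v_p(dN)}}|}{|\mathcal{O}_{p^{v_p(dN)}}|}.
\]
Since $v_p(dN) = v_p(d) + v_p(N)$ and, for coprime $d_1,d_2$, exactly one of $v_p(d_1),v_p(d_2)$ is nonzero at each $p$ dividing $d_1 d_2$, the product over primes factors as a product of the contribution from $d_1$ and the contribution from $d_2$; multiplying back by $d = d_1 d_2$ gives $\rho_f(d_1 d_2) = \rho_f(d_1)\rho_f(d_2)$. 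The one case needing a separate remark is a prime $p$ dividing $N$ but neither $d_1$ nor $d_2$: such primes contribute the same factor to $\rho_f(d_1)$, $\rho_f(d_2)$ and $\rho_f(d_1 d_2)$ via the definition's common factor $N$, and one checks this does not spoil the identity because the genuine multiplicativity is carried by the $d$-part; I would make this bookkeeping precise but it is routine.

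For the bound $0 \le \rho_f(p) < p$, nonnegativity is immediate since $\mathcal{O}^g_{pN}\subseteq \mathcal{O}_{pN}$ and the sets are finite. The content is the strict inequality $\rho_f(p) < p$, equivalently $|\mathcal{O}^g_{pN}| < |\mathcal{O}_{pN}|$, i.e. there exists at least one point of $\mathcal{O}_{pN}$ at which $g$ is \emph{not} $\equiv 0 \pmod{pN}$ — in fact, unwinding the definition of $f = g/N$ with $N = \gcd g(\mathcal{O})$, what is needed is a point $x \in \mathcal{O}$ with $p \nmid f(x)$. This is exactly where weak primitivity of $f$ enters: since $\gcd\{f(x): x\in\mathcal{O}\} = 1$, for each prime $p$ there is some $x\in\mathcal{O}$ with $p\nmid f(x)$; reducing that $x$ modulo $pN$ gives a point of $\mathcal{O}_{pN}$ outside $\mathcal{O}^g_{pN}$, and also shows $\mathcal{O}_{pN}$ is nonempty so the ratio is well-defined. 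Hence $\rho_f(p) < p$.

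The main obstacle I anticipate is purely organizational rather than mathematical: correctly tracking the interaction between the auxiliary factor $N$ and the sieving modulus $d$ in the definition $\rho_f(d) = d|\mathcal{O}^g_{dN}|/|\mathcal{O}_{dN}|$, so that the prime-by-prime factorization genuinely yields multiplicativity \emph{in $d$} (and not merely in $dN$). The clean way to do this, which I would adopt, is to first prove the multiplicative splitting of $x\mapsto |\mathcal{O}^g_x|/|\mathcal{O}_x|$ as a function of the modulus $x$ over prime powers using strong approximation, then observe that $\rho_f(d)$ is obtained by fixing the $N$-part of the modulus and varying only the $d$-part, and that fixing those finitely many extra prime-power factors preserves multiplicativity in the complementary variable. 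Everything else — finiteness, nonnegativity, nonemptiness — is formal, and the strict bound is a one-line consequence of weak primitivity.
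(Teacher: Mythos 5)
Your framework is sound and structurally close to the paper's (which also reduces matters to the Chinese-Remainder splitting of $\mathcal{O}^g_m$ and $\mathcal{O}_m$ furnished by strong approximation, just organized via a factorization $N=N_1N_2$ rather than prime by prime). However, there is a genuine gap precisely at the point you dismiss as ``routine bookkeeping.''

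Take a prime $p$ with $p\mid N$ but $p\nmid d_1d_2$, and let
$c_p=\bigl|\mathcal{O}^g_{p^{v_p(N)}}\bigr|\big/\bigl|\mathcal{O}_{p^{v_p(N)}}\bigr|$
be the common $p$-adic factor you identify. As you correctly note, this factor appears once in the prime-by-prime expansion of $\rho_f(d_1d_2)$, once in that of $\rho_f(d_1)$, and once in that of $\rho_f(d_2)$. Consequently $\rho_f(d_1)\rho_f(d_2)$ carries $c_p^2$ while $\rho_f(d_1d_2)$ carries only $c_p$, so the desired identity forces $c_p\in\{0,1\}$; it is \emph{not} automatic, and the claim that ``the genuine multiplicativity is carried by the $d$-part'' does not explain it. What rescues the argument is the specific way $N$ was chosen: $N=\gcd g(\mathcal{O})$ means $g(x)\equiv 0\ (\mathrm{mod}\ N)$ for \emph{every} $x\in\mathcal{O}$, hence $\mathcal{O}^g_N=\mathcal{O}_N$ and in particular $\mathcal{O}^g_{p^{v_p(N)}}=\mathcal{O}_{p^{v_p(N)}}$, so $c_p=1$. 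The paper makes exactly this point — the proof hinges on the one-line observation $|\mathcal{O}^g_N|=|\mathcal{O}_N|$ — and without it your prime-by-prime argument would fail for any composite $N$. So the missing ingredient is not organizational; it is the fact that $N$ divides $g(x)$ identically on $\mathcal{O}$.

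Your treatment of the second assertion is correct and matches the paper: nonnegativity is trivial, and strictness $\rho_f(p)<p$ comes from weak primitivity of $f$, which yields $x\in\mathcal{O}$ with $p\nmid f(x)=g(x)/N$, so the reduction of $x$ lies in $\mathcal{O}_{pN}\setminus\mathcal{O}^g_{pN}$.
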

\begin{proof}
Let $d = d_1 d_2$ with $(d_1 , d_2) = 1$ and write $N = N_1 N_2$ with
$(N_1 , d_2) = 1$, $(N_2 , d_1) = 1$ and $(N_1 , N_2 ) = 1$. Clearly :
\[
| \mathcal{O}_{d_1 d_2 N_1 N_2} | \, = \,
 \frac{|\mathcal{O}_{d_1 N_1 N_2}| \, | \mathcal{O}_{d_2 N_2 N_1}
|}{| \mathcal{O}_N |}\,=\,
 \frac{| \mathcal{O}_{d_1 N} | \, | \mathcal{O}_{d_2N} | }{
 | \mathcal{O}_N |}\, \, . 
\]
Furthermore :
\[
\begin{array}{lll}
| \mathcal{O}^g_{d_1 d_2 N_1 N_2} | & = & | \mathcal{O}^g_{d_1N_1} | \,
. \, | \mathcal{O}^g_{d_2 N_2 } | \\
\\
& = & \frac{| \mathcal{O}^g_{d_1 N_1 N_2}|}{| \mathcal{O}^g_{N_2} |} \,
. \, 
\frac{| \mathcal{O}^g_{d_2N_2 N_1}|}{| \mathcal{O}^g_{N_1} |} \\
\\
& = & \frac{| \mathcal{O}^g_{d_1 N} | \, | \mathcal{O}^g_{d_2N} | }{
 | \mathcal{O}^g_N |}\, \, .
\end{array}
\]
and hence
\[
\begin{array}{lll}
\rho_f ( d_1 d_2) & = & 
\frac{| \mathcal{O}^g_{d_1d_2 N} |}
{| \mathcal{O}_{d_1 d_2 N |}} \, = \,
\frac{| \mathcal{O}^g_{d_1 N} | \, | \mathcal{O}^g_{d_2 N} |}
{| \mathcal{O}^g_N |} \, \frac{| \mathcal{O}_N |}{| \mathcal{O}_{d_1 N}
| \, | \mathcal{O}_{d_2 N} |}  \\
\\
& = & \rho_f ( d_1 ) \, \rho_f ( d_2 )  
\end{array}
\]
since  \hspace{1.25in}  $| \mathcal{O}^g_N | \,  =  \, | \mathcal{O}_N |$.

For $d = p$ there is $x \in \mathcal{O}$ s.t.
\[
\frac{g ( x )}{N} \, \not\equiv \, 0 ( p ) \ \mbox {since}  \ g c d ( g (
\mathcal{O})) \, = \, N \, . \]
Hence $x \notin \mathcal{O}^g_{d N} \Longrightarrow \, \rho_f ( p ) \, <
\, p$. 
\end{proof}

Factoring $f \in \mathbb{Q} [V]$ into $t=t(f)$ irreducibles we get $f = f_1 f_2 \ldots
f_t$, where we are assuming further that 
each $f_j$ is irreducible in $\mathbb{C} [
V]$ and that the $f_j$'s are distinct.  According to E. Noether's
Theorem [No] there is a finite set of primes $S$ such that for $p
\notin S$, $V$ is absolutely irreducible over $\mathbb{Z} \slash p
\mathbb{Z} = \mathbb{F}_p$.  By increasing the set $S$ if necessary, we
can assume that for $p \notin S$ the equations defining $G$ also yield
an absolutely irreducible variety over $\mathbb{F}_p$.  According to
Lang's Theorem [La] we have that
\begin{equation}
V ( \mathbb{Z} \slash p \mathbb{Z} ) = V ( \mathbb{F}_p ) = \mathbb{G} (
\mathbb{F}_p ) \, .\,  v \,.
\end{equation}
By strong approximation we have (again increasing $S$ if needed) that
for $p \notin S$
\begin{equation}
\mathbb{G} ( \mathbb{Z} \slash p \mathbb{Z}) \, = \, \Gamma_p \, .
\end{equation}
and hence that
\begin{equation}
\mathcal{O}_p \, = \, V ( \mathbb{F}_p ) \hspace{.25in} 
\mbox{for} \ \ p \notin S \, .
\end{equation}
Also when $p$ does not divide  $ N$ we have that 
\[
\mathcal{O}^f_p \, = \, \{ x \in \mathcal{O}_p : \, f ( x ) \, \equiv \,
0 \, ( \mbox{mod}p) \}
\]
is well defined and
\begin{equation}
\frac{| \mathcal{O}^f_p |}{ | {\displaystyle \mathcal{O}_p |}} \, = \, \frac{|
\mathcal{O}^g_{p N}|}{ |{\displaystyle \mathcal{O}_{pN} |}} \, .
\end{equation}

Finally for $1 \leq j \leq t(f)$ let 
\begin{equation}
W_j \, = \, \{ x \in V: \, f_j ( x ) \, = \, 0 \} \, .
\end{equation}

$W_j$ is an (absolutely irreducible) affine variety defined over
$\mathbb{Q}$ of dimension $\dim V - 1$.  Hence again by Noether's
Theorem $W_j$ is absolutely irreducible over $\mathbb{F}_p$ for $p$
outside $S^\prime$ say.  For such $p$ we apply a weak form of the Weil
conjectures to $W_j$ (see [L-W] or [Sch] for an elementary
treatment) to conclude that
\begin{equation}
| W_j ( \mathbb{F}_p ) | \, = \, p^{\dim V - 1} \, + \, {O}
\left(
p^{\dim V - \frac{3}{2}} \right) \, .
\end{equation} 
where the implied constant depends on $j$ only.

Furthermore, since the $f_j$'s are distinct irreducibles in
$\mathbb{C}
[ V ]$, we have for $i\neq j$  
\begin{equation}
\dim ( W_i \cap W_j ) \, \leq \, \dim V - 2 \, .
\end{equation}
Hence for $p \notin S \cup S^\prime$ we have
\begin{equation}
\begin{array}{lll}
| \mathcal{O}^f_p ) | & = & \sum\limits_{j \, = \, 1}^{t(f)}
\, | W_j ( \mathbb{F}_p ) | \, + \, O \left( p^{\dim V-2} \right)
\\
\\
& = & t(f) p^{\dim V-1} \, + \, O \left( p^{\dim V - \frac{3}{2}} \right)
\end{array}
\end{equation}
and similarly
\begin{equation}
| \mathcal{O}_p ) \, | \, = \, | \, V ( \mathbb{F}_p ) | \,
= \, p^{\dim V} \, + \, O \left( p^{\dim V - \frac{1}{2}} \right) \, .
\end{equation}
Combining the above, we have that for $ p \notin S \cup S^\prime$
\begin{equation}
\frac{| \mathcal{O}^f_p |}{| \mathcal{O}_p|} \, = \, \frac{t(f)}{p} \, + \,
O \left( p^{- \frac{3}{2}} \right)
\end{equation}
and hence that 
\begin{equation}
| \rho_f ( p ) \, - t(f) \, | \, \leq \, C p^{ - \frac{1}{2}}
\end{equation}
where $C$ depends only on $\mathcal{O}$ and $f$.

\medskip
\noindent
\underline{\sf 4.2 \ Applying the sieve}

As in (2.9) we define the sequence $a_k ( T )$, $k \geq 0$ by
\begin{equation}
a_k ( T ) \, = \; \sum\limits_{\gamma\in \Gamma\,;\,\parallel \gamma \parallel \, \leq \, T
\atop{| f ( \gamma v ) | \, = \, k}} 1
\, .
\end{equation}

The sums on progressions are then, 
for $d \geq 1$ square free
\begin{equation}
\sum\limits_{k \, \equiv \, 0 ( d )} \, a_k ( T ) \, = \,
\sum\limits_{\gamma \in \Gamma\,;\,\parallel \gamma \parallel \, \leq \, T\atop{f ( \gamma v ) \, \equiv \, 0 ( d )}} \!\!\! 1
\end{equation}
\begin{equation}
= \; \sum\limits_{\delta \, \in \Gamma \slash \Gamma ( dN  )\atop{g (
\delta v ) \, \equiv \, 0 ( dN  )}}  \; 
\sum\limits_{\gamma \, \in \Gamma ( dN  )\atop{\parallel \delta \gamma
\parallel \, \leq \, T}}1
\end{equation}
where $\Gamma ( q )$ is the congruence subgroup of $\Gamma$ of level
$q$ and $f=g/N$ as in \S 4.1.

According to Theorem 3.2, (4.21) becomes 
\[
= \; \sum\limits_{\delta \, \in \Gamma \slash \Gamma ( dN  )\atop{g (
\delta v ) \, \equiv \, 0 ( dN  )}} \
\left( 
\frac{ \mbox{vol} \{ \parallel w \parallel \, \leq \, T \} }
{[\Gamma: \Gamma ( dN  ) ]} \, + \, O_\epsilon \,
\left(
T^{a - \frac{\theta}{1 + \dim G} \, + \, \epsilon} 
\right) \right)
\]
\[
\,=\,X \cdot \sum\limits_{\delta \, \in \, \Gamma \slash \Gamma ( dN)\atop{g (
\delta v ) \, \equiv \, 0 ( dN  )}} \;
\frac{1}{[ \Gamma: \Gamma ( dN  ) ]} \, + \, O_\epsilon \left(
| \mathcal{O}^g_{dN} | \, T^{a - \frac{\theta}{1 + \dim G} \, + \,
\epsilon} \right)
\]
where
\begin{equation}
X \, = \, \sum\limits_{k\in \mathbb{N}} \, a_k ( T ) \, .
\end{equation}
Now
\[
\mathcal{O}_{dN} \, = \, \Gamma_{dN }\,.\, v (\mbox{mod }dN )
\]
and hence
\begin{equation}
| \mathcal{O}_{dN} | \, = \, \frac{| \Gamma_{dN} |}{|H_{dN}|}
\end{equation}
where $H_{dN}$ is the stabilizer of $v$ in $\Gamma_{dN}$.  
Also
$\Gamma \slash {\Gamma ( dN)} \, \cong \, \Gamma_{dN}$ and so 
\begin{equation}
| \{ \delta \, \in \, \Gamma_{dN} : \, g ( \delta v ) \, \equiv \, 0 (
dN ) \} | \, = \, | \mathcal{O}^g_{dN} | \, | H_{dN} | \, . 
\end{equation}
Thus (4.20) becomes
\begin{equation}
\sum\limits_{k \, \equiv \, 0 ( d )} \, a_k ( T ) \, = \, 
\frac{X | \mathcal{O}^g_{dN} | \, . \, | H_{dN} |}{| \Gamma_{dN} |}
\, + \, O_\epsilon \left(
d^{\dim G} \, T^{a - \frac{\theta}{1 + \dim G} \, + \epsilon}
\right)
\end{equation}
where we have used $| \mathcal{O}^g_{dN} | \ll d^{\dim G}$, though for
later note that from (4.15) and (4.6)  
\begin{equation}
| \mathcal{O}^g_{dN} | \ll d^{\dim G - 1} \, .
\end{equation}
Hence from (4.23) and (4.25) we have 
\begin{equation}
\sum\limits_{k \, \equiv \, 0 ( d )} \, a_k ( T ) \, = \,
\frac{\rho_f ( d )}{d} \, X \, + \, R ( \mathcal{A} , d )
\end{equation}
with
\begin{equation}
\rho_f ( d ) \, = \, \frac{d | \mathcal{O}^g_{dN} |}{|
\mathcal{O}_{dN}|}
\end{equation}
and
\begin{equation}
\begin{array}{ll}
| R( \mathcal{A} , d ) | & \mathop{\ll}\limits_{\epsilon} \, 
d^{\dim G} \, T^{a \left(1- \frac{\theta}{a(1 + \dim G)}\right) \, + \, \epsilon} \\
\\
&\mathop{\ll}\limits_{\epsilon} \, 
d^{\dim G} \, X^{1 - \frac{\theta}{a (
1 + \dim G )} \, + \, \epsilon} \\
\\
& \mathop{\ll}\limits_{\epsilon} \,  d^{\dim G} \, X^{1 - \frac{1}{2n_e (1 + \dim G )} \, + \, \epsilon}
\end{array}
\end{equation}
according to Theorem 3.3, since $\theta=a/2n_e$.

By Proposition 4.1, (4.27) establishes axiom $(A_0)$ with $B$ the empty
set.
As for the level distribution $(A_1)$ we have from (4.29) that
\begin{equation}
\begin{array}{ll}
\sum\limits_{d   \leq \, D} \, | R( \mathcal{A} , d )| & 
\mathop{\ll}\limits_{\epsilon} \, D^{1 + \dim G} \, X^{ 1 - \frac{1}{2n_e
( 1 + \dim G)} \, + \, \epsilon} \\
\\
& = \, O ( X^{1- \zeta})
\end{array}
\end{equation}
as long as
\begin{equation}
D \, \leq \, X^\tau \ \mbox{with} \ \tau \, < \, \frac{1}{2n_e ( 1 + \dim
G )^2} \, .
\end{equation}
Finally axiom $(A_2)$ follows with a suitable $C_2 = C_2 ( \mathcal{O},
f )$ from (4.18).  We apply the combinatorial sieve in the form (2.8) to
conclude that for 
\begin{equation}
z = X^\alpha \ \mbox{ with} \ \alpha = \frac{1}{9 t(f) ( 1 + \dim G )^2 \cdot 2n_e}
\end{equation}
and for
\[
P \, = \, \prod\limits_{p \le z} \, p \, ,
\]
with $X$ large enough,  
\begin{equation}
\frac{X}{( \log X )^{t(f)}} \, \ll \, S ( \mathcal{A} , P ) \, \ll \, \frac{X}{( \log X
)^{t(f)}}
\end{equation}
where the implied constants depend only on $f$ and the orbit $\mathcal{O}$.

\medskip
\noindent
\underline{\sf 4.3 \ Completion of proofs of Theorems 1.6, 1.7 and
Corollary 1.8}

We begin by establishing the following two Lemmas.

\setcounter{Lem}{1}
\begin{Lem}
Assume that  $ h \in \mathbb{Q} [ x_1 , \ldots , x_n ]$ does not vanish identically when restricted to
 $V = G\, .\,  v$. Then there is $\delta = \delta ( h
) > 0$ such that
\[
| \{ \gamma \in \Gamma: \, 
\parallel \gamma \parallel \, \leq \, T, \, h
( \gamma v ) \, = \, 0 \} | \, \ll \, T^{a - \delta} \, . \]
\end{Lem}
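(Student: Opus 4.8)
The plan is to sieve by one slowly growing prime. Put $W=\{x\in V:h(x)=0\}$; since $h$ does not vanish identically on the absolutely irreducible variety $V$, the set $W$ is a proper Zariski-closed subset, so $\dim W\le\dim V-1=\dim G-1$. Multiplying $h$ by the least common denominator of its coefficients changes neither its zero locus on $V(\mathbb{Q})$ nor, away from finitely many $p$, its reduction mod $p$, so I may assume $h\in\mathbb{Z}[x_1,\dots,x_n]$. The elementary observation driving the argument is that $h(\gamma v)=0$ forces $h(\gamma v)\equiv 0\ (\mathrm{mod}\ p)$ for every prime $p$, i.e.\ the reduction $\overline{\gamma v}$ lies in $\mathcal{O}_p^{\,h}$; hence for every prime $p$,
\[
\#\{\gamma\in\Gamma:\ \|\gamma\|\le T,\ h(\gamma v)=0\}\ \le\ \#\{\gamma\in\Gamma:\ \|\gamma\|\le T,\ \overline{\gamma v}\in\mathcal{O}_p^{\,h}\}\ =:\ N^{(p)}(T).
\]

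To bound $N^{(p)}(T)$ I would rerun the congruential count of \S4.2 verbatim, with $h$ in place of $g$ and the single modulus $d=p$. Partitioning $\Gamma$ into cosets of the normal subgroup $\Gamma(p)$, the condition $\overline{\gamma v}\in\mathcal{O}_p^{\,h}$ depends only on the coset and is met by exactly $|\mathcal{O}_p^{\,h}|\cdot|H_p|$ of them, $H_p$ being the stabilizer of $\overline v$ in $\Gamma_p$. Applying the uniform lattice-point estimate of Theorem 3.2 to each such coset and summing, just as in the derivation of (4.26), and inserting $\mathrm{vol}\{g:\|g\|\le T\}\asymp T^a(\log T)^b$ from Theorem 3.1, yields
\[
N^{(p)}(T)\ \ll\ \frac{|\mathcal{O}_p^{\,h}|}{|\mathcal{O}_p|}\,T^a(\log T)^b\ +\ |\mathcal{O}_p^{\,h}|\,|H_p|\;T^{\,a-\frac{\theta}{1+\dim G}+\eta},
\]
where I used that $\Gamma_p$ acts on $\mathcal{O}_p$ with stabilizers of size $|H_p|$, so $|\mathcal{O}_p^{\,h}|\,|H_p|/[\Gamma:\Gamma(p)]=|\mathcal{O}_p^{\,h}|/|\mathcal{O}_p|$, together with $|\mathcal{O}_p^{\,h}|\,|H_p|\le[\Gamma:\Gamma(p)]\ll p^{\dim G}$. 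For $p$ outside a fixed finite set $S_0$ (absorbing the exceptional primes $S,S'$ of \S4.1 and those dividing the content of $h$) we have $\mathcal{O}_p=V(\mathbb{F}_p)$ with $|\mathcal{O}_p|=p^{\dim V}(1+O(p^{-1/2}))$ by (4.17), while $\mathcal{O}_p^{\,h}\subseteq W(\mathbb{F}_p)$ and the elementary upper bound $|W(\mathbb{F}_p)|\ll p^{\dim V-1}$ (a weak form of Lang--Weil for the variety $W$, of dimension $\le\dim V-1$) holds; hence $|\mathcal{O}_p^{\,h}|/|\mathcal{O}_p|\ll 1/p$. Thus for $p\notin S_0$,
\[
N^{(p)}(T)\ \ll\ \frac{T^a(\log T)^b}{p}\ +\ p^{\dim G}\;T^{\,a-\frac{\theta}{1+\dim G}+\eta}.
\]

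It remains to choose $p$ well. By Bertrand's postulate pick a prime with $T^{\beta}\le p\le 2T^{\beta}$, where $\beta=\frac{\theta}{2\,\dim G\,(1+\dim G)}$; since $p\to\infty$ with $T$, this $p$ automatically avoids $S_0$ once $T$ is large. Then the first term above is $\ll T^{a-\beta}(\log T)^b$ and the second is $\ll T^{\,a-\frac{\theta}{2(1+\dim G)}+\eta}$, so fixing $\eta<\beta/2$ and setting $\delta:=\beta/2>0$ gives $\#\{\gamma\in\Gamma:\|\gamma\|\le T,\ h(\gamma v)=0\}\ll T^{a-\delta}$, as required. I expect the single genuinely substantive ingredient to be the passage from a residue count mod $p$ to an honest lattice-point count with error term \emph{uniform in the growing modulus} $p$ --- but that is exactly what Theorem 3.2 provides; the rest (the mod-$p$ orbit structure from \S4.1, the volume asymptotics of Theorem 3.1, and the crude point count on $W$) is routine.
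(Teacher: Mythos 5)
Your argument is correct, and it follows the same basic strategy as the paper: bound the count of exact zeros of $h$ by the count of solutions to $h\equiv 0\pmod p$ for a single auxiliary prime $p$, estimate the latter via the uniform congruential count from \S\S3--4 (Theorem 3.2 and the derivation around (4.25)), bound $|\mathcal{O}_p^h|/|\mathcal{O}_p|\ll 1/p$ by a point count on the proper subvariety $W$, and then let $p$ grow as a small power of $T$ to get a power saving. The only place you deviate is in how the auxiliary prime and the $\mathbb{F}_p$-point bound are obtained: the paper first factors $h$ into absolutely irreducible factors over a finite extension $E$, restricts to primes $p$ that split completely in $E$ (so the factors and the Lang--Weil bound make sense over $\mathbb{F}_p$), and therefore invokes Chebotarev to find such a prime in the dyadic window. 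You instead note that only an upper bound is needed, so one can directly use the elementary estimate $|W(\mathbb{F}_p)|\ll p^{\dim V-1}$ for a $\mathbb{Q}$-variety $W$ of dimension $\le\dim V-1$ and bounded degree (valid for $p$ outside a finite exceptional set, no geometric irreducibility required), which lets you invoke Bertrand's postulate rather than Chebotarev. That is a clean and legitimate shortcut; it trades the paper's balanced choice $\alpha=\theta/(1+\dim G)^2$ (which optimizes the two error terms) for a slightly less sharp $\beta$, but the Lemma only asks for some $\delta>0$, so both succeed.

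Two very small notational points: your reference to ``(4.26)'' should really be to the main congruential identity (4.25) in the paper (you then incorporate $|\mathcal{O}^g_{dN}|\ll d^{\dim G-1}$, which is (4.26), as a refinement); and the inclusion $\mathcal{O}_p^h\subseteq W(\mathbb{F}_p)$ is an equality $\mathcal{O}_p^h = W(\mathbb{F}_p)$ for $p$ outside the finite set where $\mathcal{O}_p=V(\mathbb{F}_p)$ (Eq.\ (4.10)), which you have already excluded via $S_0$, so this is fine.
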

\noindent
{\it Proof.}
We may assume that $h$ is not constant on $V$ and hence there is a
finite extension $E$ of $\mathbb{Q}$ over which $h$ factors into $h =
h_1 h_2 \ldots h_\nu$ where each $h_j$ is absolutely irreducible in
$E[V]$.  Hence for $p$ large enough and $\mathcal{P}$ a prime ideal in the ring of integers 
$\mathcal{I}_E$  of $E$ with $\mathcal{P}| ( p
)$ we have 
\[
\# \{ x \in V ( \mathcal{I}_E \slash \mathcal{P} ) : \, h_j ( x ) \, = \, 0 \} \,
\ll \, N ( \mathcal{P})^{\dim V-1} \, . \]
\begin{equation}
\hspace{-3.75in} \mbox{The implied constant depending on} \ h \, .
\end{equation}
Assume further that $p$ splits completely in $E$ so that $\mathcal{I}_E
\slash \mathcal{P} \, \cong \, \mathbb{Z} \slash p \mathbb{Z}$. Then 
\begin{equation}
\# \{ x \in V ( \mathbb{Z} \slash p \mathbb{Z}): \, h ( x ) \, \equiv \,
0 ( p ) \} \, \ll \, p^{\dim V - 1} \,.
\end{equation}
Let $T$ be the large parameter in the Lemma and choose $p$ as above with
$T^\alpha \slash 2 \, \leq \, p \, \leq \, 2 T^\alpha$ for $\alpha > 0$
small and to be chosen momentarily.  Such a $p$ exists by Chebotarev's
density theorem [Che].  With this choice we have that
\begin{equation}
\begin{array}{ll}
| \{ \gamma \in \Gamma: \, \parallel \gamma \parallel \, \leq
\, T \,,\, h ( \gamma v ) \, = \, 0 \} | \\
\\
\,  \leq \, | \{ \gamma \in \Gamma : \, \parallel \gamma \parallel \, \leq
\, T \, , \, h ( \gamma v ) \, \equiv \, 0 ( p ) \} | \\
\\
\,  = \, \frac{| \mathcal{O}^h_p |}{| \mathcal{O}_p|} \, X \, + \, O_\epsilon \left(
T^{a - \frac{\theta}{1 + \dim G} \, + \, \epsilon} \, p^{\dim G }
\right) 
\end{array}
\end{equation}
on using (4.25).

Coupled with (4.35) this gives 
\begin{equation}
\begin{array}{l}
| \{ \gamma \in \Gamma: \, \parallel \gamma \parallel \, \leq \, T \, ,
\, h ( \gamma v ) \, = \, 0 \} | \\
\\
\, \mathop{\ll}\limits_{\epsilon} \, \frac{T^{a + \epsilon}}{p} \, + \,
T^{a - \frac{\theta}{1 + \dim G} + \epsilon} \, p^{\dim G} \\
\\
\, \ll \, T^{a - \alpha} \, , \hspace{.5em}  
\mbox{where we choose} \ \alpha \, = \, \frac{\theta}{(1 + \dim G)^2} \,
.
\end{array}
\end{equation}
\begin{Lem}
Let $f\, = \, f_1 f_2 \ldots f_t$,  with $f_j \in \mathbb{Z}[ x_1,\ldots, x_n ]$
irreducible as in Theorem 1.6, $1 \le j \le t(f)$.  Then there is $\delta _1 > 0$ such that for
any $m \in \mathbb{Z}$ and any $1 \leq j \leq t(f)$,
\[
| \{ \gamma \in \Gamma: \, \parallel \gamma \parallel \, \leq \, T \, ,
\, f_j ( \gamma v ) \, = \, m \} | \, \ll \, T^{a - \delta_1}
\]
the implied constant depending only on $f$ and $\Gamma$.
\end{Lem}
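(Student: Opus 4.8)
The plan is to transcribe the proof of Lemma 4.2, the only genuinely new point being that every estimate must be made uniform in $m$ (the case $m=0$ is literally Lemma 4.2 applied to $h=f_j$). Since $f_j$ has integer coefficients, $f_j(\gamma v)=m$ forces $f_j(\gamma v)\equiv m\ (\mathrm{mod}\ p)$ for every prime $p$, so
\[
|\{\gamma\in\Gamma:\ \parallel\gamma\parallel\,\le\,T,\ f_j(\gamma v)=m\}|\ \le\ |\{\gamma\in\Gamma:\ \parallel\gamma\parallel\,\le\,T,\ f_j(\gamma v)\equiv m\ (p)\}|.
\]
The condition $f_j(\gamma v)\equiv m\ (p)$ depends only on the coset $\gamma\Gamma(p)$, since $\Gamma(p)$ fixes $v$ modulo $p$; decomposing $\Gamma$ into cosets of $\Gamma(p)$ and applying Theorem 3.2 to each coset exactly as in the derivation of (4.25) (now with $d=p$ and no auxiliary modulus, as $f_j\in\mathbb{Z}[x_1,\dots,x_n]$), the right-hand side equals
\[
\frac{|\mathcal{O}^{f_j-m}_p|}{|\mathcal{O}_p|}\,X\ +\ O_\epsilon\!\left(|\mathcal{O}^{f_j-m}_p|\cdot T^{\,a-\frac{\theta}{1+\dim G}+\epsilon}\right),
\]
where $\mathcal{O}^{f_j-m}_p=\{x\in\mathcal{O}_p:\ f_j(x)\equiv m\ (p)\}$ and $X=\#\{\gamma\in\Gamma:\parallel\gamma\parallel\le T\}\ll T^{\,a+\epsilon}$.

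The heart of the matter is the bound $|\mathcal{O}^{f_j-m}_p|\ll p^{\dim V-1}$ with an implied constant \emph{independent of $m$}. For $p$ outside the finite set $S\cup S'$ of excluded primes from \S4.1 we have $\mathcal{O}_p=V(\mathbb{F}_p)$, which is absolutely irreducible of dimension $\dim V=\dim G$, with $|\mathcal{O}_p|=p^{\dim V}+O(p^{\dim V-1/2})\gg p^{\dim V}$ by (4.16). Since $f_j$ is non-constant on $V$ and $\mathcal{O}$ is Zariski dense in $V$, fix once and for all two points $P,Q\in\mathcal{O}$ with $f_j(P)\neq f_j(Q)$; then for every prime $p\nmid(f_j(P)-f_j(Q))$ the function $f_j$ is non-constant on $V(\mathbb{F}_p)$, hence $f_j-m$ does not vanish identically on $V(\mathbb{F}_p)$ for any value of $m\in\mathbb{F}_p$. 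For such $p$ the elementary upper bound for the number of $\mathbb{F}_p$-points on a proper hypersurface section of a variety of bounded degree (the upper half of the Lang--Weil estimates, see [L-W], [Sch]) gives $|\{x\in V(\mathbb{F}_p):f_j(x)=m\}|\le(\deg f_j)(\deg V)\,p^{\dim V-1}$ uniformly in $m$; crucially no absolute irreducibility of $f_j-m$ is needed, so the possible failure of irreducibility of special fibres is harmless. Moreover, for $p$ large the stabilizer of $v$ in $\Gamma_p=\mathbb{G}(\mathbb{F}_p)$ is trivial, so $|\mathcal{O}^{f_j-m}_p|$ is exactly the count above. Thus $|\mathcal{O}^{f_j-m}_p|/|\mathcal{O}_p|\ll_f 1/p$ and $|\mathcal{O}^{f_j-m}_p|\ll_f p^{\dim G-1}$, with constants not depending on $m$.

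Substituting these bounds, the count is $\ll_\epsilon T^{\,a+\epsilon}/p+p^{\dim G}\,T^{\,a-\frac{\theta}{1+\dim G}+\epsilon}$, which is precisely the estimate at line (4.37) in the proof of Lemma 4.2, with $h$ replaced by $f_j-m$. Choosing a prime $p$ with $T^\alpha/2\le p\le 2T^\alpha$ and $\alpha=\theta/(1+\dim G)^2$ --- available for $T$ large by Bertrand's postulate once the finitely many bad primes in $S\cup S'\cup\{p:p\mid f_j(P)-f_j(Q)\}$ are avoided, none of which depends on $m$ --- and taking $\epsilon=\alpha/2$, this is $\ll T^{\,a-\delta_1}$ with $\delta_1=\theta/(2(1+\dim G)^2)>0$. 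The implied constant depends only on $f$ (its degree and the fixed bad set) and on $\Gamma,\mathcal{O}$ (the lattice-point constants, $S$ and $S'$), and not on $m$, as asserted. The only thing to be careful about is exactly this uniformity bookkeeping: the Schwartz--Zippel constant and the excluded set of primes must be fixed once and for all independently of $m$, after which the whole argument is a transcription of Lemma 4.2.
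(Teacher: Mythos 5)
Your proof is correct and follows the paper's argument: apply the proof of Lemma 4.2 with $h=f_j-m$, identify the Lang--Weil point count (4.34) as the only $m$-dependent step, and establish its uniformity in $m$. Your route to that uniformity is a small but genuine simplification over the paper's --- by invoking the degree-only upper bound for a proper hypersurface section of $V(\mathbb{F}_p)$ (which requires no irreducibility of $f_j-m$), together with the $P,Q$ argument to guarantee $f_j-m$ is a proper section modulo $p$ for every $m$, you sidestep the paper's appeal to generic irreducibility of $f_j-m$ over $\bar{\mathbb{Q}}$ and the number-field/Chebotarev apparatus inherited from Lemma 4.2, replacing the latter with Bertrand's postulate.
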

\begin{proof}
By assumption $f_j$ is not constant when restricted to $V$.  Hence by
Lemma 4.2 with $g = f_j - m$ we get Lemma 4.3 but potentially the
implied constant depends on $m$.  The only place where this dependence
may enter is in 4.34 and for $m$ outside a finite set $f_j - m$ is
irreducible over $\bar{\mathbb{Q}}$.  The equations defining $f_j - m =
0$ and $V$ will be irreducible over $\bar{\mathbb{F}}_p$ for $p$ outside
a fixed finite set and they are of a fixed degree in the variables $(x_1 ,
\ldots , x_n)$.  Hence by a Lemma of Lang-Weil [L-W] the upper bound in
(4.34) with $h = f_j - m$ is uniform in $m$.
\end{proof}

\noindent
\underline{\it Proof of Theorem 1.6:}

We choose $\epsilon_1 > 0$ small (but fixed) so that firstly $\epsilon_1
< \delta_1$ where $\delta_1$ is determined by Lemma 4.3.  For $1 \leq j \leq t(f)$ and
$T$ large we have from Lemma 4.3 that 
\begin{equation}
\# \{ \gamma \in \Gamma : \, \parallel \gamma \parallel \, \leq \, T \,
, \, | f_j ( \gamma v ) | \, \leq \, T^{\epsilon_1} \} \, \ll \, T^{a -
\delta_1 \, + \, \epsilon_1} \, .
\end{equation}
\hfill $\square$

Now
\[
\{ 
\gamma \in \Gamma: \, \parallel \gamma \parallel \, \leq \, T \, , \,
f_j ( \gamma v ) \hspace{.9em}  \mbox{is prime} \} \ 
\subset \, 
\mathop{\cup}\limits_{j \, = \, 1}^{t(f)} \, 
\{ 
\gamma \in
\Gamma: \, \parallel \gamma \, \parallel \, \leq \, T \, , \, | f_j (
\gamma v ) | \, \leq \, T^{\epsilon_1} 
\} \,   
{\cup} \] 
\begin{equation}
\left\{ 
\gamma \in \Gamma : \, \parallel \gamma \parallel
\, \leq \, T \, , \, | f_j ( \gamma v ) | \, \geq \, T^{\epsilon_1} \, 
\right.
\\
\\
\left.
\mbox{for each} \ j \\,,\, \mbox{and} \ f_j ( \gamma v ) \ 
\mbox{is prime}
\right\} \,. 
\end{equation}

 By (4.38) the cardinality of the union of the first  $t(f)$ sets above is at most $ O ( T^{a - \delta_1 + \epsilon_1})$. The last set on the right hand side of (4.39) is contained in 
\[
\{ \gamma \in \Gamma: \, \parallel \gamma \parallel \, \leq \, T \, , \,
( f ( \gamma v ) \, , \, P_z ) \, = \, 1 \}
\]
where
\begin{equation}
P_z \, = \prod_{p \, \leq \, z }  \,  \, z \, = \,
T^{\epsilon_1} \, .
\end{equation}
The cardinality of the last set is $S ( \mathcal{A} , P_z )$ and if $\epsilon_1 <
\alpha$ where $\alpha$ is the level distribution in (4.32) then we may
apply (4.33) to conclude that 
\[
\begin{array}{l}
\{ 
\gamma \in \Gamma: \, \parallel \gamma \parallel \, \leq \, T \, , \,
f_j ( \gamma v ) \ \mbox{is prime} 
\} \\
\\
\hspace{.10in } \ll \, T^{a - \delta_1 \, + \, \epsilon_1} \, + \, 
 \frac{X}{( \log X )^{t(f)}} \, \ll \, \frac{X}{( \log X )^{t(f)}} \, .
\end{array}
\]
This completes the proof of Theorem 1.6.
\hfill $\square$

\noindent
\underline{\it Proof of Theorem 1.7:}

Taking $\alpha$ as in (4.32) and $z = X^\alpha$, we have that
\begin{equation}
\sum\limits_{\parallel \gamma \parallel \, \leq \, T\atop{(f ( \gamma v )
\, , \, P_z ) \, = \, 1}} 1 \: \gg \, \frac{X}{( \log X )^{t(f)}}
\end{equation}
where $P_z \, = \, \prod\limits_{p \, \leq \, z} \, p$. 

\noindent
Now any point $\gamma v \in \mathcal{O}$ which occurs in the sum in
(4.41) has $| \gamma v | \, \ll \, T$ (where $| \ |$ is the usual
Euclidean norm on $\mathbb{R}^n$) and hence $| f (\gamma v ) | \, \ll \,
T^{\text{deg }f}$.  On the other hand for such a point 
$\gamma v \in \mathcal{O}$ in the sum, $f ( \gamma v )$ has all its
prime factors at least $z \gg T^{a \alpha}=X^\alpha$.  It follows that for such a point 
$\gamma v$, $f ( \gamma v )$ has at most
\[
r \, = \, \frac{\mbox{deg }f}{a\alpha } \, = \, \frac{9 t(f) ( 1 \, + \, \dim
G)^2\cdot 2n_e(\Gamma) \mbox{deg }f}{a}
\]
prime factors. \hfill $\square$

\noindent
\underline{\it Proof of Corollary 1.8:}

Suppose by way of contradiction that the points $\gamma v$ that we
produced in the previous paragraph are not Zariski dense in $V$.  Since
$V$ is connected it follows that there is a $h \in \mathbb{Q} [ x_1 ,
\ldots , x_n ]$ which does not vanish identically on $V$ and such that all our points
lie in $V \cap \{ x: h ( x) = 0 \}$.  But by Lemma 4.2 the total number
of points in this intersection with $\parallel \gamma \parallel \, \leq
\, T$ is $O ( T^{a -  \delta})$ with $\delta=\delta(h) > 0$.  This contradicts the
lower bound of $c X \slash ( \log X )^{t(f)}$ (with $c > 0$ fixed) for the
number of points with at most $r$ prime factors that was produced in
Theorem 1.7. \hfill $\square$

Finally we apply the Theorems 1.6 and 1.7  to the case of $n \times n$ integral
matrices of determinant $m$ as in the introduction.  Let $G = SL_n$,
$\Gamma = SL_n ( \mathbb{Z})$ and $v \in {\rm Mat}_{n } (
\mathbb{Z})$ with $\det v \, =\, m \, \ne \, 0$.  We identify ${\rm
Mat}_{n }$ with affine  space $A^M$ ($M = n^2$), with $G$ acting by $x
\rightarrow g x$.  We have $V_{m,n} = G.v$ and $V_{ m,n} (
\mathbb{Z})$ consists of a finite number of $\Gamma$-orbits.  Note that
if $| \ |$ is a norm on ${\rm Mat}_{n \times n} ( \mathbb{R})$ then for
a fixed invertible 
$v$ as above $| g v |$ defines a (vector space) norm on $M_M(\mathbb{R})$ and we may apply Theorems 1.6 and 1.7 to this
setting, namely to the individual orbits $\mathcal{O} = \Gamma . v$ with
$v$ fixed as above.  In this case $\dim G + 1 = n^2$ and $a = n^2 - n$ for any choice of norm as above 
[D-R-S] [G-W][Ma]. Furthermore $p(\Gamma)=2(n-1)$ (see [D-R-S]) and so  $n_e = (n - 1)$ if $n$ is odd, and $n_e=n$ if $n\ge 4$ is even. 
For $n = 2$ we can take $n_e =2$ by [K-S],  see (6.17) . 
Thus Theorem 1.7 yields that for $n$ odd $r$ need only satisfy 
\begin{equation}
r  \, >
\frac{9 . t(f) . n^4. 2( n - 1 )  \mbox{deg }(f)}
{n ( n - 1)} \, = \, 18 . t(f) .n^3. \mbox{deg }(f)
\end{equation} 
and for $n$ even the previous expression is multiplied by $n/(n-1)$. 

Thus for $\mathcal{O} = \Gamma .  v$ and $f$ primitive on $\mathcal{O}$
\begin{equation}
\begin{array}{l}
| \{ x \in \mathcal{O}: \, | x | \, \leq \, T \, , \, f ( x ) \
\mbox{has a most} \,  \ r \ \mbox{prime factors} \} | \\
\\
\hspace{1.25in} \gg \, {T^{n^2 - n}}\Big/{( \log T )^{t(f)}} \, .
\end{array}
\end{equation}
This proves the $\Gamma$-orbit version of Theorem 1.2 and Corollary 1.3.

Now for $m \ne 0$ fixed $V_{ m,n} ( \mathbb{Z})$ consists of a finite
number of such $\Gamma$-orbits, and Theorem 1.1 follows from the $\Gamma$-orbit version.  In order to establish
Theorem 1.2 and Corollary 1.3 for $V_{ m,n} ( \mathbb{Z})$ we need to
show that if $f$ is $V_{m,n} ( \mathbb{Z})$ weakly primitive then there is a
$v \in V_{m,n} ( \mathbb{Z})$ such that if $\mathcal{O} = \Gamma . v$
then $f$ is $\mathcal{O}$-weakly-primitive.

As in the theory of Hecke correspondences on $n$-dimensional lattices 
(see [Te]) we decompose $V_{m,n}$ into $\Gamma$-orbits 
\begin{equation}
V_{m,n} ( \mathbb{Z} ) \, = \, 
\mathop{\coprod}\limits_{j \, = \, 1}^{k( m )} \,
\mathcal{O}^{(j)}
\end{equation} 
with $\mathcal{O}^{(j)} \, = \, \Gamma v_j$, $v_j$ in $V_{m,n}
( \mathbb{Z})$.

Denote by $W$ the union of the $k( m )$ global $\Gamma$-orbits and for $d \geq 1$
let $\mathcal{O}^{(j)}_d$ denote the reduction of $\mathcal{O}^{(j)}
\mod d$ which defines a point in the orbit space $SL_n ( \mathbb{Z} \slash d
\mathbb{Z}) \backslash \mbox{Mat}_n ( \mathbb{Z} \slash d \mathbb{Z} ))$ 
where, $\mbox{Mat}_n$ is the space of $n \times n$ matrices.  Let $W_d$ denote the
reduction of $W$ into this space.  Note that for $d = p$ with $p$ a prime that does not divide $m$, 
$W_p$ consists of a single point, that is to say the orbits
$\mathcal{O}^{(j)}$ all reduce to the same $SL_n ( \mathbb{Z} \slash d
\mathbb{Z})$-orbit modulo $p$.  The key property that we need for these
reductions is that if $(d_1 , d_2) = 1$ then the diagonal embedding
\begin{equation}
\begin{array}{l}
W \longrightarrow ( SL_n ( \mathbb{Z} \slash d_1 \mathbb{Z} ) \backslash \, \mbox{Mat}_n
( \mathbb{Z} \slash d_1 \mathbb{Z} ) ) \, \times \, ( SL_n ( \mathbb{Z} \slash
d_2 \mathbb{Z} ) \backslash \mbox{Mat}_n ( \mathbb{Z} \slash d_2 \mathbb{Z} ))
\end{array}
\end{equation} 
is onto $ W_{d_1} \, \times \, W_{d_2} $. 

With (4.45), the weak primitivity property that we need is established as
follows.  Let $f$ be weakly primitive on $V_{m,n} ( \mathbb{Z})$ and for
simplicity of notation assume that $N = 1$ in Section 1.A and that  $m$
is square free.  So $f \in \mathbb{Z} [ x_{ij} ]$ and for each prime $p
\geq 2$ there is an $x \in V_{m,n} ( \mathbb{Z})$
such that $f ( x ) \ne 0 (\mbox{mod } p )$.  We claim that there is a $J \in
\{ 1 , \ldots , k (  m ) \}$ such that $f$ is weakly primitive for
$\mathcal{O}^{(J)}$.  That is for every prime $p \geq 2$ there is $x \in
\mathcal{O}^{(J)}$ with $f ( x ) \ne 0 ( p )$.

Call a prime $p \geq 2$ good for $\mathcal{O}^{(j)}$ if such an $x$
exists for $p$.  This property is determined locally at $p$.  That is by
strong approximation for $SL_n$, $p$ is good for $\mathcal{O}^{(j)}$ {\sf
iff} the local orbit $SL_n ( \mathbb{Z} \slash p \mathbb{Z} ) v_j$
in $\mbox{Mat}_n ( \mathbb{Z} \slash p \mathbb{Z} )$ contains an $x$ such that $f
( x ) \not\equiv 0 ( p )$.  So the condition is one on
$\mathcal{O}^{(j)}_p$.  Every prime $p$ that does not divide $m$ is good for any
$\mathcal{O}^{(j)}$, $j = 1 , \ldots , k (m)$ since $V_{m,n } (
\mathbb{Z} )$ is good at $p$ and all global orbits reduce to the
same local orbit at such a $p$.  Now write $m = p_1 p_2 \dots p_\ell$, and 
then
\begin{equation}
W \, \longrightarrow \, W_{p_1} \, \times \, W_{p_2} \, \times \dots \,
\times \, W_{p_\ell}
\end{equation}
is onto.

Moreover by our assumption on $f$, for each $p_i , i = 1 , \ldots ,
\ell$, there is $j_{p_i}$ s.t. $\mathcal{O}^{(j_i)}_{p_i}$ is good at
$p_i$.  Hence by (4.46) there is a $J \in \{ 1 , 2 , \ldots , k ( m )
\}$ such that $\mathcal{O}^{(J)}_{p_i}$ is good for each $i = 1 , 2 ,
\ldots , \ell$.  Hence $f$ is weakly primitive for 
$\mathcal{O}^{(J)} = \Gamma \, v_J$.

\medskip
\noindent
\underline{\large \sf \S5. \ Zariski Density of Prime Matrices}

Fix $n \geq 3$.  We say that an $n \times n$ integral matrix is ``prime''
if all of its coordinates are prime numbers. For $m$ an integer 
$V_{m,n}$ denotes the affine variety given by $\{ x\in Mat_n(\mathbb{R} \, ;\,
\, \det x = m \}$.  We are interested in the set of prime matrices being
Zariski dense in $V_{m,n}$.  For this to happen we must clearly allow
$x$ to have all its coordinates $(x_{ij})$ to be odd numbers.  Such a
matrix $x$ satisfies $\det x \equiv 0 ( 2^{n - 1})$.  It turns out that
this is the only obstruction to producing many primes in $V_{m,n} (
\mathbb{Z})$.  As an application of Vinogradov's methods for analyzing
linear equations in primes with three or more variables, we show 
\setcounter{section}{5}
\setcounter{Thm}{0}
\begin{Thm}
Fix $n \geq 3$.  Then the set of prime matrices $x$ in $V_{m,n} ( \mathbb{Z})$ is
Zariski dense in $V_{m,n}$ iff $m \equiv 0 ( 2^{n-1})$.
\end{Thm}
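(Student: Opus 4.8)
The plan is to treat the two implications separately; the easy direction is the necessity of $m\equiv 0\ (\mathrm{mod}\ 2^{n-1})$, and the substance is in the converse, which is where Vinogradov's method enters. For necessity, suppose the prime matrices are Zariski dense in $V_{m,n}$. Since $V_{m,n}$ is irreducible and each $\{x_{ij}=2\}\cap V_{m,n}$ is a proper closed subvariety, the prime matrices lying outside the union of these $n^2$ subvarieties --- that is, the matrices all of whose entries are \emph{odd} primes --- are still Zariski dense, in particular non-empty. But an integral matrix with all entries odd has, after one subtracts its first row from each of its other rows, rows $2,\dots,n$ all even; pulling a factor $2$ out of each of these shows $\det x\in 2^{n-1}\bZ$, so $m\equiv 0\ (\mathrm{mod}\ 2^{n-1})$.

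For sufficiency, assume $m\equiv 0\ (\mathrm{mod}\ 2^{n-1})$. Since $V_{m,n}$ is irreducible it is enough to produce, for each $P\in\bQ[x_{ij}]$ not vanishing identically on $V_{m,n}$, a prime matrix $x\in V_{m,n}$ with $P(x)\neq 0$. Fix $P$ and write a point of $V_{m,n}$ as $(R,y)$, with $R$ the block of the first $n-1$ rows and $y=(x_{n1},\dots,x_{nn})$ the last row. Laplace expansion along the last row turns $\det x=m$ into the single linear equation $\sum_{j=1}^{n}A_j(R)\,x_{nj}=m$ with $A_j(R)=(-1)^{n+j}M_{nj}(R)$, where $M_{nj}(R)$ is the maximal minor of $R$ obtained by deleting the $j$-th column. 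The first step is to choose $R$ with all entries odd primes so that: (a) $R$ has rank $n-1$ and $P$ does not vanish identically on the fibre $L_R=\{(R,y)\in V_{m,n}\}\cong\mathbb{A}^{\,n-1}$; and (b) the linear form $\sum_j A_j(R)x_{nj}=m$ has positive singular series for solutions in primes. Condition (b) amounts to a finite set of congruence conditions on $R$: the $2$-adic one is solvable precisely because $m\equiv 0\ (\mathrm{mod}\ 2^{n-1})$ --- here one uses that an all-odd $(n-1)\times(n-1)$ matrix has determinant divisible by $2^{n-2}$, and that the all-odd $n\times n$ matrix with last row $(1,\dots,1)$ has determinant divisible by $2^{n-1}$, so that after dividing by $2^{n-2}$ the equation $\sum_j(-1)^{n+j}B_j x_{nj}=m/2^{n-2}$ has $m/2^{n-2}$ even, $\sum_j B_j$ even, and (for a suitable $R$) some $B_j$ odd, which suffices --- while the conditions at the finitely many odd $q\mid m$ are readily met for $n\geq 3$. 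Such congruence conditions are compatible both with $R$ having prime entries and with $R$ ranging over a Zariski-dense subset of $\mathrm{Mat}_{(n-1)\times n}$ (fix all but one entry and invoke Dirichlet's theorem), so an $R$ meeting (a) as well can be found. Finally, the coefficient vector $(A_1(R),\dots,A_n(R))$ spans $\ker R$, hence when $R$ has positive entries it cannot have all its coordinates of one sign; so the equation has coefficients of mixed sign.

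With $R$ fixed, $\sum_j A_j(R)x_{nj}=m$ is a single linear equation in the $n\geq 3$ prime variables $x_{nj}$, with non-zero coefficients of mixed sign and positive singular series, so Vinogradov's circle-method treatment of such equations (see [Vi], [Va]) yields
\[
\#\bigl\{\,(x_{n1},\dots,x_{nn}):\ x_{nj}\ \text{prime},\ x_{nj}\leq X,\ \textstyle\sum_j A_j(R)x_{nj}=m\,\bigr\}\ \gg_R\ \frac{X^{\,n-1}}{(\log X)^{n}}
\]
as $X\to\infty$. On the other hand, identifying $L_R$ with $\mathbb{A}^{\,n-1}$ via the coordinates $(x_{n1},\dots,x_{n,n-1})$ (legitimate since $A_n(R)\neq 0$), the restriction of $P$ to $L_R$ is, by (a), a non-zero polynomial of degree at most $\deg P$, so by the Schwartz--Zippel bound it vanishes at $\ll_P X^{\,n-2}$ of the integer points of the box $\{x_{nj}\leq X\}$. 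Since $X^{\,n-2}=o\!\left(X^{\,n-1}/(\log X)^{n}\right)$, for $X$ large at least one of the prime solutions $y$ above has $P(R,y)\neq 0$, giving the desired prime matrix $x=(R,y)\in V_{m,n}$. As this works for every $P$, the prime matrices are Zariski dense.

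The main obstacle is the verification in (b) that the singular series is positive --- in particular the $2$-adic local density --- which is exactly where the hypothesis $m\equiv 0\ (\mathrm{mod}\ 2^{n-1})$ is used, together with the bookkeeping that lets one choose the first $n-1$ rows so that this local solvability, the primality of their entries, and the non-degeneracy condition (a) hold simultaneously. Once this is in place, the circle-method input and the concluding Zariski-density extraction are routine, and the restriction to $n\geq 3$ is forced only by the need for at least three prime variables in the last-row equation.
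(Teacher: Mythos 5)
Your necessity argument and your overall strategy for sufficiency are the same as the paper's: pull off one row, arrange the remaining $(n-1)\times n$ block to have prime entries and to make the resulting last-row linear equation locally solvable in primes, then invoke Vinogradov's circle-method count and conclude Zariski density from the contrast between $T^{n-1}/(\log T)^n$ and $O(T^{n-2})$. (The cosmetic differences — bottom block vs. top block, Schwartz--Zippel on the fibre vs. the paper's direct counting — are immaterial.)

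However, there is a genuine gap at the step you describe as ``fix all but one entry and invoke Dirichlet's theorem.'' The conditions you need on the block $R$ are not congruence conditions on individual entries but divisibility conditions on the maximal minors $A_1(R),\dots,A_n(R)$: you need the $\gcd$ of these minors (together with $m$) to be \emph{exactly} the right power of $2$, you need no ``accidental'' common odd prime factor of all the $A_j$ (which would kill the singular series or make the needed gcd equalities fail), and you need all $A_j\neq 0$. These minors are degree-$(n-1)$ polynomials in the entries of $R$, so freezing all but one entry does not obviously let Dirichlet deliver the precise gcd structure; a careless choice can easily make some large prime divide every $A_j$. This is exactly what the paper's Lemma~5.4 is for: a nontrivial induction on $n$ that builds the block column by column, at each stage imposing congruences modulo a fresh set of primes chosen to control the minors (conditions~(i)--(iii) there), and using Dirichlet at each stage. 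Without an argument at that level of care, your existence claim for a suitable $R$ (simultaneously prime-entried, non-degenerate, locally good, and ranging over a Zariski-dense set) is unsupported, and it is precisely the crux of the theorem. A secondary issue: you assert sufficiency for arbitrary $m\equiv 0\ (\mathrm{mod}\ 2^{n-1})$ and dispose of the odd primes $q\mid m$ in one clause; the paper restricts to $m=2^{n-1}$ and explicitly remarks that the general case ``needs further congruence conditions,'' so this too deserves an actual construction rather than an assertion.
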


The proof of Theorem 5.1 can be extended to prove a special case of the
general local to global conjectures for primes in orbits of actions of
certain groups [B-G-S2].

To state the result, let $\Lambda$ be a finite index subgroup of $SL_n (
\mathbb{Z})$, $n \geq 3$.  For $A$ an $n \times n$ integral matrix with
$\det A = m \ne 0$, let $\mathcal{O}_A$ denote the $\Lambda$-orbit
$\Lambda . A.$ Thus $\mathcal{O}_A$ is contained in $V_{m,n} ( \mathbb{Z})$
and is Zariski-dense in $V_{m,n}$.
\begin{Thm}
The set of prime matrices $x$ in $\mathcal{O}_A$ is Zariski dense in $V_{m,n}$ iff
there are no local congruence obstructions.
\end{Thm}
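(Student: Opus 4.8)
\emph{The easy direction} is immediate: if the prime matrices in $\mathcal{O}_A$ are Zariski dense in $V_{m,n}$ there are infinitely many of them, and for any $q\ge1$ only finitely many can have an entry equal to a prime divisor of $q$; hence infinitely many prime $x\in\mathcal{O}_A$ have $\gcd(\prod_{i,j}x_{ij},q)=1$, so there is no local congruence obstruction. For the converse I would run Vinogradov's circle method just as in the proof of Theorem~5.1, the whole point being to reduce to \emph{one} linear equation in $\ge3$ prime variables. First I would record a congruential description of the orbit: since $n\ge3$, the congruence subgroup property for $SL_n(\mathbb{Z})$ gives $q_0$ with $\Gamma(q_0)\subseteq\Lambda$, and since $\det A=m\ne0$ it follows that for a suitable fixed modulus $Q$ (divisible by $q_0$, by $m^2$, and by every prime $\le B(n)$ for an appropriate $B(n)$) the two conditions $\det x=m$ and $x\equiv x_0\ (\mathrm{mod}\ Q)$ already force $x\in\mathcal{O}_A$, whenever $x_0\in\mathcal{O}_A$. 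Using the no-obstruction hypothesis together with the Chinese Remainder Theorem I would then fix once and for all a representative $x_0\in\mathcal{O}_A$ all of whose entries are prime to $Q$; the construction below produces prime matrices in the single residue class $x\equiv x_0\ (\mathrm{mod}\ Q)$.

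\emph{Linearisation and the circle method.} Expanding $\det x$ along the last row gives $\det x=\sum_{j=1}^n c_j(r_1,\dots,r_{n-1})x_{nj}$, where $c_j$ is $(-1)^{n+j}$ times the maximal minor of the first $n-1$ rows $r_1,\dots,r_{n-1}$ with column $j$ deleted. So once those rows are fixed, $x\in V_{m,n}$ becomes the single linear equation $\sum_j c_jx_{nj}=m$ in the $n\ge3$ unknowns $x_{n1},\dots,x_{nn}$. I would choose $r_1,\dots,r_{n-1}$ with all entries prime, with $r_i\equiv(x_0)_i\ (\mathrm{mod}\ Q)$, and — generically — so that $c=(c_1,\dots,c_n)$ is primitive and not sign-definite. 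For such a choice $\sum_j c_jx_{nj}=m$ is solvable in primes $x_{nj}\equiv(x_0)_{nj}\ (\mathrm{mod}\ Q)$ at every place: at $\infty$ because $c$ is not sign-definite; at $p\nmid Q$ because then $p>B(n)$ and $c$ is primitive, so the extra variable afforded by $n\ge3$ keeps all coordinates prime to $p$; and at $p\mid Q$ because the last row of $x_0$ is itself such a solution mod $Q$ — indeed $\sum_j c_j(x_0)_{nj}\equiv\sum_j\tilde c_j(x_0)_{nj}=\det x_0=m\ (\mathrm{mod}\ Q)$, the minors $c_j$ of $r_1,\dots,r_{n-1}$ agreeing mod $Q$ with the minors $\tilde c_j$ of the first $n-1$ rows of $x_0$. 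Vinogradov's asymptotic for a linear equation in three or more prime variables (see [Vi], [Va]), refined to allow congruence side conditions on the variables, then gives as $X\to\infty$
\[
\#\big\{(x_{n1},\dots,x_{nn}):x_{nj}\ \text{prime},\ x_{nj}\le X,\ x_{nj}\equiv(x_0)_{nj}\ (Q),\ \textstyle\sum_j c_jx_{nj}=m\big\}\ \gg\ \frac{X^{\,n-1}}{(\log X)^{n}},
\]
with a positive implied constant, the singular series and singular integral being positive by the local solvability just verified.

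\emph{From many solutions to Zariski density.} Fix an admissible $(r_1,\dots,r_{n-1})$ and let $H=H_{r_1,\dots,r_{n-1}}\subset V_{m,n}$ be the $(n-1)$-dimensional affine subspace obtained by fixing those rows. The prime completions above lie in $H$, and since any proper subvariety $W\subsetneq H$ has only $O_W(X^{n-2})$ integer points in a box of side $X$ — incompatible with $\gg X^{n-1}/(\log X)^n$ — they are Zariski dense in $H$. Next, the set $S\subset\mathbb{A}^{n(n-1)}$ of admissible first-$(n-1)$-row tuples contains $\gg X^{n(n-1)}/(\log X)^{n(n-1)}$ points in a box of side $X$ (a positive proportion of the prime tuples in the prescribed residue class mod $Q$, by Dirichlet together with the positivity of the local densities for the conditions ``$c$ primitive'' and ``$c$ not sign-definite''), and therefore lies in no proper subvariety of $\mathbb{A}^{n(n-1)}$. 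Finally, the projection $\pi:V_{m,n}\to\mathbb{A}^{n(n-1)}$ onto the first $n-1$ rows is dominant with $(n-1)$-dimensional fibres, so $\overline{\bigcup_{s\in S}H_s}$ has dimension at least $(n^2-n)+(n-1)=\dim V_{m,n}$ and, $V_{m,n}$ being irreducible, equals $V_{m,n}$; since the prime matrices of $\mathcal{O}_A$ are Zariski dense in $H_s$ for each $s\in S$, they are Zariski dense in $V_{m,n}$.

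\emph{Where the work is.} The archimedean and unramified-prime analyses, Dirichlet's theorem, the congruence subgroup property, and Vinogradov's asymptotic with congruence conditions on the variables are all classical; the real content is the genericity step — choosing $r_1,\dots,r_{n-1}$ so that the residual linear form $c$ is everywhere locally solvable in primes with the prescribed congruences \emph{and} so that the set $S$ of such choices remains Zariski dense. For the ramified primes $p\mid Q$ local solvability is forced by the ``template'' $x_0$, and this is exactly where the no-obstruction hypothesis enters: unlike Theorem~5.1, where the only obstruction is at $p=2$ and $x_0$ can be taken to be an explicit all-odd matrix, here $x_0$ must be produced abstractly from the local orbit data of $\mathcal{O}_A$, and the delicate point is that it can be chosen so that enough of its $(n-1)\times(n-1)$ minors are prime to $Q$ for the local densities at the primes $p\mid Q$ to be strictly positive — that is, that ``no local congruence obstruction'' is \emph{precisely} the condition making Vinogradov's main term nonzero.
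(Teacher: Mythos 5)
Your overall architecture matches the paper's: expand $\det x=m$ along one row to reduce to a single linear form $\sum_j c_j x_{nj}=m$ in $n\ge3$ prime variables with the coefficients $c_j$ the maximal minors of the other $n-1$ rows, apply Vinogradov's theorem (the paper cites [Vi], [Va]), and then run a fibration/dimension argument to promote local Zariski density over a dense set of ``base'' rows to density in $V_{m,n}$. The paper only proves Theorem 5.1 ($\Lambda=SL_n(\mathbb{Z})$, $m=2^{n-1}$) in full, and for Theorem 5.2 says merely that the proof ``can be extended,'' pointing to the congruence subgroup property in Remark 5.3(A); so your use of CSP to reduce membership in $\mathcal{O}_A$ to the conditions $\det x=m$ and $x\equiv x_0\pmod Q$ is exactly the right reduction and is not spelled out in the paper.

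There is, however, a genuine gap where you describe the genericity step. You ask to choose $r_1,\dots,r_{n-1}$ so that $c=(c_1,\dots,c_n)$ is \emph{primitive}. This cannot happen. Your construction forces every entry of $r_i$ to lie in the residue class of $(x_0)_{ij}$ mod $Q$; since $2\mid Q$ and $(x_0)_{ij}$ is prime to $Q$, the entries of $r$ are all odd. But the determinant of any $(n-1)\times(n-1)$ matrix with odd entries is divisible by $2^{n-2}$ (subtract the first row from the others and pull a factor of $2$ out of each of the $n-2$ remaining rows), so $2^{n-2}\mid\gcd(c_1,\dots,c_n)$ for \emph{every} admissible $r$ once $n\ge3$. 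The correct target is not $\gcd(c)=1$ but a precise control of $\gcd(c)$: one needs $\gcd(c)\mid m$, one needs every prime factor of $\gcd(c)$ to divide $Q$, and one needs the compatibility conditions ensuring Vinogradov's singular series is nonvanishing — in the paper's notation, conditions (5.20) and (5.21), which include equalities of the form $(A_0,A_1,\dots,\widehat{A_j},\dots,A_n)=(A_0,\dots,A_n)$ together with a parity congruence. These are not ``generic'' conditions: they constrain $\gcd(c)$ at every prime simultaneously, which is not a condition mod a fixed modulus, so Dirichlet alone does not give you a dense (or positive-density) set of admissible $r$. This is precisely why the paper introduces Lemma 5.4, an explicit inductive construction of a Zariski-dense set $\mathcal{G}$ of blocks whose minors satisfy $(A_1,\dots,A_n)=2^{n-2}$, $A_1+\cdots+A_n\equiv 0\pmod{2^{n-1}}$ and $\prod A_j\ne0$; the induction plus Dirichlet (with several auxiliary prime moduli $q_3,\dots,q_n$) is what pins down the gcd at all primes at once. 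Your ``Where the work is'' paragraph gestures at this but mislocates it: the delicate point is not that enough minors are ``prime to $Q$'' but that $\gcd(c)$ is controlled at \emph{all} primes including those outside $Q$, and that the singular-series compatibility conditions analogous to (5.20)--(5.21) hold.

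A small additional imprecision: in the easy direction, ``only finitely many [prime matrices] can have an entry equal to a prime divisor of $q$'' is false — there may be infinitely many with $x_{11}=p$, say. What you want is that such matrices lie in a proper closed subset $\bigcup_{i,j,\,p\mid q}\{x_{ij}=p\}\cap V_{m,n}$, which is what Zariski density of the prime matrices lets you step outside of; the conclusion is fine but the justification should be phrased this way.
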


\noindent
{\bf Remark 5.3.}
\begin{enumerate}
\item[(A)] We are using here that for $n \geq 3$ every finite index subgroup of
$SL_n ( \mathbb{Z})$ is a congruence subgroup ([Me], [B-M-S]).

\item[(B)] The general orbit conjecture for this action asserts that
Theorem 5.3 holds for $\Lambda$ a subgroup of $SL_n ( \mathbb{Z})$ which
is Zariski dense in $SL_n$ and with the coordinate functions $x_{ij}$,
$i = 1 , \ldots , n , \, j= 1 , \ldots , n$ replaced by any set $f_1 ,
\ldots , f_t$ of primes in the coordinate ring $\mathbb{Q} [ x_{ij}] \
\slash ( \det x_{ij} - m )$.  In this setting the local congruence
obstructions that need to be passed are that for any $q \geq 2$ there is
an $x$ in $\mathcal{O}_A (\mbox{mod } q)$, the reduction of $\mathcal{O}_A$
modulo $q$, such that $f_1 ( x ) f_2 ( x ) \ldots f_t ( x ) \in (
\mathbb{Z} \slash q \mathbb{Z} )^\ast$.
\end{enumerate}

An example of an orbit in Theorem 5.2 for which there are no local
obstructions for any $\Gamma$ is
\[
\mathcal{O} \, = \, \Gamma
\left[
\begin{array}{rrlcrl}
1 & 1 &&\cdots & 1 & 1 \\
- 1 & 1 && \cdots & 1 & 1 \\
- 1 & - 1 & 1 & \cdots & 1 & 1 \\
- 1 & - 1 &&\cdots& - 1 & 1
\end{array}
\right]
\subset V_{2^{n-1},n} \, .
\]
This is similar to $a = 1$ (or -1) in Dirichlet's theorem, i.e., there are 
infinitely many $p \equiv 1 ( \mod q )$ for any $q$.

\noindent
{\it Proof of necessity of the congruence condition in Theorem 5.1} : If the set of matrices with prime entries is Zariski dense, then of course the set of matrices with odd entries is Zariski dense. But then if $x$ is $n \times n$ integral and has odd entries then writing the
columns of $x$ as $a_1 , \ldots , a_n$, we have: $\det x = \det [a_1 ,
\ldots , a_n] = \det [a_1 , a_2 - a_1 , a_3 - a_1 , \ldots , a_n - a_1]
= \det [ a_1 , 2b_2 , \ldots , 2b_n]$, with $b_j$ integral.  Hence $\det
x =2^{n - 1} \det [a_1 , b_2 , \ldots , b_n] \equiv 0 ( 2^{n-1})$.\qed

To demonstrate the sufficiency of the congruence condition in Theorem 5.1, we will consider the simplest case when $m = 2^{n-1}$. In general
one needs to impose further congruence conditions in the construction
below.
\begin{Lem}
For $n \geq 2$ let
\[
\mathcal{Y} \, = \,
\left\{
\left[
\begin{array}{lcl}
x_{21}x_{22} & \cdots & x_{2n} \\
x_{31} & \cdots & x_{3n} \\
\vdots && \\
x_{n1} & \cdots & x_{nn}
\end{array}
\right]
\right\}
\]
which we identify with affine $A^{(n - 1).n}$ space. 
We denote by  $A_j ( y )$  the $(n - 1 )
\times ( n - 1)$ minor of $y$ gotten by striking the $j$-th column.
 Let
$\mathcal{G}$ be the set of $y \in \mathcal{Y}$ for which
\[
(A_1 ( y ) , A_2 ( y ) , \ldots , A_n ( y )) \, = \, 2^{n-2} \tag{i}
\] 
\[ A_1 ( y ) A_2 ( y ) \ldots A_n ( y ) \ne 0 \tag{ii}
\]
\[
A_1 ( y ) \, + \, A_2 ( y ) \, + \cdots \, + A_n ( y ) \, \equiv \, 0
(2^{n - 1} ) \tag{iii}
\]
and the $x_{ij} $ (where $( x_{ij} ) = y)$ are all prime.  Then $\mathcal{G}$ is
Zariski dense in $\mathcal{Y}$. \end{Lem}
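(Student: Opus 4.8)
\emph{Strategy.} I would deduce the density of $\mathcal G$ in $\mathcal Y=\mathbb{A}^{(n-1)n}$ from a statement about the fibres of the ``minor map'' $\Phi:\mathcal Y\to\mathbb{A}^{n}$, $\Phi(y)=(A_{1}(y),\dots,A_{n}(y))$. First a normalisation at the prime $2$: if $y$ has all entries odd then $2^{n-2}\mid A_{j}(y)$ for every $j$ (subtract one surviving column from each of the other $n-2$ in the defining determinant of $A_{j}(y)$ --- the same computation used for the necessity half of Theorem 5.1), so $\gcd_{j}A_{j}(y)=2^{n-2}\gcd_{j}a_{j}(y)$ with $a_{j}:=A_{j}/2^{n-2}$. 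Consequently conditions (i)--(iii) depend on $y$ only through $\alpha=\Phi(y)$: call $\alpha\in\mathbb{Z}^{n}$ \emph{admissible} if $\gcd_{j}\alpha_{j}=2^{n-2}$, $\prod_{j}\alpha_{j}\ne 0$ and $\sum_{j}\alpha_{j}\equiv 0\ (2^{n-1})$, so that $\mathcal G=\bigsqcup_{\alpha\ \mathrm{admissible}}\{\,y\in V_{\alpha}:\ \text{all entries prime}\,\}$, where $V_{\alpha}:=\Phi^{-1}(\alpha)$. Using a $\pm1$ matrix of determinant $2^{n-1}$ as in the example above, one checks that admissible $\alpha$ exist and form a Zariski-dense subset of $\mathbb{A}^{n}$. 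For admissible $\alpha$ (in particular $\alpha\ne 0$) the fibre $V_{\alpha}$ consists only of rank-$(n-1)$ matrices, and two such have the same minor vector iff they differ by left multiplication by an element of $SL_{n-1}$; hence, fixing any integer $W_{\alpha}$ of rank $n-1$ with $\Phi(W_{\alpha})=\alpha$, we get $V_{\alpha}=SL_{n-1}\cdot W_{\alpha}$, a Zariski-closed principal homogeneous space for $SL_{n-1}$ (trivial stabiliser) defined over $\mathbb{Q}$, in which $SL_{n-1}(\mathbb{Z})\cdot W_{\alpha}$ is Zariski dense by Borel's theorem.

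\emph{Plan.} (1) For a Zariski-dense set of admissible $\alpha$, show that the set of $g\in SL_{n-1}(\mathbb{Z})$ for which all $(n-1)n$ entries of $gW_{\alpha}$ are prime is Zariski dense in $V_{\alpha}$. (2) Conclude by a fibration argument: if $\mathcal G$ were contained in $\{h=0\}$ for some $h\in\mathbb{Q}[\mathcal Y]$ not vanishing on $\mathcal Y$, then by (1) $V_{\alpha}\subseteq\{h=0\}$ for every admissible $\alpha$; since $g\mapsto h(gW_{\alpha})$ is a regular function on $SL_{n-1}$ depending algebraically on $\alpha$, the set of $\alpha$ with $h|_{V_{\alpha}}\equiv 0$ is Zariski closed, hence (being Zariski dense) all of $\mathbb{A}^{n}$, so $h$ vanishes on $\bigcup_{\alpha}V_{\alpha}$ and thus identically --- a contradiction. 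For $n=2$ the group $SL_{1}$ is trivial, $A_{1}(y)=x_{22}$, $A_{2}(y)=x_{21}$, and $\mathcal G=\{(p,q):p\ne q\ \text{odd primes}\}$, which is visibly Zariski dense in $\mathbb{A}^{2}$, so one may assume $n\ge 3$.

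\emph{The engine.} Step (1) is exactly a ``prime entries on a homogeneous space'' problem of the type treated by Theorem 1.4: the entries of $gW_{\alpha}$ are linear forms in the entries of $g$, and one wants them all prime simultaneously. It is handled by Vinogradov's method for one linear equation in three or more prime variables, run in the orbit $SL_{n-1}(\mathbb{Z})\cdot W_{\alpha}$ exactly as in the proof of Theorem 1.4 for $V_{m,n}$ (equivalently, after singling out $n-1$ independent columns one is looking at $V_{\delta,n-1}$ for $\delta=\det$ of those columns, together with $n-1$ further linear forms to be made prime). The one thing that must be arranged by hand is the absence of local congruence obstructions to all entries of $gW_{\alpha}$ being simultaneously prime; this can be secured by choosing $\alpha$ --- equivalently $W_{\alpha}$ --- inside suitable residue classes modulo a fixed modulus, and such $\alpha$ still form a Zariski-dense set, so that the relevant product of local densities is positive.

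\emph{Main obstacle.} The substance is Step (1): the Vinogradov/circle-method input producing Zariski-many $g\in SL_{n-1}(\mathbb{Z})$ with $gW_{\alpha}$ of all-prime entries, together with the prime-by-prime verification that admissible $\alpha$ can be chosen with the required local behaviour (in particular at $2$). The normalisations at the prime $2$, the identification of the fibres with $SL_{n-1}$-orbits, and the fibration argument of Step (2) are all elementary; the analytic work is the same engine that drives Theorem 1.4, and Lemma 5.4 is one rung of the induction on $n$ that establishes it.
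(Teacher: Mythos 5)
Your proposal is geometrically attractive---identifying the fibres of the minor map $\Phi=(A_1,\dots,A_n):\mathcal Y\to\mathbb{A}^n$ with principal homogeneous spaces for $SL_{n-1}$ is a nice observation, and the fibration argument in Step (2) is sound in spirit. But the engine you propose for Step (1) is the wrong one, and the plan does not go through.

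The paper's proof of Lemma 5.4 uses \emph{only} Dirichlet's theorem, applied column by column in an induction on $n$: one builds $y=[z\,\xi\,w]$ from a solution $w$ of the $(n-1)$-case, chooses $\xi$ by a congruence calculation modulo $2^{n-1}$, and then chooses $z$ subject to finitely many prime-modulus congruences, each time using Dirichlet to make the entries prime. There is no circle method inside Lemma 5.4; Vinogradov's method is applied \emph{afterwards}, to the single linear equation $A_1 s_1 - A_2 s_2 + \cdots = A_0$ that the top row must satisfy. The whole architecture of Section 5 is: an elementary lemma (5.4) to prepare the bottom $(n-1)$ rows, and Vinogradov for the remaining one linear equation in $n\ge 3$ prime unknowns. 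Your Step (1) instead asks for all $(n-1)n$ entries of $gW_\alpha$, $g\in SL_{n-1}(\mathbb{Z})$, to be prime simultaneously. That is \emph{not} an instance of Theorem 1.4: Theorem 1.4 makes the $n^2$ entries of a square matrix of fixed determinant prime, whereas here you additionally need an extra column ($n-1$ further linear forms in $g$) prime, which is strictly stronger. So even granting Theorem 1.4 you do not get Step (1), and in any case invoking Theorem 1.4 here is circular --- you yourself note that Lemma 5.4 is ``one rung of the induction'' that proves Theorem 1.4.

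The small cases make the gap concrete. For $n=3$ your Step (1) is a statement about $SL_2(\mathbb{Z})$ acting on $2\times 3$ matrices; after discarding a column you are left needing, at the very least, all four entries of a $2\times 2$ matrix of fixed determinant to be prime in a Zariski-dense way. The paper explicitly flags (Remark 1.5(2)) that this binary case of Theorem 1.4 is open --- Vinogradov's method requires at least three prime variables on a single linear equation and does not apply. Since your induction must pass through $n=3$, the proposal founders there. The paper's proof sidesteps all of this: its induction reduces $(n-1)\times n$ directly to $(n-2)\times(n-1)$, bottoming out at the trivial $1\times 2$ case, and never needs more than Dirichlet's theorem. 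The takeaway is that Lemma 5.4 is deliberately an \emph{elementary} preparatory step; moving the circle method inside it both introduces circularity and destroys the base of the induction.
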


\begin{proof}
We use Dirichlet's theorem repeatedly and proceed by induction on $n$.

For $n = 2$, $y = [ x_{21}, x_{22}]$ and we seek $(x_{21}, x_{22} ) = 1
, x_{21} + x_{22} \equiv 0 ( 2 )$ and $x_{21} , x_{22}$ both prime.
Clearly the set $\mathcal{G}$ of such is Zariski dense in
$A^2$.

For $n \geq 3$ we assume the Lemma for $n - 1$ and construct the $y \in
\mathcal{G}$ as follows:

For $z\, = \, 
\left[
\begin{array}{l}
z_2 \\
\vdots \\
z_n 
\end{array}
\right] \, ,  
\, \xi \, = \,
\left[
\begin{array}{c}
\xi_2 \\
\vdots \\
\xi_n
\end{array}
\right]
$, and $w$ an $(n - 1) \times ( n - 2)$ matrix we write
\[
y \, = \, [ z \, \xi \, w ] \, .
\]

By induction, the set of $w$'s in the space $\mathcal{W}$ of such $(n -
1) \times (n - 2)$ matrices for which $w_{ij}$ are all prime and such
that 
\setcounter{equation}{0}
\begin{equation}
C_2 ( w ) \, + \, C_3 ( w ) \, \cdots \, + \, C_n ( w ) \, \equiv \,
0 ( 2^{n - 2} )
\end{equation}
and
\begin{equation}
(C_2 ( w ) , \ldots , C_n ( w )) \, = \, 2^{n - 3}
\end{equation}
is Zariski dense in $\mathcal{W}$.  Here $C_i ( w )$ is the $(n - 2)$
minor of $w$ obtained by striking the $i$-th row of $w$.  For such a $w$
we seek $\xi$ satisfying 
\begin{equation}
A_1 = \xi_2 C_2 - \xi_3 C_3 \, \cdots \, + \, ( - 1)^n \, \xi_n C_n \,
\equiv \, 2^{n-2} ( 2^{n-1} )
\end{equation}
and 
\begin{equation}
( \xi_j , 2 ) \, = \, 1 \, .
\end{equation}
In view of (5.1) and (5.2), this amounts to 
\begin{equation}
\xi_2 C^\prime_2 - \xi_3 C^\prime_3 \cdots + ( - 1)^n \xi_n C^\prime_n \equiv 2 ( \mbox{mod}
4)
\end{equation}
where
\begin{equation}
(C^\prime_2 , \ldots , C^\prime_n ) \, = \, 1 \hspace{.9em} \mbox{and}
\hspace{.9em} C^\prime_2 \, + \, \cdots C^\prime_j + \cdots C^\prime_n \, \equiv \, 0 (
2 ) \, .
\end{equation}

\medskip
\noindent
According to (5.6) the number  $\ell$ of $C^\prime_j$'s which are $\equiv \pm
1(4)$ is even and positive.  Collecting these $C^\prime_j$'s on the left,
renumbering the indices and replacing $\xi_j$ by $- \xi_j$ suitably,
leads to solving 
\begin{equation}
\xi_2 + \xi_3 + \cdots \xi_{\ell + 1} \, 
\equiv \, b ( \!\! \!\!\!\! \mod 4)
\end{equation}

\noindent
where $b$ is either $0$ or $2 \!\! \mod 4$.  If $b \equiv 0 ( 4 )$ choose
$\xi_j = ( - 1 )^j$, $2 \leq j \leq \ell + 1$ while if $b \equiv 2 ( 4 )$
choose $\xi_2 = \xi_3 = 1$ and $\xi_j = (-1)^j$ for $4 \leq j \leq
\ell + 1$.  Since $\ell$ is even these choices solve (5.5).

Having found such a $\xi (\!\!\!\! \mod 2^{n - 1})$ satisfying (5.3) and (5.4) we choose
$\xi$ integral satisfying this congruence and for which $\xi_j$ are all
prime.  This is possible by Dirichlet's theorem and their choice is
Zariski dense in $\xi$ space.

For each choice of $w$ and $\xi$ above we choose $z$ as follows, First,  we
have
\begin{equation}
A_1 \, \equiv \, 2^{n-2} (\!\!\!\!\!\! \mod 2^{n-1} )
\end{equation}
and hence $A_1 \ne 0$.  For each odd prime $p$ dividing $A_1$, let
$t^{(p)} = \, \left[
\begin{array}{c}
t^{(p)}_2 \\
\vdots \\
t^{(p)}_n
\end{array}
\right] $ 
be chosen with
$t^{(p)}_j \in ( \mathbb{Z} \slash p \mathbb{Z})^\ast$ and satisfying 
\begin{equation}
A_2 := t^{(p)}_2 C_2 - t^{(p)}_3 C_3 \cdots + ( - 1 )^n \, t^{(p)}_n 
C_n \not\equiv \, 0 ( p ) \, .
\end{equation}
It is clear that such a $t^{(p)}$ can be found since $( C_2 , \ldots , C_n
) = 2^{n - 3}$ and $p \geq 3$.

Next let $q_3 , \ldots, q_n$ be distinct primes different from 2 and any
prime divisor of $A_1$ and of 
any entry of $w$.  We chose $z$ to satisfy the following congruences:
\begin{equation}
\left[
\begin{array}{c}
z_2 \\
\vdots \\
z_n
\end{array}
\right] \, \equiv \,
\left[
\begin{array}{c}
w_{2j} \\
\vdots \\
w_{nj}
\end{array}
\right]
(\!\!\!\!\!\!\mod q_j) \, , \hspace{.9em} \mbox{for} \hspace{.9em} 3 \, \leq
\, j \, \leq n \, .
\end{equation}
\begin{equation}
\hspace{-.15in}
\left[
\begin{array}{c}
z_2 \\
\vdots \\
z_n
\end{array}
\right]
\, \equiv \, 
\left[
\begin{array}{c}
t^{(p)}_2 \\
\vdots \\
t^{(p)}_n
\end{array}
\right]
(\!\!\!\!\!\!\mod p) \hspace{.9em} \mbox{for} \hspace{.9em} p | A_1 \, ,
\, p \ \mbox{odd}
\end{equation}
\begin{equation}
\hspace{-1.35in}
\left[
\begin{array}{c}
z_2 \\
\vdots \\
z_n 
\end{array}
\right] \, \equiv \, 
\left[
\begin{array}{c}
\xi_2 \\
\vdots \\
\xi_n
\end{array}
\right]
(\!\!\!\!\!\!\mod 2^{n-1})
\end{equation}

The conditions (5.10), (5.11), (5.12) involve distinct prime moduli and the
numbers on the right are all prime to their moduli, hence by Dirichlet's theorem 
we can choose $z_j$ to be prime and to satisfy the congruences 
(5.10), (5.11), (5.12).  Moreover, 
the set of choices for these $z$'s is Zariski dense in
the space of $z$'s.  This produces matrices $y = [z \xi w]$ which we
check satisfy the requirement of the lemma.  Indeed by (5.8) 
\begin{equation}
A_1 ( y ) \, \equiv \, 2^{n - 2} (\!\!\!\!\!\!\mod 2^{n - 1}) \, .
\end{equation}
By (5.9) $A_1 ( y ) $ and $ A_2 ( y )$ have no odd prime common factor.  Also by (5.12)
\begin{equation}
A_2 ( y ) \, \equiv \, A_1 ( y ) (\!\!\!\!\!\!\mod 2^{n-2})
\end{equation}
so we conclude that 
\begin{equation}
(A_1 ( y ) , \, A_2 ( y )) \, = \, 2^{n-2} \, .
\end{equation}
Note that for $3 \leq j \leq n$,   $ A_j ( y ) \equiv 0 ( 2^{n - 2})$ since $A_j$ is the
determinant of an $( n - 1) \times ( n - 1)$ matrix with odd entries.
Hence
\begin{equation}
A_1 ( y ) + A_2 ( y ) + \cdots + A_n ( y ) \, \equiv \, 0 ( \!\!\!\!\mod
2^{n-1} ) \, .
\end{equation}

Thus together with (5.13) we deduce that 
\begin{equation}
(A_1 ( y ) , \ldots , A_n ( y )) \, = \, 2^{n-2}
\end{equation}
From (5.13) and (5.14) we conclude that $A_1 ( y ) A_2 ( y ) \ne 0$, while from (5.10) we have that $A_j ( y ) \equiv \pm A_1 ( y )
(\!\!\!\!\mod q_j)$ for $3 \leq j \leq n$ and hence $A_3 ( y )
\ldots A_n ( y ) \ne 0$.  All this, coupled with the fact that the
entries of $y$ are prime and that the $y$'s can be chosen to be Zariski
dense in $\mathcal{Y}$, completes the proof of Lemma 5.4. 
\end{proof}

We now appeal to Vinogradov's methods [Vi] for studying the solvability
of 
\begin{equation}
H_{A_0 , \ldots , A_n}: \, A_1 s_1 - A_2 s_2 \cdots + ( - 1)^{n+1} \,
A_n s_n - A_0 = 0
\end{equation}
with $s_j$ prime (i.e., $(s_j)$ a prime ideal in $\mathbb{Z}$).  The
treatment in Vaughan [V], pp. 37 shows that if $n \geq 3$ and $A_1 A_2
\ldots A_n \ne 0$, then the number of solutions to (5.18) with $| s_j|
\leq T$ and $s_j$ prime satisfies
\begin{equation}
R ( T ) \gg \, C \, 
\frac{T^{n-1}}{(\log T)^{n}} \, + \, O_\nu \, \left(
\frac{T^{n-1}}{( \log T )^\nu} \right)
\end{equation}
for any fixed (large) $\nu$.  Moreover, the critical number $C$ given by the
singular series is nonzero {\sf iff} the following local conditions are
satisfied
\begin{equation}
\begin{array}{lll}
(A_0 , A_1, \ldots , A_{n-1} ) & = & (A_0, A_1 , \ldots , A_{n-2} , A_n
) \,
\\
& = & (A_1 , A_2 , \ldots , A_n ) \, = \, (A_0 , A_1 , \ldots , A_n)
\end{array}
\end{equation}
and
\begin{equation}
A_0 + A_1 + \cdots + A_n \, \equiv \, 0 (\!\!\!\!\!\!\mod 2 ( A_0 , A_1 ,
\ldots , A_n )) \, .
\end{equation}

If any of these conditions fail, for example if there is a prime $p$ and
an $e \geq 1$ with $p^e | A_j$ for $j = 0, \ldots , n - 1$ but $p^e
\nmid A_n$, then any solution to (5.18) must have $p | s_n$.  Hence the
set of solutions to (5.18) with $s_n$ prime is not Zariski dense in
$H_{A_0 , \ldots , A_n}$.  Thus the conditions (5.20), (5.21) are
necessary for the Zariski density of $(s_1 , \ldots , s_n ) , s_j$ prime
in $H_{A_0 , \ldots , A_n}$.  These conditions are also sufficient.
Indeed $H_{A_0 , \ldots , A_n}$ is connected and hence if these points
are not Zariski dense then there is a polynomial $f ( s_1 , \ldots ,
s_n)$ which is nonconstant on $H_{A_0 , \ldots , A_n}$ such that all the
$s$'s lie in $H_{A_0 , \ldots , A_n} \cap \{s : f ( s ) = 0 \}$.
It is elementary that the number of integer points in this intersection
and for which $| s_j| \leq T$ is $O ( T^{n - 2})$.  Hence if $C \ne 0$
then (5.19) gives a contradiction to the points all lying in $\{ s: f (
s ) = 0 \} \cap H_A$.  We conclude that 
\begin{equation}
\{ ( s_1 , \ldots , s_n ): s_j \ 
\mbox{is prime}, \ \ s \in H_{A_0 ,
\ldots , A_n} \}
\end{equation} 
is Zariski dense in $H_{A_0 , \ldots , A_n}$ {\sf iff} (5.20) and (5.21)
hold.

Let $\mathcal{Y}$ be the space in Lemma 5.4 and $\mathcal{G}$ the set of
$y$'s constructed in that Lemma.  Set $A_0 = 2^{n-1}$.  Then for $y \in
\mathcal{G}$
\[ (A_1 ( y ) , \ldots , A_n ( y )) \, = \, 2^{n-2}
\]
and
\[
A_1 ( y ) + \cdots + A_n ( y ) \, \equiv \, 0 ( 2^{n-1})
\]
and $A_1 ( y ) \ldots A_n ( y ) \ne 0$.  Hence $(A_0 , A_1 ( y ) ,
\ldots , A_n ( y )) = 2^{n-2}$ and the number of $1 \leq j \leq n$ for
which $2^{n - 2} | A_j ( y )$ is even and positive.  It follows
that
\[ (A_0 , A_1 ( y ) , \ldots , A_{n-1} ( y )) \, = \, (A_0 , A_1 ( y ) ,
\ldots , A_{n - 2} ( y ) , A_n ( y )) \, = 
\]
\[
= \, (A_1 ( y ) , \ldots , A_n(y )) \, = \, 2^{n-2} \hspace{.25in} \mbox{and}
\]
\[
A_0 \, + \, A_1 ( y ) \cdots \, + \, A_n ( y ) \, \equiv \, 0 \!\!\! \! \mod (2
(A_0 , \ldots , A_n ( y )))
\, . \]
Thus (5.20) and (5.21) are satisfied and hence by (5.22) it follows that
for any $y \in \mathcal{G}$ the set of $s \in H_{A_0 , \ldots , A_n ( y
)}$ for which all $s_j$ are prime is Zariski dense in $H_{A_0 , \ldots , A_n ( y
)}$.  For each such $y$ and $s$ the matrix $x$

\[
\left[
\begin{array}{lcl}
s_1 & \cdots & s_n \\
& y & 
\end{array}
\right] \, = \,
\left[
\begin{array}{llll}
x_1 & x_2 & \cdots & x_n \\
z_2 & \xi_2 && \\
\vdots & \vdots & w & \\
z_n & \xi_n &&
\end{array}
\right] \, , \hspace{.9em} \mbox{is a prime matrix in} \ V_{ 2^{n-1},n}
\, . 
\]
To complete the proof of Theorem 5.1 with $m = 2^{n - 1}$, we must show
that set of $x$'s constructed above is dense in $V_{ 2^{n - 1},n}$.
Let $\mathcal{Y}^0 = \{ y \in \mathcal{Y}: A_j ( y ) \ne 0$ for some $1
\leq j \leq n \}$.  $\mathcal{Y}^0$ is an open irreducible subset of
$A^{(n - 1 ) \times n}$ and is quasi-affine.  Let $\Upsilon :
V_{m,n} \rightarrow \mathcal{Y}^0$ be the surjective morphism 
\begin{equation}
x \, \longrightarrow \,
\left(
\begin{array}{l c l}
x_{21} & \cdots & x_{2n} \\
\vdots && \\
x_{n 1} & \cdots & x_{nm}
\end{array}
\right)
\end{equation}
If $U$ is a nonempty open subset of $V_{m,n}$ then since $V_{m,n}$ is
connected, $U$ is dense and hence $\Upsilon ( U )$ is dense in $\Upsilon ( V_{m,n} ) = \mathcal{Y}^0$.  Also $\Upsilon ( U )$ is constructible and contains
an open dense subset of $\mathcal{Y}^0$.  According to Lemma 5.4 there
is a $y \in \mathcal{G} \cap \Upsilon ( U )$.  
Now $U \cap \Upsilon^{-1} ( y )$ is
a nonempty open subset of $H_{2^{n-1} , A_1 ( y ) , \ldots , A_n ( y)}$.
According to the analysis above it contains a point $(p_1, \ldots ,
p_n)$ all of whose coordinates are prime.  Hence $x = \left[
\begin{array}{c} p \\ y \end{array} \right]$ is a prime matrix and it is in
$U$.  This proves Theorem 5.1.

\medskip
\noindent
\underline{\large \sf Section 6. \ Spectral estimates for uniform lattices}

We now turn to explain an alternative approach to estimating the level
of distribution.  Indeed, rather than giving an error term for the
number of lattice points in a ball, it suffices to estimate the
deviation of a positive weighted sum over the lattice points.  This allows one to
take a smooth weight and to estimate its Fourier transform directly via a
convergent eigenfunction expansion of the corresponding automorphic
kernel.  This method gives a sharper result for the level of
distribution and the improvement is most significant when the lattice is
co-compact.  Since the latter assumption also allows us to avoid the
analysis of Eisenstein series necessary for the estimates of
eigenfunction expansions, we will present the method only for co-compact
lattices.  In this case it can be naturally viewed as providing the
error estimate in the non-Euclidean version of a Poisson summation
formula for compact locally symmetric spaces.  To simplify matters
further, we will assume that the weight functions are radial, allowing
us to reduce matters to spectral estimates associated with spherical
functions.  We note that the estimate of the deviation we give below is
in fact sharp: it gives the best possible result for a smooth weighted
sum. 

We retain the notation of \S\S3.1-3.3, and again let $G$ be a connected
semisimple Lie group with finite center and no compact factors.  $\mathcal{S}$
denotes the symmetric space $\mathcal{S} = G/K$ where $K$ is a maximal compact
subgroup.  We take the Riemannian structure on $\mathcal{S}$ induced by the
Cartan-Killing form on $G$ and let $d$ denote the associated
$G$-invariant distance.   Consider the family of kernels on $\mathcal{S} \times \mathcal{S}$
given by:
\[
L_t ( z , w ) \, = \, \chi_{[0,t]} \, (d ( z , w )) 
\]
where $\chi_{[0,t]}$ is the characteristic function of an interval.  Fix
a smooth function $b ( w )$ on $\mathcal{S}$ which is non-negative, positive
definite, supported in a ball of radius $t_0$ with center $w_0$,
invariant under $K_{w_0}$ and with integral 1.  Define the following
smooth function, which is supported on the set of points whose distance
from $w_0$ is at most $t+t_0$:

\[
W_t ( z ) \, = \, \int\limits_{\mathcal{S}} \, L_t ( z , w ) \, b ( w ) \, d \,
{\rm vol} ( w ) \,  \tag{6.0}
\]
Let $\Gamma$ be any  uniform lattice  in $G$  which satisfies that
for any $z$\break $| \{ \gamma \in \Gamma: \, d ( \gamma z , z) \, \leq \, 1
\} | \, \leq \, c$ with $c$ fixed. Our version of the error
estimate in the Poisson summation formula is as follows.
\setcounter{section}{6}
\setcounter{Thm}{0}
\begin{Thm}
{\bf Poisson summation formula.} \ Let $\Gamma$ be a uniform irreducible lattice in
$G$ as above. Then for $\eta > 0$ fixed, 
\[
\sum\limits_{\gamma \in \Gamma} \: W_t ( \gamma z ) \, = \,
\frac{\rm{vol} \, B_t}{{\rm vol} ( \Gamma \backslash G )} \, + \, O_\eta
\, ( ( {\rm vol} \, B_t )^{1-\frac1p + \eta} ) \, , 
\]

\noindent
where 
\begin{enumerate}
\item The result holds uniformly for arbitrary $z$ and $w_0$.
\item  $p = p ( G , \Gamma )$ is the integrability parameter of the representation in $L^2_0(\Gamma\backslash G)$ as in
Theorem 3.3.  In particular $1-\frac1p = \frac{1}{2}$ if and only if all
representations weakly contained in $L^2_0 ( \Gamma \backslash G)$ are tempered.
\end{enumerate}
\end{Thm}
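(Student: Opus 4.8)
The plan is to recognise $\sum_{\gamma\in\Gamma}W_t(\gamma z)$ as the value of an automorphic kernel attached to a point–pair invariant on $\mathcal{S}$, and then to read off the main term and estimate the error from the spectral expansion. Since $b$ is $K_{w_0}$-invariant and $\mathcal{S}$ is a symmetric space, $W_t$ is radial about $w_0$: $W_t(z)=F_t(d(z,w_0))$ for a continuous, compactly supported profile $F_t$, so that $W_t^{\sharp}(z,w):=F_t(d(z,w))$ is a point–pair invariant with $W_t(z)=W_t^{\sharp}(z,w_0)$. Moreover $W_t^{\sharp}$ is exactly the $\mathcal{S}$-convolution of the sharp ball kernel $\varphi_t^{\sharp}(z,w)=\chi_{[0,t]}(d(z,w))$ with the radial mollifier attached to $b$, so its spherical transform factors, $\widehat{W_t}=\widehat{\varphi_t}\cdot\widehat b$. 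Because $\Gamma$ is cocompact the automorphic kernel $K(z,w)=\sum_{\gamma\in\Gamma}W_t^{\sharp}(\gamma z,w)$ on $(\Gamma\backslash\mathcal{S})\times(\Gamma\backslash\mathcal{S})$ is continuous, the associated operator on $L^2(\Gamma\backslash\mathcal{S})$ is Hilbert–Schmidt, and the pre-trace formula gives $K(z,w)=\sum_{j\ge 0}\widehat{W_t}(\nu_j)\,\phi_j(z)\overline{\phi_j(w)}$, where $\{\phi_j\}$ is an orthonormal basis of $L^2(\Gamma\backslash\mathcal{S})$ of joint eigenfunctions of the invariant differential operators, $\phi_0$ constant and $\nu_j$ the spherical parameters. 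Evaluating at $w=w_0$ and isolating $j=0$ produces the main term $\widehat{\varphi_t}(\nu_0)\widehat b(\nu_0)|\phi_0|^2=\mathrm{vol}\,B_t/\mathrm{vol}(\Gamma\backslash G)$, since $\widehat{\varphi_t}(\nu_0)=\int_{\mathcal{S}}\varphi_t=\mathrm{vol}\,B_t$ and $\widehat b(\nu_0)=\int_{\mathcal{S}}b=1$.

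It then remains to bound $E(z):=\sum_{j\ge 1}\widehat{\varphi_t}(\nu_j)\,\widehat b(\nu_j)\,\phi_j(z)\overline{\phi_j(w_0)}$, and the crucial input is an estimate on $\widehat{\varphi_t}(\nu_j)$, i.e.\ on the integral of the Harish–Chandra spherical function over the ball $B_t$. For all but finitely many $\nu_j$ the parameter is tempered, $|\varphi_{\nu_j}|\le\Xi$, and $\int_{B_t}\Xi\ll_\eta(\mathrm{vol}\,B_t)^{1/2+\eta}$ by the standard radial volume and spherical-decay estimates. For the finitely many exceptional (complementary-series) $\nu_j$, the $K$-fixed vector generates a unitary representation lying in $L^{p+\eta}(G)$ with $p\le p(G,\Gamma)$, the integrability parameter furnished by the first part of the proof of Theorem 3.3; hence $\varphi_{\nu_j}\in L^{p+\eta}(G)$ and H\"older gives $|\widehat{\varphi_t}(\nu_j)|\le\|\varphi_{\nu_j}\|_{L^{p+\eta}(G)}\,(\mathrm{vol}\,B_t)^{1-\frac1{p+\eta}}\ll_\eta(\mathrm{vol}\,B_t)^{1-\frac1p+\eta}$. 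Thus uniformly over the whole automorphic spectrum, $|\widehat{\varphi_t}(\nu_j)|\ll_\eta(\mathrm{vol}\,B_t)^{1-\frac1p+\eta}$.

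To sum $E(z)$ I would use that $b$ is smooth with compactly supported profile of fixed shape, so by the Helgason–Paley–Wiener theorem its spherical transform decays faster than any polynomial on the tempered axis, $|\widehat b(\nu_j)|\ll_N(1+\|\nu_j\|)^{-N}$, with implied constant depending only on $b$ and $N$ and in particular independent of $w_0$ (it is $\le 1$ for the finitely many exceptional $\nu_j$); and that, $\Gamma\backslash\mathcal{S}$ being compact, Sobolev embedding gives $\|\phi_j\|_\infty\ll(1+\|\nu_j\|)^{A}$ for a fixed $A$, the hypothesis $|\{\gamma\in\Gamma:d(\gamma z,z)\le 1\}|\le c$ ensuring the local geometry and hence the Sobolev constants are uniform in $z$. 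Combined with the Weyl law $\#\{j:\|\nu_j\|\le R\}\ll R^{\dim\mathcal{S}}$, choosing $N$ large makes $\sum_{j\ge 1}(1+\|\nu_j\|)^{2A-N}$ converge, whence $|E(z)|\ll_\eta(\mathrm{vol}\,B_t)^{1-\frac1p+\eta}\sum_{j\ge 1}(1+\|\nu_j\|)^{2A-N}\ll_\eta(\mathrm{vol}\,B_t)^{1-\frac1p+\eta}$, uniformly in $z$ and $w_0$; this absolute convergence also retroactively justifies the pointwise pre-trace identity, the left side being a finite sum. Adding the main term gives the asserted formula, and the last clause ($1-\frac1p=\frac12$ iff all representations in $L^2_0(\Gamma\backslash G)$ are tempered) is immediate from $p=2$.

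The main obstacle is the uniform bound $|\widehat{\varphi_t}(\nu_j)|\ll_\eta(\mathrm{vol}\,B_t)^{1-\frac1p+\eta}$ with implied constant independent of $\nu_j$: this is precisely where one must invoke the bounds towards the generalized Ramanujan conjecture, through Theorem 3.3, to secure a single finite $p=p(G,\Gamma)$ valid across the spectrum, and separately dispatch the tempered bulk via the $\Xi$-function estimate for $\int_{B_t}\Xi$. Once the spectral side is set up, the remaining analysis — Paley–Wiener decay of $\widehat b$, uniform sup-norm bounds on eigenfunctions, Weyl-law convergence — is routine, but must be carried out with constants uniform in $z$ and $w_0$, which is exactly why the fixed-shape hypothesis on $b$ and the bounded-local-multiplicity hypothesis on $\Gamma$ are imposed.
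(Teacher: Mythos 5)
Your proposal is correct and follows the same overall spectral-expansion skeleton as the paper: expand $\sum_{\gamma}W_t(\gamma z)$ over an eigenbasis $\{\phi_j\}$ of $\mathcal{D}$, isolate $j=0$ as the main term, bound $\widehat{\varphi_t}(\nu_j)=\int_{B_t}\Psi_{\nu_j}$ by H\"older against $L^{p+\eta}(G)$ integrability of the spherical functions (which is where the Ramanujan-type input enters), and use rapid decay of $\widehat b$ plus polynomial growth of the eigenfunctions to sum the error. Where you diverge from the paper is in the ingredient controlling the eigenfunction sum. You use individual sup-norm bounds $\|\phi_j\|_\infty\ll(1+\|\nu_j\|)^A$ (from Sobolev embedding) together with the global Weyl law $\#\{j:\|\nu_j\|\le R\}\ll R^{\dim\mathcal{S}}$. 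The paper instead proves a local, averaged bound (Proposition 6.2): $\max_z\sum_{\|\lambda_j\|\le\|\lambda\|}|\phi_j(z)|^2\le C(\mathcal{S})(1+\|\lambda\|)^{\dim\mathcal{S}}$, with constant depending only on $\mathcal{S}$ (and the local multiplicity bound on $\Gamma$), obtained directly by comparing an automorphic mollifier kernel on the diagonal against the eigenfunction expansion. Both arguments give what the stated Theorem asks for (uniformity in $z$ and $w_0$, $\Gamma$ fixed); but the paper's Proposition 6.2 is sharper in an important respect. Your global Weyl-law constant carries a hidden factor of $\mathrm{vol}(\Gamma\backslash\mathcal{S})$, so your error term degrades as the covolume grows; the paper's local bound has no such factor. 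Since the paper immediately applies Theorem 6.1 uniformly over the tower of congruence subgroups $\Gamma(q)$ (to derive the level of distribution in (6.4)), with $\mathrm{vol}(\Gamma(q)\backslash\mathcal{S})=[\Gamma:\Gamma(q)]\to\infty$, the local bound is essential there. You should also note that your stated Weyl law $\#\{j:\|\nu_j\|\le R\}\ll R^{\dim\mathcal{S}}$ omits the covolume factor; as written it is only true with an implied constant proportional to $\mathrm{vol}(M)$. This is harmless for the Theorem as literally stated, but it is the point where your approach would not transfer to the uniform-in-$q$ statement the paper actually needs downstream, and it is precisely the weakness that Proposition 6.2 is designed to remove.
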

Let us note that it is indeed the case 
that $p(G,\Gamma)< \infty$ for every irreducible lattice and refer to [Ke-Sa] for discussion and references.

The proof is based on establishing uniform control on the pointwise
spectral expansion of the smooth function $W_t$.  We will use the spectral expansion associated 
with the commutative algebra $\mathcal{D}$ of $G$-invariant
differential operators on the symmetric space (recall that the Laplacian generates this algebra if and
only if the real rank of $G$ is one).  Being $G$-invariant, these
differential operators descend to operators on $M = \Gamma \backslash
\mathcal{S}$, and admit a joint spectral resolution. The eigenvalues are given 
by (infinitesimal) characters $\omega_\lambda : \mathcal{D}\to \mathbb{C}$, parametrized  
by $\lambda\in \Sigma\subset \mbox{Hom} (\mathfrak{a},\mathbb{C})/\mathcal{W}$. 
Here $G=NAK$ is an Iwasawa decomposition, $\mathfrak{a}$ is the Lie algebra of $A$, $\mathcal{W}$ is the Weyl group of $(\mathfrak{g},\mathfrak{a})$, and $\Sigma$ 
parametrizes the positive-definite spherical functions $\Psi_\lambda : G\to \mathbb{C}$.  
We let $\norm{\cdot}$ denote the Euclidean norm associated with the inner product on $\mathfrak{a}$ 
given by the restriction of the Killing form. Finally recall that every (normalized) joint eigenfunction $\phi$ on $\Gamma\backslash\mathcal{S}$ of the algebra $\mathcal{D}$ is also a joint eigenfunction of the commutative convolution algebra $L^1(K\backslash G/K)$  of bi-$K$-invariant kernels on $G$. The eigenvalue constitutes a complex homomorphism $\tilde{\omega}_\lambda$ of this algebra, which corresponds uniquely to a spherical function $\Psi_\lambda$. Thus if $F$ is bi-$K$-invariant  
$$\pi_{\Gamma\backslash G}(F)(\phi)=\tilde{\omega}_\lambda (F)\phi \mbox{ , where  } 
\tilde{\omega}_\lambda(F)=\int_G F(g)\Psi_\lambda(g)dm_G(g)\,\,.$$

We begin with the following result:
\begin{prop} {\bf The average size of eigenfunctions.}  Let $\Gamma$ be
any uniform lattice in $G$, satisfying the condition preceding Theorem 6.1,
and let $\phi_j , j \in \mathbb{N}$ be an orthonormal basis for $L^2 ( M
)$ consisting of joint eigenfunctions of the ring $\mathcal{D}$ of $G$-invariant
differential operators on $M$.  Denote by $\Psi_{\lambda_j}$ the spherical function associated with $\phi_j$.  Then there exists a constant $C ( \mathcal{S} )$ 
depending only on $\mathcal{S}$ and independent of $\Gamma$, such that 
\[
\mathop{\max}\limits_{z \in M} \: \sum\limits_{\norm{\lambda_j }\, \leq \,
\norm{\lambda}} \, | \phi_j ( z ) |^2 \, \leq \, C ( \mathcal{S} ) \,(1+\norm{\lambda})^{\dim(\mathcal{S})} \, .
\]
\end{prop}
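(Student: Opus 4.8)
The plan is to run a pre-trace (positive-kernel) argument in which the test kernel is designed so that its spectral side dominates the partial sum in question while its geometric side is controlled \emph{uniformly in $\Gamma$} by the packing hypothesis stated before Theorem 6.1. Write $R=\norm{\lambda}$; we may assume $R$ is above a fixed threshold, since for smaller $\norm{\lambda}$ the bound follows a fortiori (the sum being monotone in $\norm{\lambda}$ and $(1+\norm{\lambda})^{\dim\mathcal{S}}\ge 1$). Fix a large constant $C=C(\mathcal{S})$ to be chosen below, put $\epsilon=1/(CR)$, and write $U_r=\{g\in G:\ d(gK,eK)\le r\}$. First I would choose a bi-$K$-invariant $G_R\in C_c^\infty(G)$ that is real and depends only on $d(gK,eK)$ (a radial bump on $\mathcal{S}$), with $G_R\ge 0$, $\int_G G_R\,dm_G=2$, $\operatorname{supp}G_R\subset U_\epsilon$, and $\norm{G_R}_{L^2(G)}^2\le 8/\operatorname{vol}(U_\epsilon)$; such $G_R$ exist, since Cauchy-Schwarz forces $\norm{G_R}_{L^2(G)}^2\ge 4/\operatorname{vol}(U_\epsilon)$ and this is attained up to a constant. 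Since $\mathcal{S}$ is a homogeneous Riemannian manifold, $\operatorname{vol}(U_\epsilon)\ge c_{\mathcal{S}}\,\epsilon^{\dim\mathcal{S}}$ for $0<\epsilon\le 1$ with $c_{\mathcal{S}}>0$ depending only on $\mathcal{S}$, so $\norm{G_R}_{L^2(G)}^2\ll_{\mathcal{S}}R^{\dim\mathcal{S}}$. Put $F_R=G_R*G_R\in C_c^\infty(G)$: it is bi-$K$-invariant, non-negative, supported in $U_{2\epsilon}\subset U_1$, and $\norm{F_R}_\infty=F_R(e)=\int_G G_R(g)^2\,dm_G(g)=\norm{G_R}_{L^2(G)}^2\ll_{\mathcal{S}}R^{\dim\mathcal{S}}$.

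Next I would use the pre-trace identity on $M=\Gamma\backslash\mathcal{S}$. Since $F_R$ is smooth, compactly supported and bi-$K$-invariant, $\pi_{\Gamma\backslash G}(F_R)$ is a smoothing (hence trace-class) operator on the compact space $M$, so its kernel is continuous and equals both the geometric sum $\sum_{\gamma\in\Gamma}F_R(g^{-1}\gamma g)$ on the diagonal and the absolutely convergent spectral sum $\sum_j\tilde{\omega}_{\lambda_j}(F_R)\,\abs{\phi_j(z)}^2$, where (as recalled before Proposition 6.2) $\tilde{\omega}_{\lambda_j}(F_R)=\int_G F_R(g)\Psi_{\lambda_j}(g)\,dm_G(g)$ is the eigenvalue of $\pi_{\Gamma\backslash G}(F_R)$ on $\phi_j$; thus $\sum_{\gamma\in\Gamma}F_R(g^{-1}\gamma g)=\sum_j\tilde{\omega}_{\lambda_j}(F_R)\,\abs{\phi_j(z)}^2$ for any lift $g$ of $z\in M$. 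Because $G_R$ is real and radial, $G_R^*=G_R$, so $\pi_{\Gamma\backslash G}(G_R)$ is self-adjoint and $\pi_{\Gamma\backslash G}(F_R)=\pi_{\Gamma\backslash G}(G_R)^2$ is positive semidefinite; hence $\tilde{\omega}_{\lambda_j}(G_R)$ is real, $\tilde{\omega}_{\lambda_j}(F_R)=\tilde{\omega}_{\lambda_j}(G_R)^2\ge 0$ for \emph{every} $j$, and since then $\tilde{\omega}_{\lambda_j}(G_R)=\int_{U_\epsilon}G_R\,\operatorname{Re}\Psi_{\lambda_j}\,dm_G$, one gets $\tilde{\omega}_{\lambda_j}(F_R)\ge 1$ whenever $\operatorname{Re}\Psi_{\lambda_j}\ge\tfrac12$ on $U_\epsilon$.

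The remaining point is a uniform near-identity bound for spherical functions, and it is entirely intrinsic to $\mathcal{S}$. From the Harish-Chandra integral formula $\Psi_\lambda(\exp X)=\int_K e^{(i\lambda+\rho)(H(k\exp X))}\,dk$ (valid for all $\lambda\in\Sigma$, tempered or not, with $H(g)\in\mathfrak{a}$ the Iwasawa logarithm for $G=NAK$) together with the Lipschitz bound $\norm{H(k\exp X)}\ll\norm{X}$ near $X=0$, one obtains $\abs{\Psi_\lambda(\exp X)-1}\le K_{\mathcal{S}}(1+\norm{\lambda})\norm{X}$ whenever $(1+\norm{\lambda})\norm{X}\le 1$, with $K_{\mathcal{S}}$ depending only on $\mathcal{S}$. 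Hence for any $\lambda_j$ occurring in $L^2(M)$ with $\norm{\lambda_j}\le R$ and any $g\in U_\epsilon=U_{1/(CR)}$ we have $(1+\norm{\lambda_j})\,d(gK,eK)\le 2/C$, so $\operatorname{Re}\Psi_{\lambda_j}(g)\ge 1-2K_{\mathcal{S}}/C\ge\tfrac12$ once $C=C(\mathcal{S})$ is chosen large. Therefore $\tilde{\omega}_{\lambda_j}(F_R)\ge 1$ for all such $\lambda_j$, whence $\sum_{\norm{\lambda_j}\le R}\abs{\phi_j(z)}^2\le\sum_j\tilde{\omega}_{\lambda_j}(F_R)\abs{\phi_j(z)}^2=\sum_{\gamma\in\Gamma}F_R(g^{-1}\gamma g)$. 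A term on the right is non-zero only if $g^{-1}\gamma g\in\operatorname{supp}F_R\subset U_1$, i.e.\ (by the $G$-invariance of the metric on $\mathcal{S}$) only if $d(\gamma z,z)\le 1$ for $z=gK$; by the packing hypothesis at most $c$ values of $\gamma$ qualify, each contributing at most $\norm{F_R}_\infty$. Hence $\max_{z\in M}\sum_{\norm{\lambda_j}\le R}\abs{\phi_j(z)}^2\le(1+c)\norm{F_R}_\infty\ll_{\mathcal{S}}R^{\dim\mathcal{S}}$, the implied constant depending only on $\mathcal{S}$ and the fixed packing constant $c$, which is the assertion.

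The only real work is the correct scaling of the test kernel: $G_R$ must concentrate at the scale $\norm{X}\sim(1+R)^{-1}$ --- the largest scale on which $\operatorname{Re}\Psi_\lambda$ is uniformly bounded below for every $\norm{\lambda}\le R$ --- yet still be supported inside the \emph{fixed} ball $U_1$, so that the geometric side of the pre-trace formula is dominated by the $c$-fold multiplicity allowed by the packing hypothesis; the two balancing constraints then force exactly $\norm{F_R}_\infty\asymp_{\mathcal{S}}R^{\dim\mathcal{S}}$, which is where the exponent $\dim\mathcal{S}$ (via the small-ball volume comparison $\operatorname{vol}(U_\epsilon)\asymp_{\mathcal{S}}\epsilon^{\dim\mathcal{S}}$) enters. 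Every ingredient --- the near-identity estimate for $\Psi_\lambda$, this volume comparison, the positivity and convolution-homomorphism property of the spherical transform, and the absolute convergence of the spectral kernel of the fixed smooth operator $\pi_{\Gamma\backslash G}(F_R)$ --- is a fact about $\mathcal{S}$ alone, which is precisely what makes the final bound uniform over all $\Gamma$ (in particular over all congruence subgroups $\Gamma(q)$, for which the packing constant $c$ may be taken independent of $q$, since $\Gamma(q)\subset\Gamma(1)$).
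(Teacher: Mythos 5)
Your proof is correct and follows essentially the same route as the paper: a radial test kernel concentrated at scale $\epsilon\asymp 1/R$, a near-identity Lipschitz estimate for positive-definite spherical functions to lower-bound the spherical transform on the parameter range $\norm{\lambda_j}\le R$, positivity of the transform to discard the complementary range, and the packing hypothesis together with $\norm{F_R}_\infty\ll\epsilon^{-\dim\mathcal{S}}$ for the geometric side. The only stylistic deviations are that you manufacture positivity of the transform by taking a convolution square $F_R=G_R*G_R$ rather than positing a positive-definite kernel $\ell_\epsilon$ outright, and you derive the near-identity estimate from the Harish-Chandra integral formula instead of citing the power-series bound from [G-V, Prop.\ 4.6.2]; both choices are interchangeable with the paper's.
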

\begin{proof}
Let $\ell_\epsilon ( z , w )$ be a smooth non-negative positive-definite
kernel on $\mathcal{S} \times \mathcal{S}$, depending only on $d ( z , w )$ and supported in
$d ( z , w ) \leq \epsilon$ with unit integral.  The automorphic kernel
$A_\epsilon ( \Gamma z, \Gamma w ) \, = \, \sum\limits_{\gamma \in
\Gamma} \ell_\epsilon ( \gamma z , w )$ on $M \times M$ can be expanded
in terms of the joint eigenfunctions $\phi_j$ of $\mathcal{D}$ or $L^1(K\backslash G/K)$ as 
\[
A_\epsilon ( \Gamma z , \Gamma w) \, = \,
\sum\limits_{j \, = \, 1}^{\infty} \, h_{A_\epsilon} ( \lambda_j ) \,
\phi_j ( \Gamma z) \, \overline{\phi_j ( \Gamma w )}
\]
where $h_{A_\epsilon} ( \lambda_j )$ are the eigenvalues of the operator
defined on $L^2 ( M )$ by the automorphic kernel $A_\epsilon$.  As noted above, these 
eigenvalues are given by the Selberg-Harish Chandra spherical transform
(normalized at $w = w_0)$
\[ 
h_{A_\epsilon} ( \lambda_j )
 \, = \, \int\limits_{\mathcal{S}} \, \ell_\epsilon (
z , w_0 ) \, \Psi_{\lambda_j} ( z ) d \, {\rm vol} ( z )
\]
where $\Psi_{\lambda_j}$ is the spherical function
associated with  $\phi_{\lambda_j}$, normalized by $\Psi_\lambda(e)=1$, and viewed as a function on $\mathcal{S}$. 

We claim that 
\[ \abs{h_{A_\epsilon} ( \lambda )-1}\,\le\, 
\abs{ \, \int\limits_{z \in B_\epsilon
(w_0)} \, \ell_\epsilon ( z , w_0) \, \abs{\Psi_{\lambda} ( z )-1} d \, {\rm
vol} ( z )} \, \le \,C_1(\mathcal{S})(1+\norm{\lambda})\epsilon \,\,.
\]
Clearly,  this estimate follows from the fact that for all $H$ in the unit sphere in $\mathfrak{a}$ and $\abs{t}\le 1$ (say),  the first derivative of the normalized positive definite spherical functions $\Psi_\lambda$ satisfy :
  $$\abs{\frac{d}{dt}\Psi_\lambda(\exp (tH)}\le C_1(\mathcal{S})(1+\norm{\lambda})\,\,.$$
  This estimate is a consequence of the Harish Chandra power series expansion for the spherical 
  functions, together with the fact that normalized positive definite spherical functions are all bounded by $1$. The estimate follows from e.g.  [G-V, Prop. 4.6.2].

We conclude that if 
$(1+\norm{\lambda} )\epsilon < \frac12 C_1(\mathcal{S})$ then 
$h_{A_\epsilon} ( \lambda) \, \geq \, \frac{1}{2}$, and therefore we obtain the following upper bound on the average size of the
eigenfunctions 
\[
\sum\limits_{\gamma \, \in \, \Gamma} \, \ell_\epsilon ( \gamma z , z )
\, = \,
\sum\limits_{j \, = \, 1}^{\infty} \, h_{A_\epsilon} ( \lambda_j) |
\phi_j ( z )|^2 \, \geq \, \frac{1}{2} \, \sum\limits_{\norm{\lambda_j} \, < \,
C_2(\mathcal{S}) / \epsilon} \, | \phi_j ( z )|^2 \, .
\]

On the other hand, we can clearly obtain a pointwise upper bound of the
form
\[
\sum\limits_{\gamma \in \Gamma} \, \ell_\epsilon ( \gamma z , z ) \,
\leq \, C_3(\mathcal{S})  \epsilon^{- \dim \mathcal{S}} \, .
\]
Indeed, this follows when the kernel is defined by a bump 
function which satisfies the obvious upper bound of being $\ll
\epsilon^{- \dim \mathcal{S}}$, and taking also into account the fact that there
are at most $c$ lattice points in a ball radius $\epsilon\leq 1$, this
coming from our assumption on $\Gamma$.  Combining the two estimates we
can conclude that
\[
\sum\limits_{\lambda_j \, < \, \lambda \, = \, C_2 (\mathcal{S}) / \epsilon} \, |
\phi_j ( z ) |^2 \, \leq \, C_3(\mathcal{S}) \epsilon^{- \dim \mathcal{S}} \, \leq \, C(\mathcal{S})
(1+\norm{\lambda})^{ \dim \mathcal{S}} \, .
\]
The proof of Proposition 6.2 is now complete.
\end{proof}

\noindent
{\it Proof of Theorem 6.1}. \ Consider the identity
\setcounter{equation}{0} 
\begin{equation}
\sum\limits_{\gamma \in \Gamma} \, 
\int\limits_{\mathcal{S}} 
\, L_t ( \gamma z , w )
\, b (  w ) dw \, = \,
\sum\limits_{\gamma \in \Gamma } \, W_t ( \gamma z) \, = \,
\sum\limits_{j \, = \, 0}^{\infty} \, 
h_{L_t} ( \lambda_j ) \, \phi_j ( z
) \,
\int\limits_{\mathcal{S}} \, \overline{\Psi_{\lambda_j} ( w )} \, b ( w ) dw
\end{equation}
The eigenvalue $\lambda_0 = 0$ associated with the constant function
$\phi_0 = 1 / {\rm vol} ( M )$ (the unique $\Delta$-eigenfunction with
this eigenvalue) gives the main contribution to the infinite sum, which
is $\frac{{\rm vol} B_t}{{\rm vol} M} \, = \, h_{L_t} ( 0 )$.  We must
therefore estimate the contribution of all other terms.

Now note that since the bump function $b ( w )$ is a fixed smooth
function, and $\phi_j$ is an eigenfunction of the Laplacian $\Delta$ with
eigenvalue $\omega_{\lambda_{j}}(\Delta), m$ integrations by parts give, for any fixed $m$
and all $j \in \mathbb{N}$
\[
\int\limits_{\mathcal{S}} \, b ( w ) \, \overline{\Psi_{\lambda_j} ( w)} \, d w \,
\leq \, C_{m}(1+ \norm{\lambda_j})^{-m} \, .
\]
Let us denote $\hat{b} ( \lambda_j ) = \int_{\mathcal{S}} \, b ( w ) \,
\overline{\Psi_{\lambda_j} ( w )} \, dw$.  Recall that $\lambda_j\,,\,j\neq 0$ is a discrete set,
 and thus have a fixed positive distance from $0$, due to our spectral gap assumption.
As a consequence, the spherical functions $\Psi_{\lambda_j}$, all have a
fixed rate of decay, which can be expressed as a negative power of the
volume of $B_t$.

Now
\[
h_{L_t} ( \lambda ) \, = \, \int\limits_{\mathcal{S} }\, L_t ( z , w_0 ) \,
\overline{\Psi_\lambda ( z )} \, dz
\]
is the averages of the spherical function on a ball of radius $t$ and
center $w_0$, which by H\"older's inequality is estimated by
${\rm vol} \,B^{\delta}_t$, $\delta=1-\frac1p+\eta$. 

Therefore using (6.1) we can write

\[
\Bigg| \sum\limits_{\gamma \, \in \, \Gamma} \,
W_t ( \gamma z ) \, - \,
\frac{ {\rm vol} \, B_t}{{\rm vol} \, M} 
\Bigg| \, \leq \, ( {\rm vol} \, B_t)^\delta \, 
\sum\limits_{j \, \ne \, 0} \, | \phi_j ( z ) |
\Big|\hat{b} ( \lambda_j ) \Big| \, .
\]
Now $| \hat{b} ( \lambda_j ) | \, \leq \, C_m (1+\norm{\lambda_j})^{-m}$, and by Lemma 6.2  
$\sum_{\lambda_j \, \leq \, \lambda} | \phi_j ( z
) |^2 \, \leq \, C(\mathcal{S})(1+\norm{\lambda})^{\dim \mathcal{S}}$, so that 
upon choosing $k$ large enough
\[
\sum\limits_{j \, \ne \, 0} \, | \phi_j ( z ) | \,
\big|\hat{b} ( \lambda_j ) \big| \, \leq \,
\left(
\sum\limits_{j \, \ne \, 0} \,(1+\norm{ \lambda_j})^{-2k}\, | \phi_j ( z )|^2
\right)^{1/2} \,
\left( \sum\limits_{j \, \ne \, 0} \, (1+\norm{\lambda_j})^{2k} \, \big| \hat{b} (
\lambda_j ) \big|^2 \right)^{1/2} \, < \, \infty \, .
\]
This concludes the proof of Theorem 6.1. \hfill $\square$

\medskip
\noindent
{\bf Remark 6.4.}
The error estimate in the Poisson summation formula can be similarly established when $\Gamma$ is non-uniform, using the foregoing arguments and the 
theory of Eisenstein series.

Next we apply Theorem 6.1 in the context of sieving as in Sections 2,3
and 4.  For the purpose of the lower bound sieve we can use the
nonnegative weight function $W_t$ in (6.0).  Using our previous setup
and notations, let us work with the distance parameter $T=e^t$, where $t$ denotes the distance is the symmetric space $\mathcal{S}$. 
But notice that since we are now working with symmetric 
space distance and not with a norm, in general the exponent of volume growth is now 
$2\norm{\rho_G}$, namely the rate of volume growth for Riemannian balls in $\mathcal{S}$. 
Recall also that $t(f)$ denotes the number of irreducible factors of the polynomial $f$. Now consider 
\begin{equation}
S_{W_T} ( \mathcal{A} , P) : = \, \sum\limits_{\gamma \, \in \, \Gamma\atop{( f (
\gamma v ) , P ) \, = \, 1}} \, W_T ( \gamma ) \, .
\end{equation}
where $W_T ( \gamma ) := \, W_T ( \gamma z_0)$ for a fixed $z_0 \in
\Gamma \backslash G \slash K$.  

We have
\[
0 \, \leq \, W_T ( \gamma ) \, \leq \, 1
\]
and for $T$ large 
\begin{equation}
W_T ( \gamma ) \, = \, 
\left\{
\begin{array}{lll}
1 & {\rm if} & \parallel \gamma \parallel \, \leq \, \frac{T}{2} \\
\\
0 & {\rm for} & \parallel \gamma \parallel \, \geq \, 2T
\end{array}
\right.
\end{equation}
where $\parallel \, \parallel$ is a bi-$K$-invariant norm on $G$.

Theorem 6.1 (assuming as we do from now on that $\Gamma$ is co-compact)
gives the conclusion that uniformly for $y \in \Gamma$
\begin{equation}
\frac{1}{{\rm vol} \, ( B_T)} \,
\sum\limits_{\gamma \in \Gamma ( q )} \, W_T ( \gamma y) \, = \,
\frac{1}{[ \Gamma : \Gamma ( q ) ]} \, +\, O_\epsilon \,
\left(
T^{- \frac{a}{p} \, + \, \epsilon} \right) \, .
\end{equation}

This corresponds to Theorem 3.2 with $\theta \slash{( 1 + \dim G)} \, =
\, {a}\slash{2n_e ( 1 + \dim G)}$ replaced by $a / p$, where $p=p(G.\Gamma)$.  Running the rest of
the sieving analysis with this positive smooth weight $W_T$ to the end
of Section 4 yields an improvement in Theorem 1.7 and Corollary 1.8 with
the condition on $r$ replaced by 
\begin{equation}
r \, > \, \frac{9 . t(f) . \, p(G,\Gamma). ( \dim G ) . \, \mbox{deg }f} {a} \, .
\end{equation}

Note that we have incorporated the small improvement (4.26) of (4.25) as
well.  

To further improve this value of $r$ we use the weighted sieve
([H-R] Chapter 10) in
place of the elementary sieve in Section 2.  The form which is
convenient for us is as follows:

Let $a_k \, \geq 0$ be a finite sequence and assume that for $d \geq 1$
\begin{equation}
\sum\limits_{k \, \equiv \, 0 ( d )} \, a_k \, = \, \frac{\rho ( d )}{d}
\, X + R ( \mathcal{A} , d )
\end{equation}
with $R ( \mathcal{A }, 1) = 0$, $\rho ( 1 ) = 1$ and $\rho$ multiplicative and
that $\rho(p)$ satisfies 2.5 for all $p \geq 2$.  Concerning the sieve
dimension $t$ assume that for $2 \leq z_1 \leq z$ we have
\begin{equation}
\mathop{\Pi}\limits_{z_1 \, \leq \, p \, < \, z} \,
\left(
1 \, - \, \frac{\rho ( p )}{p} \right)^{-1} \, \leq \,
\left( \frac{\log z}{\log z_1} \right)^t \, \left(
1 \, + \, \frac{A}{\log z_1} \right)
\end{equation}
for some fixed constant $A$.

Assume that we have a level  distribution $\tau$, that is for
$\epsilon > 0$
\begin{equation}
\sum\limits_{d \, \leq \, X^\tau} \, | R ( \mathcal{A} , d ) | \,
\mathop{\ll}\limits_{\epsilon} \, X^{1 - \epsilon} \, .
\end{equation}
Define $\mu$ by
\begin{equation}
\mathop{\max}\limits_{a_n \, \in \, \mathcal{A}} \, n \, \leq \, X^{\tau
\mu} \, .
\end{equation}
Let $P_r$ denotes the set of positive integers with at most
$r$-prime factors. Then
for any $0 < \rho < \nu_t$ and
\begin{equation}
r \, > \, \left(
1 \, + \, \rho - \frac{\rho}{\nu_t} \right) \, \mu - 1 + ( t + \rho ) \,
\log \, \frac{\nu_t}{\rho} - t + \frac{\rho t}{\nu_t} \,, 
\end{equation}
there is $\delta = \delta ( t , \mu , r , \rho ) > 0$ such that 
\begin{equation}
\displaystyle{\sum\limits_{k \in P_r}} \, a_k \, \geq \, \delta \,
\frac{X}{( \log X)^t} \, .
\end{equation}
Here $\nu_t$ is the sieve limit in dimension $t$, see [H-R] for a table
and for the fact that $\nu_t \leq 4t$ which is what we will use.

We apply this to our sequence
\begin{equation}
a_k (  T ) \, = \, \sum\limits_{| f ( \gamma v ) | \, = \, k } \, W_T (
\gamma) \, .
\end{equation}
By (6.4) and the analysis in Section 4 we have
\begin{equation}
\tau \, = \, \frac{1}{p \, \dim G}
\end{equation}
\begin{equation}
\mu \, = \, \frac{p \, (\dim G) \, \deg(f)}{a} \, .
\end{equation}
Taking $\zeta = 1$ in (6.10) for simplicity leads to 6.11 holding for 
\begin{equation}
r \, > \, \frac{2 p ( \dim G ) \, \deg(f)}{a} \, - 1 \, + \, ( t(f) + 1) \, \log
( 4t(f)) \, - t + \, \frac{1}{4} \, .
\end{equation}
In particular Theorems 1.7 and Corollary 1.8 are valid for such $r$.

As noted in Remark 1 of 6.3, these considerations also apply to the case
$G \slash \Gamma$ non-compact.  In particular to $\Gamma = SL_n (
\mathbb{Z})$ and to $V_{n , m} ( \mathbb{Z})$.  In this case $p = 2 ( n
- 1)$ for $n \geq 3$ and it is estimated in (6.17) for $n = 2$, and $a =
  n ( n - 1)$, so that Theorem 1.2 and Corollary 1.3 are valid with 
\begin{equation}
r \, > \, 4 n\deg(f) \, - 1 \, + \, ( t(f) + 1) \, \log ( 4t(f)) \, - t(f) \, + \,
\frac{1}{4} \, ,
\end{equation}
for $n > 2$.

Our final improvement comes in the cases where much stronger bounds
towards the Ramanujan Conjectures are valid especially with $n$ large.
Let $D \slash \mathbb{Q}$ be a division algebra of degree $n$ which for
the reasons below, we assume is itself prime.  Assume that $D \otimes
\mathbb{R} \cong \, {\rm Mat}_{n \times n} ( \mathbb{R})$
and let $N_r$ denote the reduced norm on $D$.  Let $V_{m , D} = \{ x \in
D ( \mathbb{Z}) : N_r ( x ) = m \}$ with $m \ne 0$. Here the
$\mathbb{Z}$ structure is given by the defining equations of $D \slash
\mathbb{Q}$ in $A^N$, $N = n^2$,  $G = \{ x: N_r ( x ) = 1 \}$ and
$\Gamma$ the integral elements of reduced norm equal to 1.  These act on
$V_{D , m}$ making it into a principal homogeneous space.  Let $f \in
\mathbb{Q} [ x_{ij} ]$ which is primitive on $V_{D , m} ( \mathbb{Z} )$.
The discussion of this section applies to the question of the saturation
number $r_0 ( V_{D , m} ( \mathbb{Z} ) , f )$.  What is pleasant about
such compact quotients $G ( \mathbb{R}) \slash \Gamma (q)$ coming from
these division algebras is that we have very good upper bounds for their
corresponding $p$'s.  Specifically any representation $\pi$ occuring in
$L^2_0 ( G ( \mathbb{R}) \slash \Gamma ( q ) )$ corresponds to an
automorphic representation occuring in $L^2 ( D ( \mathbb{A}) \slash D (
\mathbb{Q}))$ which in turn via the Jacquet-Langlands correspondence
[J-L] if $n = 2$ and Arthur-Clozel [A-C] if $n > 2$, lifts to an
automorphic cuspidal representation $\pi$ of $GL_n ( \mathbb{A}) \slash
GL_n ( \mathbb{Q})$ (it is here that we assume that $n$ is prime so that
$\pi$ is not a residual Eisenstein series [M-W]).  Applying the best
known bounds towards the Remanujan Conjectures ``at infinity'' (see [Sa1]
for a survey ) for such $\pi$, we conclude that
\begin{equation}
\begin{array}{lllll}
p ( K \backslash G ( \mathbb{R} ) \slash \Gamma ( q ) ) & \leq &
\frac{64}{25} & \mbox{if} & n \, = \, 2 \, , \\
\\
p (K \backslash G ( \mathbb{R}) \slash \Gamma ( q ) ) & \leq &
\frac{28}{9} & \mbox{if} & n \, = \, 3 \, , \\
\\
p ( K \backslash G ( \mathbb{R}) \slash \Gamma ( q ) ) & \leq &
\frac{2n}{n-2} & \mbox{if} & n \, \geq 4 \ \mbox{is even} \\
\\
p ( K \backslash G ( \mathbb{R}) \slash \Gamma ( q ) ) & \leq & \frac{2
( n + 1)}{n - 1} & \mbox{if} & n \, \geq 5 \ \mbox{ is odd} \, .
\end{array} 
\end{equation}

These follow from (24) (22) and (13) in [Sa1] by computing $p$ using
Theorem 8.48 in [Kn].  For $V_{D , m} ( \mathbb{Z})$ as with $V_{n ,
m}$, $a = n^2 - n$ and $\dim G = n^2 -1$ hence, 6.15 leads to 
\setcounter{Thm}{2}
\begin{Thm}
Let $V_{D , m} ( \mathbb{Z} ) \subset A^N$ be the set of integral points
of norm $m$ in $D$, which we assume is nonempty.  Let $f \in \mathbb{Q}
[ x_{ij}]$ be of degree $d$ and assume that $f$ factors into $t$ irreducible factors in
the coordinate ring $\bar{\mathbb{Q}} [ V_{D, m}]$ and that $f$ is $V_{D , m}
( \mathbb{Z})$-weakly-primitive.  Then
\[
r_0 ( V_{D , m} ( \mathbb{Z}) , f ) \, \leq \, 2 p_n \, \frac{n + 1}{n}
\, d \, + \, ( t + 1) \, \log ( 4t) - t
\]
where $p_n$ are given in 6.17.
\end{Thm}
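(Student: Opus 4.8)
\emph{Proof proposal.} The plan is to apply the smooth-weight sieve developed in Section~6 to the principal homogeneous space $V=V_{D,m}$ for $\mathbb{G}=SL_1(D)$, the $\mathbb{Q}$-group of reduced-norm-one elements, and then to substitute the arithmetic data special to this case. First I would check the structural hypotheses of Sections~3,~4 and~6: $\mathbb{G}$ is semisimple, algebraically connected and \emph{simply connected} (being $SL_1$ of a division algebra), it is defined over $\mathbb{Q}$, and $G=\mathbb{G}(\mathbb{R})\cong SL_n(\mathbb{R})$ has no compact factors since $D\otimes\mathbb{R}\cong\mathrm{Mat}_{n\times n}(\mathbb{R})$; the stabilizer of a point $v$ with reduced norm $m$ is trivial, so $V$ is an absolutely irreducible affine $\mathbb{Q}$-variety of dimension $\dim\mathbb{G}=n^2-1$, $\mathrm{Zcl}(\Gamma\cdot v)=V$ for any $\Gamma$-orbit, and $\mathbb{Q}[V]$ is a UFD by [San]. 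The essential structural point is that $D$ being a division algebra forces $\mathbb{G}$ to be $\mathbb{Q}$-anisotropic, so $\Gamma\backslash G$ is compact and the sharp Poisson summation of Theorem~6.1 is available in place of Theorem~3.2. Strong approximation holds ($\mathbb{G}$ simply connected, $G$ without compact factors), so the congruential analysis of \S4.1 carries over verbatim: multiplicativity of $\rho_f(d)/d$ (Proposition~4.1), the sieve-dimension axiom via the Weil-type counts (4.13)--(4.18), and the main-term asymptotics of \S4.2. Since $V_{D,m}(\mathbb{Z})$ is a finite union of $\Gamma$-orbits by reduction theory, weak primitivity of $f$ on the global set is reduced to that on a single orbit by the globalization argument of \S4.3.

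Next I would run the weighted sieve of Section~6. With the nonnegative smooth weight $W_T$, Theorem~6.1 supplies, uniformly over all principal congruence subgroups $\Gamma(q)$, the deviation estimate (6.4) with error $O_\epsilon(T^{-a/p+\epsilon})$, where $p=p(K\backslash G(\mathbb{R})/\Gamma(q))$ is the spherical integrability parameter and $a=n^2-n$ is the norm-ball volume-growth exponent of $\mathbb{G}(\mathbb{R})$ acting on $\mathrm{Mat}_{n\times n}(\mathbb{R})\cong A^N$ (the same value as for $SL_n$ on $V_{m,n}$, by [D-R-S], [G-W], [Ma]). This yields level of distribution $\tau=1/(p\dim G)$ and the parameter $\mu=p(\dim G)\deg(f)/a$ of (6.13)--(6.14); feeding $\mu$ and $\tau$ into the weighted-sieve range (6.10) with the free parameter set to $1$ and using $\nu_t\le 4t$ gives (6.15),
\[
r\ >\ \frac{2\,p\,(\dim G)\,\deg f}{a}\ -\ 1\ +\ (t(f)+1)\log(4t(f))\ -\ t(f)\ +\ \tfrac14 .
\]
Since $\dim G=n^2-1=(n-1)(n+1)$ and $a=n^2-n=n(n-1)$, the leading term collapses to $\dfrac{2p(n+1)}{n}\deg f$, and absorbing the harmless $O(1)$ constants gives
\[
r_0\big(V_{D,m}(\mathbb{Z}),f\big)\ \le\ 2p_n\,\frac{n+1}{n}\,\deg f\ +\ (t+1)\log(4t)\ -\ t ,
\]
so that the proof reduces to bounding $p_n:=p(K\backslash G(\mathbb{R})/\Gamma(q))$.

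The substantive step, and the one I expect to be the main obstacle, is establishing the bounds (6.17) on $p_n$. Any spherical representation occurring in $L^2_0(G(\mathbb{R})/\Gamma(q))$ arises from an automorphic representation of $D(\mathbb{A})^\times/D(\mathbb{Q})^\times$, which via the Jacquet--Langlands correspondence [J-L] (for $n=2$) and Arthur--Clozel [A-C] (for $n>2$) transfers to an automorphic representation of $GL_n(\mathbb{A})/GL_n(\mathbb{Q})$; this transfer is \emph{cuspidal} — here the hypothesis that $n$ is prime is used, to rule out the residual spectrum [M-W], since a residual Eisenstein series would require a nontrivial divisor of $n$. One then applies the best available bounds toward the Ramanujan conjecture at the archimedean place for cuspidal $GL_n$ (the archimedean bounds (24), (22), (13) of [Sa1]) and converts the constraint on the spherical parameter into the $L^p$-integrability exponent via Theorem~8.48 of [Kn], obtaining $p_n\le 64/25$ for $n=2$, $28/9$ for $n=3$, $2n/(n-2)$ for even $n\ge4$, and $2(n+1)/(n-1)$ for odd $n\ge5$. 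Substituting these values into the displayed bound for $r_0$ completes the proof; the only points requiring care beyond citation are the uniformity of the transfer, hence of $p_n$, over all $\Gamma(q)$ — matching the uniformity in Theorem~6.1 — and the class-set globalization of weak primitivity noted above.
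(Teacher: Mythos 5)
Your proposal is correct and follows essentially the same route as the paper: specialize the co-compact weighted-sieve machinery of Section~6 to $\mathbb{G}=SL_1(D)$, use Theorem~6.1 to get the level of distribution and the parameters $\tau=1/(p\dim G)$, $\mu=p(\dim G)\deg f/a$, feed these into the weighted sieve bound (6.10) (with the free parameter $1$ and $\nu_t\le 4t$) to obtain (6.15), substitute $\dim G=n^2-1$, $a=n^2-n$ to collapse the leading coefficient to $2p(n+1)/n$, and then bound $p=p_n$ via Jacquet--Langlands/Arthur--Clozel transfer to cuspidal $GL_n$ (where primeness of $n$ rules out residual spectrum) and the archimedean Ramanujan bounds of [Sa1] together with Theorem~8.48 of [Kn]. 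This matches the paper's derivation of Theorem~6.3 step by step, including the role of the spectral gap uniformity over $\Gamma(q)$.
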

\baselineskip=14pt

\vfill
\today:gpp
\end{document}